\documentclass[a4paper]{amsart}

\usepackage{amsmath} 
\usepackage{amssymb}
\usepackage{amsfonts}
\usepackage{tikz}

\usepackage[latin1]{inputenc}
\usepackage{latexsym,wasysym}
\usepackage{comment}

\usepackage{graphicx}

\usepackage{pdflscape}

\usepackage{stmaryrd}   

\usepackage{enumerate}  
\usepackage{mathtools}  
\usepackage{bm}         
\usepackage{url}        

\newtheorem{theorem}{Theorem}[section]
\newtheorem{lemma}[theorem]{Lemma}
\newtheorem{proposition}[theorem]{Proposition}

\newtheorem{corollary}[theorem]{Corollary}

\theoremstyle{definition}
\newtheorem{definition}[theorem]{Definition}

\newtheorem{example}[theorem]{Example}

\newtheorem{remark}[theorem]{Remark}

\newcommand{\la}{\langle}
\newcommand{\ra}{\rangle}

\newcommand{\Al}[1][A]{\ensuremath{\mathbf{#1}} }

\newcommand{\bu}{\bullet}
\newcommand{\cb}{\circ_\mathsf{B}}
\newcommand{\ce}{\circ_\mathsf{E}}

\newcommand{\A}{\Al}

\newcommand{\QNmin}{\mathsf{QN}^{\bu}}

\newcommand{\QNminb}{\QNmin_{\mathsf{mB}}}
\newcommand{\QNbmin}{\QNmin_{\mathsf{Bm}}}

\newcommand{\PQNA}{\mathsf{PQN}}

\newcommand{\PQNmin}{\mathsf{PQN}^{\bu}}
\newcommand{\PQNminb}{\PQNmin_{\mathsf{mB}}}
\newcommand{\PQNbmin}{\PQNmin_{\mathsf{Bm}}}

\newcommand{\QNmax}{\mathsf{QN}^{\circ}}

\newcommand{\QNmaxb}{\QNmax_{\mathsf{mB}}}
\newcommand{\QNbmax}{\QNmax_{\mathsf{Bm}}}
\newcommand{\QNmaxe}{\QNmax_{\mathsf{mE}}}
\newcommand{\PQNmax}{\mathsf{PQN}^{\circ}}
\newcommand{\PQNmaxb}{\PQNmax_{\mathsf{mB}}}
\newcommand{\PQNmaxe}{\PQNmax_{\mathsf{mE}}}
\newcommand{\PQNbmax}{\PQNmax_{\mathsf{Bm}}}

\newcommand{\PQNbmaxi}{\mathsf{aiPQN}^{\circ}_{\mathsf{Bm}}}
\newcommand{\PQNmaxbi}{\mathsf{aiPQN}^{\circ}_{\mathsf{mB}}}

 \let\Omega=\varOmega

 \let\Gamma=\varGamma

 \let\Lambda=\varLambda

 \let\Sigma=\varSigma

 \let\Psi=\varPsi

 \let\Delta=\varDelta

 \let\Pi=\varPi

 \let\Theta=\varTheta

\renewcommand{\&}{\ast}

\newcommand{\nnot}{{\mathop{\sim}}}                  

\renewcommand{\nu}{\Diamond}

\let\imp\to                                

\newcommand{\alg}[1]{\mathbf{#1}}                  

 \let\equ==

\newcommand{\eval}[2][\right]{\relax
   \ifx#1\right\relax \left.\fi#2#1\rvert}

 \usepackage{wasysym}
 \usepackage{xcolor}
\usepackage{amsmath
}
\renewcommand{\phi}{\varphi}

\newcommand{\QN}{\mathsf{QN}}

\usepackage{enumerate}  
\renewcommand{\approx}{=}
\newcommand{\bb}{\bullet_\mathsf{B}}

\newcommand{\logicsf}[1]{\mathcal{ #1}}\def\d{\displaystyle}
\newcommand{\logicsfl}[1]{\mathcal{ #1}}\def\d{\displaystyle}

\newcommand{\blue}[1]{{\color{blue} #1}}
\newcommand{\red}[1]{{\color{red} #1}}

\renewcommand{\neg}{\nnot}

\newcommand{\Lf}{\ensuremath{\mathsf{L}}} %

\newcommand{\Lc}{\ensuremath{\mathcal{L}}} 

\begin{document}

\title{
Recovery operators in quasi-Nelson logic: \\ the prelinear case}

\author{
Tommaso Flaminio, Lluis Godo}
\address{IIIA-CSIC, Campus de la UAB, 08193 Bellaterra, Spain}
\email{\{tommaso, godo\}@iiia.csic.es}

\author{
 Umberto Rivieccio}
\address{Departamento de L\'ogica, Historia y Filosof\'ia de la Ciencia,  
UNED,
Madrid, Spain}
\email{umberto@fsof.uned.es}
\thanks{Tommaso Flaminio acknowledges partial support from the 2023-PUNED-0052
grant ``Investigadores tempranos UNED-SANTANDER'', the Spanish project SHORE (PID2022-141529NB-C22) funded by MCIN/AEI/10.13039/501100011033, and the H2020-MSCA-RISE-2020 project MOSAIC (Grant Agreement number 101007627).
Lluis Godo acknowledges support from the Spanish project LINEXSYS (PID2022-139835NB-C21) funded by MCIN/AEI/10.13039/501100011033.
Umberto Rivieccio acknowledges support from the 2023-PUNED-0052
grant ``Investigadores tempranos UNED-SANTANDER''
and  from the I+D+i research project PID2022-142378NB-I00 ``PHIDELO'', funded by the Ministry of Science and Innovation of Spain.}

\date{\today}




{ 



}

\begin{abstract}
This paper 
 investigates
recovery operators in 
quasi-Nelson logic, the algebraizable logical counterpart
of quasi-Nelson algebras.
These form a variety of three-potent, distributive, but not necessarily involutive
residuated lattices that may be regarded as a common generalization of
Nelson and Heyting algebras. 
We consider both consistency and
determinedness operators, with a particular focus
on logics and algebras that satisfy the prelinearity condition, which is well-known
in the area of mathematical fuzzy logics.
We show that, essentially, all algebraic and logical results already proved for (prelinear, distributive) involutive  residuated lattice-based LFIs/LFUs can be recovered 
in the quasi-Nelson setting, where one dispenses with
the involutivity assumption. In this setting, consistency and undeterminedness operators are no longer duals of one another, and hence  call for a more fine-grained algebraic and logical formalization. 
\end{abstract}

\maketitle

\section{Introduction: LFIs and LFUs}
\label{sec:intro}

\emph{Logics of formal inconsistency} (LFIs) \cite{CaCoMa,CaCo16} are among the most well-known and time-honoured 
 inconsistency-tolerant, or paraconsistent, logical systems. Formally, an LFI is usually presented as a 
{Tarskian}
(propositional) 
consequence relation $(\vdash)$ over a language 
 that may include
a conjunction $(\land)$, a disjunction $(\lor)$,
an implication $(\imp)$, truth constants $(\bot, \top)$, 
 a negation $(\nnot)$   crucially failing to satisfy the \emph{principle of explosion}
($\phi , {\nnot} \phi \vdash \bot$),  and a unary  \emph{consistency} 
operator $\circ$ (either primitive or definable from the others)
which is meant to recover a more controlled form of explosion.
This may be achieved by introducing the rule called
\emph{finite gentle principle of explosion} in~\cite[p.~50]{Taxo}:
{ 
\begin{equation}
\label{eq:fgpe}
 \phi, {\nnot} \phi, \circ \phi \vdash \bot
\end{equation}
while also requiring that there exist $\varphi$ such that
\begin{eqnarray}
\label{eq:fgpe2}
 \phi, \circ \phi \not\vdash \bot \\
\label{eq:fgpe3}  {\nnot}\phi, \circ \phi \not\vdash \bot .
\end{eqnarray}
}
The intended reading of $\circ \phi$ is ``$\phi$ is consistent'',
so~\eqref{eq:fgpe}  can be read as follows: ``if $\phi$ is consistent and contradictory, then $\phi$ explodes''.
Dually, an operator $\circ^{\delta} 
$ expressing the notion of \emph{in}consistency may be defined by
$\circ^{\delta} \phi = \nnot \circ \phi$, and the formula ``$\circ^{\delta} \phi$'' is read as ``$\phi$ is inconsistent''.


Besides 
the above-introduced white consistency operator 
and its dual, one may 
consider 
a black one ($\bu$) whose defining logical feature is the following principle: 
\begin{equation}
\label{eq:fgpd}
\vdash \phi \lor \nnot \phi \lor \bu \phi.
\end{equation}
This 
may be construed as a ``gentle excluded middle'' intended to replace the classical one 
($\vdash \phi \lor \nnot \phi$) in logics that do not satisfy this principle. 
The formula $\bu \phi$ may be read as ``$\phi$ is undetermined'' and,
 accordingly, one may paraphrase~\eqref{eq:fgpd} 
 as:
``either $\phi$ holds, or $\nnot \phi$ holds, or else $\phi$ is undetermined''. As in the preceding case, one may consider a dual connective $\bu^{\delta}$ given by
$\bu^{\delta} \phi = \nnot \bu \phi$,  reading $\bu^{\delta}  \phi$ as ``$\phi$ is determined''.

Logics that do not satisfy the classical excluded middle principle
are known as \emph{paracomplete}, and  formal systems that result from endowing a (paracomplete) logic
with an \emph{undeterminedness} connective such as $\bu$ have been called 
\emph{Logics of formal undeterminedness} (LFUs), as in~\cite{Ma05}.

Alternative principles for the black operator (that we shall not consider in the present paper)
have also appeared in the literature: for instance, one may require
$$\bu^\delta \phi \vdash \phi \lor \nnot \phi,$$
which  is indeed the defining condition for an LFU in \cite[Def.~4.5]{CaCoRo},
or  the stronger one:
$$\vdash \bu^\delta \phi \to (\phi \lor \nnot \phi).$$

A (paraconsistent and paracomplete) logic may be at the same time 
an LFI and an LFU where both 
the white and the black operator (together with their negation-duals) are simultaneously defined. 
We thus have the following 
(cf.~\cite[p.~292]{Ma05}):
\begin{align*}
\circ \quad &  \quad \text{expressing \ consistency;} \\
\circ^{\delta} = \nnot \circ \quad & \quad \text{expressing \  \emph{in}consistency;} \\
 \bu \quad & \quad \text{expressing \  \emph{un}determinedness;} \\
\bu^{\delta} = \nnot \bu \quad & \quad \text{expressing  determinedness}.
\end{align*}

{These four are collectively known as \emph{recovery operators}, or
also \emph{restoration} or \emph{perfection} operators (the terminology in the literature is not uniform,
see e.g.~\cite{CaCoRo}, and on ``perfection'' see  e.g.~\cite{GrMMR24}).

%
It may  happen, as with the logics considered in~\cite{dIRL,esteva2021}, that one operator (either the white or the black)
is sufficient for defining the remaining ones, that is, we have $\circ^{\delta} = \bu$ and $\bu^{\delta} = \circ$.
In such a case 
the four preceding notions collapse to two, with $\circ$ expressing both consistency and determinedness at the same time,
and $\bu $ expressing both inconsistency and undeterminedness. 
In the present paper 
we are going to work with a not necessarily involutive negation, suggesting that all four connectives/operators
remain distinct, though some interaction is still bound to exist (see below). 
Thus, taking both 
$\circ$ and $\bu$ 
as primitive, we shall 
consider separately the
cases where only one of them is available as well as the case where both are simultaneously present. 

The basic propositional calculi to which we shall add consistency/determinedness operators are logics arising from
 a class of residuated lattices known as \emph{quasi-Nelson algebras} (to be defined in the next section).
 In this setting, the truth-preserving (i.e., the $1$-preserving)  logics 
turn out to be explosive and, as such,
not amenable to be extended 
with a consistency operator. We shall therefore consider an alternative consequence relation canonically associated to any variety of residuated lattices, namely the \emph{degree-preserving} (or 
\emph{semilattice-based}) logic, see e.g.\ \cite{bou2009logics}. In these systems, the consequence relation 
reflects
the lattice order on the associated algebras, suggesting, firstly, 
that the rule~\eqref{eq:fgpe} translates as the equation
\begin{equation}
\label{eq:lfi}
x \land \nnot x \land \circ x = 0, \end{equation}
 and, secondly, 
 that the logic is going to be paraconsistent as long as the algebras do
not satisfy the equality $x \land \nnot x  = 0 $.
Stronger conditions naturally arise in the context of LFIs, for instance one may require
the value of 
$\circ \phi$  (which tells us whether ``$\phi$ is consistent'' or not) to be crisp, or Boolean.
These give rise to the Boolean consistency operators
$\rm{B} \!  \max $ and  $\mathsf{maxB}$, 
that we  will also discuss at length. 

The picture  changes somewhat when we consider determinedness operators. In this case, since a  residuated lattice
or quasi-Nelson algebra
need not in general satisfy the equation $x \lor \nnot x = 1$, the
associated $1$-preserving logic
turns out to be paracomplete, and may well be expanded with determinedness operators
-- of course, nothing prevents us from considering the degree-preserving logic as well.  
In either case, requiring~\eqref{eq:fgpd} will translate, on the algebraic models,
as the equality 
\begin{equation}
\label{eq:lfu}
x \lor \nnot x \lor \bu x = 1.
\end{equation}
As in the case of the white, 
we can further require the black operator to be Boolean, giving rise to what we call
$\mathsf{Bmin}$ and $\mathsf{minB}$-undeterminedness operators.

It may be interesting to note that, on algebraic models, each of the above-mentioned operators is uniquely determined
by the structure of the underlying algebra (as indeed suggested by the terms \emph{max} and \emph{min}).
This explains why non-trivial interactions arise when two different operators are simultaneously defined on the same algebra: see Section~\ref{sec:both}. 

{ 
The  paper is organized as follows. Our algebraic and logical starting point,
to  be recalled in the next section, are the variety of quasi-Nelson algebras (Subsection~\ref{intro:QN-algebras}) and quasi-Nelson logic (Subsection~\ref{prel-logics}). We shall, in particular, focus on their {\em prelinear extensions},
arising from 
those quasi-Nelson algebras that satisfy the equation
$$
(x\to y)\vee (y\to x)=1
$$
Prelinear quasi-Nelson algebras (henceforth $\mathsf{PQN}$-algebras) form a subvariety of $\mathsf{WNM}$-algebras~\cite{esteva2001monoidal} 
and have been 
studied in~\cite{rfn,FlaRi}; 
in the present paper we shall look at how recovery operators act on these 
algebras.

Undeterminedness operators on $\mathsf{PQN}$-algebras  are studied in Section~\ref{recov-undeterminded}, while consistency operators are considered in Section~\ref{prelinearity}. The latter contains one of our main results, showing that, while  semilinearity may be lost when expanding $\mathsf{PQN}$-algebras by consistency operators, it can be recovered if we postulate a quasi-equation 
forcing a suitable behaviour of the negation operator, which thus becomes ``almost involutive''.  
Although the two operators are considered separately up to Section~\ref{prelinearity}, they will be studied in the same signature for almost involutive $\mathsf{PQN}$-algebras (Section~\ref{sec:both}).  

LFIs and LFUs arising  from $\mathsf{PQN}$-algebras enriched with recovery operators
are considered in the second part of the  paper. 
Following the same 
order as in the algebraic sections,
we first deal with
logics expanded with undeterminedness connectives (Section~\ref{sec:logUndeter}) and then
with logics expanded with consistency connectives (Section~\ref{sec:logCons}). 
The latter section consists of three subsections. Subsections~\ref{sec:logCons1} and~\ref{sec:logCons2}, where we  consider the degree-preserving and the non-falsity preserving companions of prelinear quasi-Nelson  logic expanded with  recovery connectives. Lastly, in Subsection~\ref{sec:propagation}, we study how consistency and determinedness  properties propagate through propositional connectives in the case of consistency operators as well as duals of undeterminedness operators. 
The final Section~\ref{conclusions} mentions a few open problems and prospects for future work.
}

\section{Preliminaries
}
\label{sec:qnl} 

\subsection{Quasi-Nelson algebras}\label{intro:QN-algebras}
The variety of quasi-Nelson algebras ($\QN$) was introduced in~\cite{RiSp19} and further investigated in a number of subsequent publications~\cite{RiSp20,riviecciotwonegs,thiago2021negation,rivieccio2021fragments,FlaRi}.
Formally, $\QN$ can be  viewed either as a variety of 
residuated lattices or as a  generalization of  both Nelson algebras
(the algebraic counterpart of Nelson's 
logic) and Heyting algebras (see~\cite{GaJiKoOn07} for all the unexplained terminology of universal algebra, substructural logics and (residuated) lattice theory).
Taking the former approach, we may define
a quasi-Nelson algebra as a \emph{commutative, integral and bounded residuated lattice}
$\A = \la A ;  \wedge, \vee,  *, \imp, 0, 1 \ra$ that further satisfies the \emph{Nelson equation}\footnote{In the literature on (quasi-)Nelson algebras, the residuated
implication (our $\to$) is often denoted by $\Rightarrow$ in order to distinguish it from the so-called \emph{weak implication},
which is the one given, in the present notation, by the term $x^2 \to y$. In the present context, since we will not need to  employ the weak implication, we stick to the notation that is more standard in the literature on (commutative) residuated lattices.} :
\begin{equation}
\label{eq:Nelson}
\tag{Nelson}
 (x^2 \imp y) \land ((\nnot y)^2 \imp  \nnot x) \leq x \imp y 
\end{equation}
where $\nnot x : = x \imp 0$ and $x^2 = x * x$.

While~\eqref{eq:Nelson} entails 
distributivity
($x \land (y \lor z) = (x \land y) \lor (x \land z)$)
and 3-potency 
($x^3= x^2$), crucially
quasi-Nelson algebras
need not   be involutive, i.e.~they need not satisfy the double negation equation
$x =  \nnot \nnot x  $. 

The involutive 
members of $\QN$
are precisely  the Nelson algebras,
and the idempotent ones (those  satisfying $x^2 = x$) are precisely the  Heyting algebras.

\emph{Quasi-Nelson logic} $\mathcal{QN}$, the logical counterpart of $\QN$, may  be obtained by adding
the 
\emph{Nelson axiom} to $FL_{ew}$  (see~\cite{GaJiKoOn07}): 
$$
( (\phi \imp (\phi \imp \psi)) \land (\nnot \psi \imp (\nnot \psi \imp \nnot \phi)) ) \imp (\phi \imp \psi).
$$

The interest in quasi-Nelson algebras/logic is manifold, and can be motivated from a number of different perspectives 
(e.g.~constructive logics, order theory, and universal algebra; see 
the above-mentioned papers for further details).
In the present context we are mainly interested in $\QN$ as a first step in the extension of
the approach of~\cite{dIRL} (see also \cite{esteva2021}) to LFIs and LFUs beyond the involutive setting. 

Within the class of Nelson algebras (i.e., involutive quasi-Nelson algebras), the  {\em prelinearity} equation:
\begin{equation}
    (x\to y)\vee (y\to x)=1
    \tag{Pre}
\end{equation}

\noindent determines
the subvariety  generated by its totally ordered members (or {\em chains}). 
Prelinear 
Nelson algebras are otherwise known in the literature under the name of {\em nilpotent minimum algebras} ({\em $\mathsf{NM}$-algebras}), \cite{esteva2001monoidal},
and can be
equivalently defined as commutative bounded residuated lattices that are prelinear (hence they are $\mathsf{MTL}$-algebras in the language of \cite{esteva2001monoidal}), involutive, and further satisfy the equation 
$(\nnot (x\ast y) )\vee ((x\wedge y)\to (x\ast y))=1$. 

The prelinear but not necessarily involutive 
quasi-Nelson algebras 
form a variety denoted by $\PQNA$, which was  introduced and studied in~\cite{FlaRi}. This class, which is a proper subvariety of the variety of {\em weak nilpotent minimum algebras} ({\em $\mathsf{WNM}$-algebras}, see~\cite{esteva2001monoidal}), 
will play a key role in the 
present paper. 

Every prelinear subvariety of residuated lattices is generated by its totally ordered members, for the subdirectly irreducible algebras in the variety are chains
(by contrast, directly indecomposable 
prelinear residuated lattices need not be chains).
That is, prelinear subvarieties of residuated lattices are {\em semilinear}, see e.g. \cite{Horcik}. 
However,
this property 
is not necessarily preserved under language expansions;
in particular we will see (Section~\ref{prelinearity}) that, when adding a recovery operator, 
the resulting (quasi)variety may no longer be
generated by its totally ordered members. 

The following  notion will play a prominent role in the study of
LFIs based on quasi-Nelson algebras.
The set of \emph{Boolean elements  $B(\Al)$}, or complemented elements, of a bounded commutative integral residuated lattice $\Al$ may be defined as 

\begin{equation}\label{eq:boolean}
B(\A) : = \{ a \in A : a \lor \nnot a = 1\}.
\end{equation}

The set $B(\Al)$ is easily seen to be 
the universe of a sublattice of $\A$.

Another set that will play a role is the set of {\em explosive} elements of $\bf A$, defined by a condition that is, in general,
weaker 
than 
\eqref{eq:boolean}: 
\begin{equation}\label{eq:explosive}
E(\A) : = \{ a \in A : a \land \nnot a = 0\}.
\end{equation}
Indeed, it is easy to see that 
$B(\A) \subseteq E(\A)$ holds in general, for on a residuated lattice $a \lor \nnot a = 1$ entails
$a \land \nnot a = 0$, but not the other way round. However, if $\sim$ is involutive then $B(\A) = E(\A)$.


For our present concern, it is
useful 
to briefly recall from~\cite{FlaRi} how the generic algebras in $\PQNA$ can be characterized.  Consider a {\em weak-negation}  function $n:[0,1]\to[0,1]$, that is  an order reversing mapping such that $n(1) = 0$ and $n(n(x)) \geq x$ for all $x \in [0, 1]$ (see  \cite{EsDo80,cegm}). Let us further assume that $n$ satisfies the following conditions: 
\begin{enumerate}[(i)]
\item there is an element $f\in (0,1)$ that is the (unique) fixpoint, i.e., $n(f)=f$;  
\item there is a decreasing sequence $f>a_0>a_1>a_2>\ldots>0$ of points converging to $0$ such that for all $i\in \mathbb{N}$, $n(a_i)>a_i$;
\item\label{pointA}  if $0$ is a discontinuity point of $n$, 
(by the general structure of negations in fuzzy logics: see e.g., \cite{cegm}), then there exists an element $a<1$ such that $n(b)=0$ for all $b\in [a, 1]$.
\end{enumerate}
A function $n:[0,1]\to[0,1]$ as the above  behaves as shown 
in Figure~\ref{figNeg}, 
 \begin{figure}
        \centering
        \includegraphics[width=0.5\linewidth]{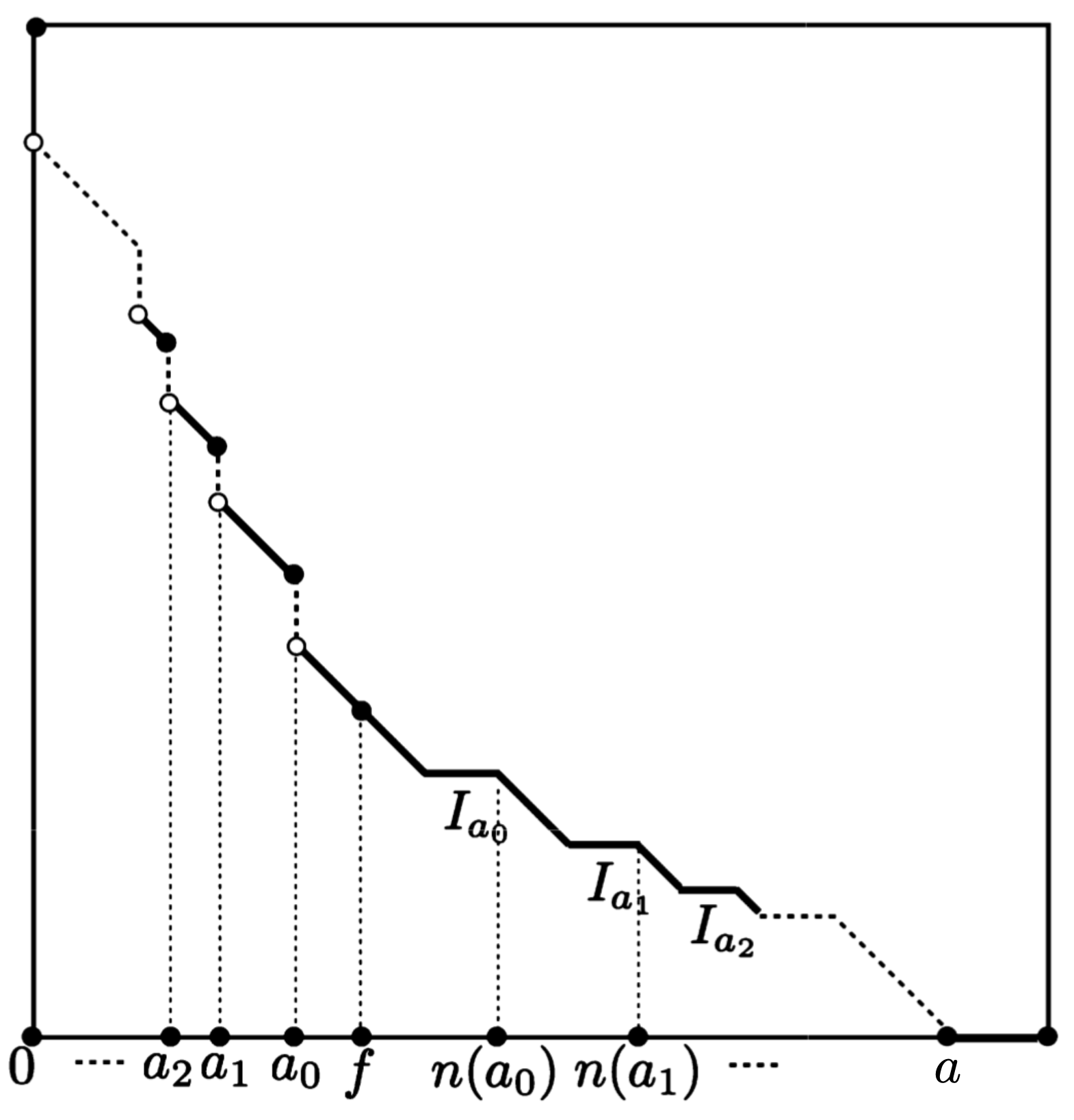}
        \caption{A negation function $n$ that determines the generic $\mathsf{PQN}$-algebra ${\bf S}_n$. Notice that the set $E({\bf S}_n)$ of explosive elements of ${\bf S}_n$ is nontrivial and coincides, in this case, with  $\{0\}\cup [a,1]$.}
        \label{figNeg}
    \end{figure} and defines a  left-continuous t-norm $\ast_n:[0,1]^2\to[0,1]$ as follows: for all $a,b\in [0,1]$,
$$
a\ast_n b=\left\{
\begin{array}{ll}
\min(a, b)&\mbox{ if }a> n(b),\\
0 &\mbox{ if }a\leq n(b).
\end{array}
\right.
$$
As explained in~\cite{cegm},
the operation $\ast_n$ thus determines a  {\em weak nilpotent minimum} algebra ({\em WNM}-algebra for short) 
that, adopting the notation of \cite{FlaRi}, we  denote by ${\bf S}_n$. For each weak negation $n$ satisftying the above conditions (i)-(iii), the algebra ${\bf S}_n$  generates the variety $\PQNA$ as proved in~\cite[Theorem 4.2]{FlaRi}.
    
    This result, together with the fact that $\PQNA$ is
    locally finite (hence
    generated by its finite chains;~\cite[Corollary~3.2]{FlaRi}) 
    shows that each ${\bf S}_n$ generates $\PQNA$. 
    We will henceforth denote by $\mathbb{S}_\infty$ the class of all these generic algebras.

\subsection{Quasi-Nelson logic}
\label{prel-logics}
The variety of (bounded, commutative, integral) residuated lattices is the equivalent algebraic semantics of the well-known substructural logic $\logicsf{FL}_{ew}$, Full Lambek calculus extended with exchange (commutativity) and weakening (integrality)~\cite{GaJiKoOn07}. An equivalent system called {\em Monoidal Logic} was previously defined and studied by H\"ohle~\cite{Hoh-95}.

The language of $\logicsf{FL}_{ew}$ consists of denumerably many propositional variables $p_1 , p_2, \ldots,$ binary connectives $\wedge, \lor, \&, \to$, and the truth constant $\bot$. Formulas, which will be denoted by lower case Greek letters $\phi, \psi, \ldots,$  are defined by induction as usual. Further connectives and constants are definable; in particular, $\nnot \phi$ stands for $\phi \to \bot$, $\top$ stands for $\nnot \bot$, and $\psi \leftrightarrow \phi$ stands for $(\psi\to \phi) \wedge (\phi \to \psi)$. A Hilbert-style calculus for $\logicsf{FL}_{ew}$ has the following
set of axioms:


\begin{itemize}
\item[(Ax1)] \quad $(\phi\to \psi)\to ((\psi \to \gamma)\to (\phi\to \gamma))$, 
\item[(Ax2)] \quad $(\gamma\to \phi)\to ((\gamma\to  \psi)\to (\gamma \to (\phi\wedge \psi)))$, 
\item[(Ax3)] \quad $(\psi\wedge \phi)\to \psi$, 
\item[(Ax4)] \quad $(\psi\wedge \phi)\to \phi$,
\item[(Ax5)] \quad $\psi\to (\psi\vee \phi)$,
\item[(Ax6)] \quad $\phi\to (\psi\vee \phi)$, 
\item[(Ax7)] \quad $(\psi \to \gamma)\to (( \phi\to \gamma)\to ((\psi\vee\phi)\to \gamma))$,
\item[(Ax8)] \quad $(\psi \& \phi)\to (\phi \& \psi )$,
\item[(Ax9)] \quad $(\psi \& \phi)\to  \psi$,
\item[(Ax10)]\quad  $(\psi\to (\phi \to \gamma))\to (( \psi\& \phi)\to \gamma)$,
\item[(Ax11)] \quad $((\psi\& \phi) \to \gamma )\to (\psi\to(\phi \to \gamma))$,
\item[(Ax12)] \quad $\bot \to \psi$,  
\item[(Ax13)] \quad $\psi\to \top$.
\end{itemize}

The only inference rule of $\logicsf{FL}_{ew}$ is modus ponens:
\begin{itemize}
\item[(MP)] \quad $\d\frac{\psi, \psi\to \phi}{\phi}$
\end{itemize}

\noindent \emph{Quasi-Nelson logic} $\mathcal{QN}$ 
is obtained as the axiomatic extension of  $\logicsf{FL}_{ew}$  with the Nelson axiom:

\begin{itemize}
\item[(Nel)] \quad $(( (\psi \& \psi)\to \phi)\wedge (({\nnot} \phi \& {\nnot} \phi) \to \nnot \psi))\to (\psi\to \phi)$.
\end{itemize}
\emph{Prelinear quasi-Nelson logic} $\mathcal{PQN}$, corresponding to the subvariety of prelinear quasi-Nelson algebras and studied in \cite{FlaRi},  is obtained by extending  $\mathcal{QN}$ by the prelinearity axiom:

\begin{itemize}
\item[(Prel)] \quad $(\phi \to \psi)\vee (\phi \to \psi )$.
\end{itemize}
As shown in \cite{FlaRi}, $\mathcal{PQN}$ can be seen  as an  axiomatic extension of  so-called {\em Weak Nilpotent Minimum} logic, that is in turn the axiomatic extension of the \em t-norm based monoidal logic}\footnote{The {\em t-norm based monoidal logic} $\logicsf{MTL}$ is the axiomatic extension of $\logicsf{FL}_{ew}$ with the prelinearity axiom (Prel), a well-known system of mathematical fuzzy logic \cite{esteva2001monoidal}.}
$\logicsf{MTL}$ by the weak nilpotent minimum axiom: 
\begin{itemize}
\item[(WNM)] \quad $(\phi\&\psi \to \bot) \vee (\phi \wedge \psi \to \phi\&\psi)$.
\end{itemize}
 
For each logic $\logicsfl{L}$ that is an axiomatic extension of $\logicsf{FL}_{ew}$, we denote by $\vdash_{\logicsfl{L}}$ the 
%
defined as usual from the corresponding sets of axioms described above and the inference rule of Modus Ponens (MP). 

As an axiomatic extension of $\logicsf{FL}_{ew}$,
each such logic $\logicsfl{L}$ is algebraizable 
and thus has an equivalent algebraic semantics given by the corresponding variety of $\logicsfl{L}$-algebras.  
This means that the truth-preserving (finitary) consequence relation $\models_{\logicsfl{L}}$ induced by the variety of $\logicsfl{L}$-algebras, defined as: \\

\noindent \begin{tabular}{lll}
$\Gamma \models_{\logicsfl{L}} \varphi$ &iff & for every $\mathsf{L}$-algebra $\alg{A}$ and every $\alg{A}$-evaluation $e$, \\
& & if $e(\psi) = 1$ for every $\psi \in \Gamma$, then $e(\varphi)=1$, 
\end{tabular}
\mbox{} \\

\noindent is such that $\vdash_{\logicsfl{L}}$ is sound and complete w.r.t.\ $\models_{\logicsfl{L}}$. 

For each such $\logicsfl{L}$ the paper~\cite{Font-GTV,bou2009logics} introduces a companion logic $\logicsfl{L}^{\mbox{\tiny $\leq$ }}$,  whose associated  consequence relation, denoted by $\models_{\logicsfl{L}}^{\mbox{\tiny $\leq$}}$, has the following semantics: for every set of formulas $\Gamma\cup\{\varphi\}$, \\

\noindent \begin{tabular}{lll}
$\Gamma \models_{\logicsfl{L}}^{\mbox{\tiny $\leq$}} \varphi$ &iff & for every $\sf{L}$-algebra $\alg{A}$, every $a \in A$, and every $\alg{A}$-evaluation $e$, \\
& & if $a \leq e(\psi)$ for every $\psi \in \Gamma$, then $a \leq e(\varphi)$.
\end{tabular}
\mbox{} 

\noindent  $\logicsfl{L}^{\mbox{\tiny $\leq$ }}$ is known as the companion logic of $\logicsfl{L}$ {\em preserving degrees of truth}, or the {\em degree-preserving companion} of $\logicsfl{L}$. 
It is not difficult to show that $\logicsfl{L}$ and $\logicsfl{L}^{\mbox{\tiny $\leq$ }}$ have the same valid formulas (i.e. $ \models_{\logicsfl{L}} \varphi$ iff $\models_{\logicsfl{L}}^{\mbox{\tiny $\leq$}} \varphi$), and that, for every finite set of formulas $\Gamma \cup \{\varphi\}$, the following property holds:
$$ \Gamma\models_{\logicsfl{L}}^{\mbox{\tiny $\leq$}} \varphi \mbox{ iff } \ \models_{\logicsfl{L}} \Gamma^\wedge \to \varphi ,$$
where $\Gamma^\wedge$ means $\gamma_1 \wedge \ldots \wedge \gamma_k$ if $\Gamma = \{\gamma_1,\ldots, \gamma_k\}$ (when $\Gamma$ is empty, then $\Gamma^\wedge$ is taken as~$\top$).

As for a syntactic presentation, if $\logicsfl{L}$ is an axiomatic extension of  $\logicsf{FL}_{ew}$, then the logic $\logicsfl{L}^{\mbox{\tiny $\leq$ }}$ admits a Hilbert-style axiomatization having  the same axioms as $\logicsfl{L}$ and the following deduction rules \cite{bou2009logics}:
\begin{description}
\item[(Adj-$\wedge$)] from $\varphi$ and $\psi$, derive $\varphi\wedge\psi$
\item[(MP-$r$)] from $\varphi$ and $\vdash_{\logicsfl{L}}\varphi\to\psi$, 
derive $\psi$.


\end{description}
Note that (MP-$r$) is a {\em restricted form} of the Modus Ponens rule, as it is only applicable when $\varphi\to\psi$ is a  theorem of $\logicsfl{L}$. Thus, although $\logicsfl{L}$ and $\logicsfl{L}^{\mbox{\tiny $\leq$ }}$ share the same theorems, $\logicsfl{L}^{\mbox{\tiny $\leq$ }}$ is in fact a weaker logic than $\logicsfl{L}$. 

If the set of theorems of $\logicsfl{L}$ is decidable, as it is the case for $\mathcal{QN}$ and $\mathcal {PQN}$, then the above system of axioms and rules provides a recursive Hilbert-style axiomatization of $\logicsfl{L}^{\mbox{\tiny $\leq$}}$. 

In the more general case of $\logicsfl{L}$ being not  an axiomatic extension but a finitary Rasiowa-implicative expansion of $\logicsf{FL}_{ew}$ (i.e.\ $\logicsfl{L}$ may have new inference rules and hence its algebraic semantics may be a sub-quasivariety of $\mathbb{RL}$), the (semantic) definition of the degree-preserving companion $\logicsfl{L}^{\mbox{\tiny $\leq$}}$ keeps being the same as above. However, the axiomatization needs to be somehow adapted. Assume the new inference rules of $\logicsfl{L}$ are: 
\begin{description}
\item[(R$_i$)] from $\Gamma_i$, derive $\varphi_i$,
\end{description}
for $i\in I$. Then, following the same idea of  \cite[Th. 2.12]{bou2009logics}, 
the following generalized result about the axiomatization of $\logicsfl{L}^{\mbox{\tiny $\leq$}}$ can be shown, see e.g.\ \cite{dIRL}.

\begin{proposition} \label{axlessequal} Let $\logicsfl{L}$ be an expansion of $\logicsf{FL}_{ew}$, with the above set of new inference rules $\{ (R_i) \}_{i\in I}$.
Then $\logicsfl{L}^{\mbox{\tiny $\leq$}}$ is axiomatized by the axioms of\/ $\logicsfl{L}$, the inference rules (Adj-$\wedge$) and (MP-$r$), and the following restricted inference rules: for each  $i\in I$,
\begin{description}
\item[(R$_i$-$r$)] from\/  $\vdash_{\logicsfl{L}} \Gamma_i$, 
derive $\varphi_i.$
\end{description}
\end{proposition}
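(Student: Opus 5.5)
Write $\vdash^*$ for the consequence relation axiomatized by the axioms of $\logicsfl{L}$, (Adj-$\wedge$), (MP-$r$) and the rules (R$_i$-$r$). We must show that $\Gamma \vdash^* \varphi$ iff $\Gamma\models_{\logicsfl{L}}^{\mbox{\tiny $\leq$}} \varphi$. The plan follows \cite[Th.~2.12]{bou2009logics}: soundness by checking each rule, completeness by reducing to the finite case and then to theoremhood in $\logicsfl{L}$. Throughout we use that $\logicsfl{L}$ is complete with respect to its quasivariety of $\logicsfl{L}$-algebras. Since the definition of $\models_{\logicsfl{L}}^{\mbox{\tiny $\leq$}}$ only involves lattice order, this completeness is all that is needed from the expansion.

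\emph{Soundness.} We check that each axiom and rule preserves the property ``$a \leq e(\cdot)$'' for fixed $a$ and $e$.
\begin{itemize}
\item Axioms of $\logicsfl{L}$ evaluate to $1$, so $a \le e(\cdot)$ holds trivially.
\item (Adj-$\wedge$) is sound because $a\le e(\varphi)$ and $a\le e(\psi)$ give $a\le e(\varphi)\wedge e(\psi)$.
\item For (MP-$r$): if $\vdash_{\logicsfl{L}}\varphi\to\psi$, then $e(\varphi)\to e(\psi)=1$, i.e.\ $e(\varphi)\le e(\psi)$, so $a \le e(\varphi)$ gives $a \le e(\psi)$.
\item For (R$_i$-$r$): if every formula of $\Gamma_i$ is a theorem of $\logicsfl{L}$, then since (R$_i$) is a rule of $\logicsfl{L}$, $\varphi_i$ is also a theorem. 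Hence $e(\varphi_i)=1\ge a$.
\end{itemize}
Note that (R$_i$-$r$) applies only to premises that are theorems of $\logicsfl{L}$, not to arbitrary derived formulas. This restriction is what makes the rule sound for degree preservation.

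\emph{Completeness.} First reduce to finite $\Gamma$. This requires $\models_{\logicsfl{L}}^{\mbox{\tiny $\leq$}}$ to be finitary. I would derive this from the finitarity of $\logicsfl{L}$ and the identity
\[
\Gamma\models_{\logicsfl{L}}^{\mbox{\tiny $\leq$}}\varphi \quad\text{iff}\quad \models_{\logicsfl{L}}\Gamma_0^\wedge\to\varphi \text{ for some finite } \Gamma_0\subseteq\Gamma.
\]
The proof of this identity is where I expect the main obstacle to lie. The route I would try first uses the $\logicsfl{L}$-Lindenbaum algebra $\mathbf{Fm}/{\equiv_{\logicsfl{L}}}$, which lies in the quasivariety. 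Consider the filter $F$ of this algebra generated by $\Gamma$. If $\varphi$ does not lie above any finite meet of elements of $\Gamma$, take the lattice-ideal generated by $\varphi$, which is disjoint from $F$ as a lattice filter. Use distributivity, or more simply pass to the ultraproduct/compactness argument over the finite meets, to find an algebra and an element $a$ witnessing failure. Alternatively, one can simply take the definition in the paper to be restricted to finite $\Gamma$ and extend finitarily, as \cite{bou2009logics} does.

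For finite $\Gamma=\{\gamma_1,\dots,\gamma_k\}$, suppose $\Gamma\models_{\logicsfl{L}}^{\mbox{\tiny $\leq$}}\varphi$. Taking $a=e(\Gamma^\wedge)$ gives $e(\Gamma^\wedge)\le e(\varphi)$ for every evaluation $e$, so $\models_{\logicsfl{L}}\Gamma^\wedge\to\varphi$. By completeness of $\logicsfl{L}$ this yields $\vdash_{\logicsfl{L}}\Gamma^\wedge\to\varphi$. In $\vdash^*$ we then derive $\Gamma^\wedge$ from $\Gamma$ by (Adj-$\wedge$), and obtain $\varphi$ by (MP-$r$). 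When $\Gamma$ is empty, $\varphi$ is a theorem of $\logicsfl{L}$, so it suffices to show that $\vdash^*$ proves all theorems of $\logicsfl{L}$. We do this by induction on $\logicsfl{L}$-derivations:
\begin{itemize}
\item axioms are included;
\item an MP step from theorems $\psi$ and $\psi\to\phi$ is an instance of (MP-$r$);
\item an (R$_i$) step from theorems is exactly (R$_i$-$r$).
\end{itemize}
This last induction is precisely why the restricted rules (R$_i$-$r$) must be added when $\logicsfl{L}$ has extra rules.
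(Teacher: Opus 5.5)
Your proof is correct and is essentially the argument the paper has in mind. The paper gives no proof of its own but defers to the method of \cite[Th.~2.12]{bou2009logics} (as adapted in \cite{dIRL}): soundness rule by rule; completeness via $\vdash_{\logicsfl{L}}\Gamma^\wedge\to\varphi$ followed by (Adj-$\wedge$) and (MP-$r$); and the induction showing that every theorem of $\logicsfl{L}$ is derivable using (MP-$r$) and (R$_i$-$r$). In the finitarity step, drop the distributivity idea: an expansion of $\logicsf{FL}_{ew}$ need not be distributive, and a prime lattice filter does not by itself yield an $\logicsfl{L}$-algebra with the required element $a$. Your other route, the ultraproduct argument over the finite subsets of $\Gamma$, does work, because $\logicsfl{L}$-algebras form a quasivariety and are therefore closed under ultraproducts.
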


\section{$\PQNA$-algebras with 
undeterminedness  operators}
\label{recov-undeterminded}

In this section we first recall some basic definitions from~\cite{FiRi24}  about $\mathsf{QN}$-algebras expanded with undeterminedness operators;  right after we are going to focus 
on the  expansions of $\mathsf{PQN}$-algebras with this type of operator.
%

\begin{definition}
\label{def:mincop}
A  \emph{$\mathsf{min}$-undeterminedness operator} on 
a $\mathsf{QN}$-algebra $\A$ 
is a
unary operator $\bu$ 
satisfying the following quasi-equations:
\begin{enumerate}[(i)]
    \item $x \lor \nnot x \lor y = 1$ \ implies \ $ \bu x \leq y$.
    \item $\bu x \leq y$  \ implies \ $x \lor \nnot x \lor y = 1$.
\end{enumerate}
A Boolean minimum or \emph{$\mathsf{Bmin}$-undeterminedness operator} 
is a  $\min$-undeterminedness operator 
that further satisfies the following equation
(ensuring that $\bu a$ is a Boolean element):
      \begin{enumerate}[(i)]
    \setcounter{enumi}{2}
    \item $ \bu x  \lor \nnot  \bu x = 1 $.
\end{enumerate}
\end{definition}

We shall denote by $\QNmin_{\sf m}$,
$\QNbmin$ the classes of $\mathsf{QN}$-algebras expanded by $\mathsf{min}$ and $\mathsf{Bmin}$ undeterminedness operators, respectively.  $\PQNmin_{\sf m}$ and $\PQNbmin$ will denote
the 
subclasses of the above  where prelinearity holds.

The conditions in Def.~\ref{def:mincop} can be equivalently given by the following set of equations~\cite[Thm.~1]{FiRi24}: 
\begin{enumerate}[(i)]
\item $\bu x 
 \leq y \lor \bu  (x \lor \nnot x \lor y)$.
\item  $x \lor \nnot x \lor \bu x = 1$.
\item $\bu 1 \approx 0$.
 \end{enumerate}
Therefore the classes $\QNmin_{\sf m}$, $\QNbmin$, $\PQNmin_{\sf m}$ and $\PQNbmin$
are in fact varieties.

\begin{definition}
\label{def:minBco}
A unary operator $\bb$ on a $\mathsf{PQN}$-algebra $\A$ is a  \emph{$\mathsf{minB}$-undeterminedness operator}  (minimum Boolean undeterminedness operator), if
the following (quasi-)equations are satisfied:
\begin{enumerate}[(i)]
\item $ x \lor \nnot x \lor \bb x = 1$.
\item  $\bb x \lor \nnot \bb x = 1$.
\item $ x \lor \nnot x \lor y = 1$ and $y \lor \nnot y = 1 $ imply $\bb x \leq y$.
 \end{enumerate}
\end{definition}

By $\QNminb$ we will henceforth denote the class of $\mathsf{QN}$-algebras endowed with a $\mathsf{minB}$-undeterminedness operator and $\PQNminb$ will stand for its subclasses of prelinear algebras.

Similarly to the case of $\mathsf{min}$ and $\mathsf{Bmin}$ operators, condition (iii) in Definition \ref{def:minBco} can be replaced by the following two equations \cite[Thm.~3]{FiRi24}: 

\begin{enumerate}[(i)]
\item[(iii-1)] $\bb 1 \approx 0$.
\item[(iii-2)]   $\bb x  \leq y \lor \bb (y \lor \nnot y) \lor \bb  (x \lor \nnot x \lor y)$.
 \end{enumerate}
Hence, $\QNminb$ and $\PQNminb$ are varieties as well.


Let us now focus 
on the variety of prelinear algebras, $\mathsf{PQN}$. This class 
is semilinear, i.e.~generated by its totally ordered members. Following \cite{dIRL}, we shall try to determine whether semilinearity is preserved once  recovery operators are added to the language.  
Concerning  the (quasi)varieties introduced above, let us start by observing the following facts that will serve as motivation for the results of both the present and the next section.

\begin{remark}\label{remMaxNonSemilinear}
In~\cite{dIRL} one may find 
an example of a finite $\mathsf{NM}$-algebra (i.e., an involutive $\mathsf{PQN}$-algebra: see 
\cite{esteva2001monoidal,rfn}) endowed with a $\mathsf{min}$ and a $\mathsf{minB}$  operator $\bu$ that is,
in both cases, subdirectly irreducible but not
totally ordered.\footnote{The example from \cite{dIRL} actually concerns an $\mathsf{NM}$-algebra endowed with a $\mathsf{max}$ and a $\mathsf{maxB}$ consistency operator $\circ$, see next Section \ref{prelinearity}. However, in the involutive case 
$\bu$ is definable as $\nnot \circ$,
hence the example can be applied to $\bu$ as well.} 
This entails that 
the varieties obtained as expansions of $\mathsf{NM}$ by either a $\mathsf{min}$, a $\mathsf{minB}$ 
undeterminedness operator -- 
which are proper subvarieties of (resp.) $\PQNmin_{\sf m}$, $\PQNminb$ -- 
are not generated by their totally ordered members and hence they are not semilinear. In consequence,  $\PQNmin_{\sf m}$, $\PQNminb$ 
are not semilinear either.  \hfill $\Box$
\end{remark}

The above remark 
leaves open the question whether $\PQNbmin$ is semilinear, that we will now address.  
Let us start by proving the following.
\begin{lemma}\label{lemmaBA}
    Let 
     $\langle {\bf A}, \bullet\rangle \in \PQNbmin$. Then $B({\bf A})=\{0,1\}$ if and only if ${\bf A}$ is totally ordered.
\end{lemma}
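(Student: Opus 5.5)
The plan is to prove the two implications separately. Only the left-to-right one uses the operator $\bu$; the converse is a fact about chains.

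\emph{($\Leftarrow$)} Assume $\A$ is totally ordered and let $a \in B(\A)$, so $a \lor \nnot a = 1$. In a chain a join equals one of its arguments, so $a = 1$ or $\nnot a = 1$. In the second case $a = a * 1 = a * \nnot a \le 0$ by residuation, hence $a = 0$. Thus $B(\A) = \{0,1\}$.

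\emph{($\Rightarrow$)} Assume $B(\A) = \{0,1\}$. First I will pin down $\bu$ completely. By condition (iii) of Definition~\ref{def:mincop}, $\bu x \in B(\A)$, so $\bu x \in \{0,1\}$ for every $x$. If $\bu x = 0$, the equation $x \lor \nnot x \lor \bu x = 1$ from~\cite[Thm.~1]{FiRi24} gives $x \lor \nnot x = 1$. Then $x \in B(\A)$, so $x \in \{0,1\}$. Hence $\bu x = 1$ for every $x \notin \{0,1\}$. Combining this with condition (i) of Definition~\ref{def:mincop} gives the key consequence: if $x \notin \{0,1\}$ and $x \lor \nnot x \lor y = 1$, then $1 = \bu x \le y$, so $y = 1$.

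Now suppose, towards a contradiction, that $a, b \in A$ are incomparable, and put $u = a \imp b$ and $v = b \imp a$. By prelinearity $u \lor v = 1$. By integrality $u \neq 1$ and $v \neq 1$, since otherwise $a \le b$ or $b \le a$. Also $u \neq 0$, for $u = 0$ would force $v = u \lor v = 1$. So $u \notin \{0,1\}$. Moreover $u \lor \nnot u \lor v \ge u \lor v = 1$. The key consequence with $x = u$ and $y = v$ then yields $v = 1$, a contradiction. Hence $\A$ is a chain.

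I expect the main obstacle to be choosing the right witness $y$ in condition (i), since the hypothesis $x \lor \nnot x \lor y = 1$ must hold while $y \neq 1$. Using the two prelinearity disjuncts $a \imp b$ and $b \imp a$, one as $x$ and the other as $y$, is the natural choice: their join is already $1$, so the hypothesis holds automatically. The rest is routine.
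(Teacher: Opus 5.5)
Your proof is correct and is essentially the paper's argument: it uses the same prelinearity witnesses $a\to b$ and $b\to a$, condition (i) of Definition~\ref{def:mincop} with $y = b\to a$, and the Booleanness of $\bullet$ together with $x\lor\nnot x\lor\bullet x = 1$. The only difference is the order of steps: the paper first derives $\bullet(a\to b)\le b\to a<1$, hence $\bullet(a\to b)=0$ and so $a\to b\in B(\mathbf{A})$, a contradiction, whereas you first show $\bullet x=1$ for every $x\notin\{0,1\}$ and then conclude $b\to a=1$.
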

\begin{proof}
The 
right-to-left
direction is clear. To prove that $B({\bf A}) = \{0, 1\}$ implies that ${\bf A}$ is a chain, assume  by way of contradiction that it is not.
Then there are elements $a,b \in A$ such that 
$a \imp b \neq 1 \neq b \imp a$. Let $c : = a \imp b $ and $d: = b \imp a$. By the prelinearity equation,
we have $c \lor d = 1$.
If we had $c = 0$, then we would have
$c \lor d = d = b \to a = 1$, against our assumptions. 
A symmetric reasoning applies to $d$, so we have
$c, d > 0 $, therefore  $c, d \notin B(\A)$.
Therefore, $c\vee \nnot c\vee d=1$ as well and hence, by Definition \ref{def:mincop}, $\bullet c\leq d<1$. Since $\bullet c$ is Boolean, necessarily $\bullet c=0$, hence $c\in B({\bf A})$ by \cite[Proposition 1 (i)]{FiRi24},  contradicting the assumption that $c,d\not\in B({\bf A})$.
\end{proof}

The preceding lemma gives us the following.

\begin{theorem}\label{thm:PQNminSemiLin}
    The variety  $\PQNbmin$ is semilinear. 
\end{theorem}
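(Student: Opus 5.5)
The plan is to show that every subdirectly irreducible member of $\PQNbmin$ is a chain. Since every variety is generated by its subdirectly irreducible members, this yields semilinearity. By Lemma~\ref{lemmaBA}, it suffices to show the following: if $\langle \A, \bu\rangle \in \PQNbmin$ is subdirectly irreducible, then $B(\A) = \{0,1\}$.

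To get this, I would use the standard correspondence between congruences and Boolean elements. Take $b \in B(\A)$ with $b \neq 0, 1$. Boolean elements of a commutative integral residuated lattice are idempotent and satisfy $b * x = b \land x$. Consider the maps $x \mapsto x \land b$ and $x \mapsto x \land \nnot b$. Their kernels $\theta_b$ and $\theta_{\nnot b}$ are the residuated-lattice congruences associated with the filters ${\uparrow} b$ and ${\uparrow}\nnot b$. Moreover $\theta_b \cap \theta_{\nnot b}$ is the identity: if $x \land b = y\land b$ and $x\land \nnot b = y \land \nnot b$, then joining and using distributivity with $b \lor \nnot b = 1$ gives $x = y$. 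Both congruences are nontrivial, because $b \neq 0,1$ (so $b\ne 1$ gives $(1,b)\in\theta_b$ with $1\ne b$, and $b\ne0$ gives $(1,\nnot b)$... similarly). Hence $\A$ decomposes as a subdirect product, contradicting subdirect irreducibility, provided these congruences are also compatible with $\bu$.

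This compatibility is the main obstacle. It amounts to showing that $\bu$ respects congruences induced by Boolean filters. I would try to prove the identity
\[ \bu(x \land b) \land b = \bu x \land b \]
or an analogous statement using $\to$ and the congruence characterization $x\,\theta\, y$ iff $(x \leftrightarrow y) \geq b$.

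The route I would take is the equational characterization in \cite[Thm.~1]{FiRi24}: from $\bu x \le y \lor \bu(x\lor\nnot x\lor y)$ one obtains monotonicity-type properties of $\bu$ relative to the element $x \lor \nnot x$. A cleaner alternative is to recall the known fact (cf.~\cite{FiRi24}) that, in a $\mathsf{Bmin}$ algebra, $\bu x$ is the least Boolean $c$ with $x\lor\nnot x\lor c = 1$, i.e.\ $\bu x = \nnot\nnot$-type complement of the largest Boolean below $x\lor\nnot x$. This relation is preserved by quotienting modulo a Boolean filter, since Boolean elements of $\A/\theta_b$ are images of elements of the form $c\land b$, with $c$ Boolean. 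Then $(\bu x)/\theta_b$ satisfies the defining quasi-equations of Definition~\ref{def:mincop} in the quotient, and uniqueness of the $\mathsf{min}$ operator shows that $\theta_b$ is a congruence of the expansion.

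If that verification becomes delicate, a fallback is to work with the equations directly: check that $x\,\theta_b\,y$, i.e.\ $x\land b=y\land b$, implies $\bu x\land b\le \bu y$. For this, set $z=\bu y\lor\nnot b$ in the quasi-equation (i) of Definition~\ref{def:mincop}, and verify $x\lor\nnot x\lor z=1$ using $b\lor\nnot b=1$ and distributivity.
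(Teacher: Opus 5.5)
Your proposal is correct and follows the paper's route: both arguments reduce, via Lemma~\ref{lemmaBA}, to showing that irreducible members satisfy $B(\mathbf{A})=\{0,1\}$. The only difference is that the paper cites \cite[Theorem~5]{FiRi24} for directly indecomposable algebras, whereas you prove the needed fact for subdirectly irreducible ones yourself. Of your sketched options, the fallback is the one that actually works and should be the official argument: from $x\land b=y\land b$ you also get $\nnot x\land b=\nnot y\land b$, since $\theta_b$ is already a residuated-lattice congruence, so distributivity gives $x\lor\nnot x\lor\nnot b=y\lor\nnot y\lor\nnot b$, hence $\bu x\le \bu y\lor\nnot b$ and $\bu x\land b\le \bu y\land b$.
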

\begin{proof}
By \cite[Theorem 5]{FiRi24} (see also \cite[Theorem 3.8]{dIRL})  the algebras 
whose Boolean elements are in $\{0,1\}$ are exactly the 
 directly indecomposable
members of $\PQNbmin$. Thus, by Lemma \ref{lemmaBA} every directly indecomposable (a fortiori, every subdirectly irreducible)  algebra in $\PQNbmin$ is totally ordered; hence $\PQNbmin$ is semilinear. 
\end{proof}

As a corollary of the above result we obtain that $\PQNbmin$ is a semisimple variety.


\begin{corollary}
Let $\langle {\bf A}, \bullet\rangle \in \PQNbmin$. 
The following conditions are equivalent:
\begin{enumerate}[(i)]
\item $\langle\Al, \bu\rangle $ is simple.
\item $\langle\Al, \bu\rangle $ is subdirectly irreducible.
\item $\langle\Al, \bu\rangle $ directly indecomposable.
\item  $B(\Al) = \{0, 1 \}$.
\item $\A$ is totally ordered.
\end{enumerate}
    Therefore, $\PQNbmin$ is semisimple.
\end{corollary}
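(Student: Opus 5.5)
The plan is to prove the cycle (i)$\Rightarrow$(ii)$\Rightarrow$(iii)$\Rightarrow$(iv)$\Rightarrow$(v)$\Rightarrow$(i). Semisimplicity then follows at once. Every algebra in a variety is a subdirect product of subdirectly irreducible members, and by (ii)$\Rightarrow$(i) these are all simple.

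The first two implications are standard universal algebra. A simple algebra is subdirectly irreducible, since its only congruences are the identity and the total one. A subdirectly irreducible algebra is directly indecomposable, since a nontrivial direct decomposition yields two nontrivial congruences whose meet is the identity. For (iii)$\Rightarrow$(iv) I will invoke \cite[Theorem 5]{FiRi24} (cf.\ \cite[Theorem 3.8]{dIRL}), already used in the proof of Theorem~\ref{thm:PQNminSemiLin}: the directly indecomposable members of $\PQNbmin$ are exactly those with $B(\A)=\{0,1\}$. Implication (iv)$\Rightarrow$(v) is precisely Lemma~\ref{lemmaBA}.

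The real content is (v)$\Rightarrow$(i): a totally ordered $\langle \A,\bu\rangle\in\PQNbmin$ is simple. I will use the usual correspondence between congruences of the residuated-lattice reduct and filters (deductive filters closed under $*$ and upward). The first step is to check that every such filter $F$ is compatible with $\bu$. On a chain, $B(\A)=\{0,1\}$, so $\bu$ takes values in $\{0,1\}$. Moreover $\bu a=0$ holds iff $a\lor\nnot a=1$. One direction is \cite[Proposition 1 (i)]{FiRi24}; the other follows from Definition~\ref{def:mincop}(i) with $y=0$. On a chain, $a\lor\nnot a=1$ means $a=1$ or $\nnot a=1$, that is, $a\in\{0,1\}$. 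Hence $\bu a=0$ for $a\in\{0,1\}$ and $\bu a=1$ otherwise.

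The second step is to show that a proper filter $F\neq\{1\}$ cannot exist. Take $a\in F$ with $a<1$. By 3-potency, $a^2=a^3$. The key fact about $\mathsf{PQN}$-chains (WNM-chains) I will use is that $a*a$ is either $a$ or $0$. If $a^2=0$, then $0\in F$ and $F$ is total. If $a^2=a$, the plain residuated-lattice reduct can have nontrivial filters, so the operator $\bu$ is essential here. I would argue via congruences instead. Let $\theta$ be a congruence with $(a,1)\in\theta$ for some $a<1$. If $a=0$, then $\theta$ is total. Otherwise $0<a<1$, so $\bu a=1$ while $\bu 1=0$. Compatibility of $\theta$ with $\bu$ then gives $(1,0)\in\theta$, so $\theta$ is total. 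Thus every nonidentity congruence, which must identify some $a<1$ with $1$, is total, and the chain is simple.

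The main obstacle is making this congruence argument airtight. The point to verify is that a nonidentity congruence of the expanded algebra collapses some element strictly below $1$ to $1$. This follows because $(b,c)\in\theta$ with $b<c$ implies $(c\to b,1)\in\theta$ and $c\to b<1$, by the filter correspondence for the lattice reduct. I expect this to suffice, with no need to analyse idempotents.
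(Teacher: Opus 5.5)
Your proof is correct, but it closes the cycle at a different place than the paper does.

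\textbf{The paper's route.} The paper obtains (i)$\Leftrightarrow$(ii)$\Leftrightarrow$(iii)$\Leftrightarrow$(iv) in one go by adapting \cite[Theorem 3.16]{dIRL}. It then uses Lemma~\ref{lemmaBA} for (iv)$\Rightarrow$(v), and needs only the trivial converse (v)$\Rightarrow$(iv).

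\textbf{Your route.} You take the free implications (i)$\Rightarrow$(ii)$\Rightarrow$(iii). For (iii)$\Rightarrow$(iv) you use only the half of \cite[Theorem 5]{FiRi24} saying that directly indecomposable algebras have $B(\mathbf{A})=\{0,1\}$. You then prove the substantive direction yourself as (v)$\Rightarrow$(i):
\begin{itemize}
\item on a chain, $\bu$ must be the operator in \eqref{eqBMinChain};
\item a congruence identifying $b<c$ also identifies $c\to b<1$ with $1$ (compatibility with $\to$ suffices; no filter correspondence is needed);
\item then either $c\to b=0$, or $(\bu(c\to b),\bu 1)=(1,0)$ lies in the congruence, and either way everything collapses.
\end{itemize}

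\textbf{What each route buys.} Your argument is explicit where the paper only says ``a simple adaptation''. It also shows that Booleanity of $\bu$ plays no role in chains being simple; it enters only through Lemma~\ref{lemmaBA}. In exchange, the paper's route keeps (i)--(iv) equivalent without prelinearity. Indeed, (iv)$\Rightarrow$(i) holds for any $\mathsf{Bmin}$ operator by your own argument without the chain:
\begin{itemize}
\item a nonidentity congruence $\theta$ identifies some $a<1$ with $1$;
\item $\bu a\in\{0,1\}$;
\item if $\bu a=0$, then $a\lor\nnot a=1$, so $a\in B(\mathbf{A})=\{0,1\}$, i.e.\ $a=0$;
\item if $\bu a=1$, then $(1,0)=(\bu a,\bu 1)\in\theta$.
\end{itemize}

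\textbf{Write-up fixes.} Delete the sentence claiming that every filter of the reduct is compatible with $\bu$. As stated it is false: the filters of the chain reduct other than $\{1\}$ and $A$ are precisely the ones incompatible with $\bu$, which is the whole point of the argument. Also delete the unused detour through $a^2\in\{a,0\}$; the congruence argument alone suffices.
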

\begin{proof}
A simple adaptation of the proof of \cite[Theorem 3.16]{dIRL} shows that the claims (i)--(iv) are all equivalent 
and by Theorem~\ref{thm:PQNminSemiLin} (iv) implies (v). Thus, it is enough to prove that (v) implies (iv). The claim is indeed trivial because the unique Boolean chain is $\{0,1\}$. The semisemplicity of $\PQNbmin$ hence directly follows.
\end{proof}
Let us notice that on a $\mathsf{PQN}$-chain $\Al$ there is only one way to define a $\mathsf{min}$-undeterminedness operator. Indeed, recall that $a \vee \nnot a <1$ for all $a \in \Al$ with $a \not\in\{0,1\}$, therefore the minimum  
element $b$ of $\Al$ such that $a\vee\nnot a\vee b=1$ is necessarily $1$. Thus, on a chain $\Al$, such an operator $\hat\bu$ is given, for all
$a \in A$, by:
\begin{equation}\label{eqBMinChain}
\hat\bu a=\left\{
\begin{array}{ll}
0&\mbox{ if }a\in  \{0,1 \} 
\\
1&\mbox{ otherwise.}
\end{array}
\right.
\end{equation}
Notice that the operator $\hat\bu$ is also a $\mathsf{Bmin}$ and a $\mathsf{minB}$-undeterminedness operator

We now want to show that, in addition to being semilinear and semisimple, the variety $\PQNbmin$ is single chain generated, that is to say,  the whole variety $\PQNbmin$ can be generated by a single standard chain. 
To this end let us first prove the next easy result.
\begin{proposition}\label{prop:locFin}
Let $\langle \Al, \hat\bu\rangle$ be any chain in $\PQNbmin$ and let $X$ be a finite subset of $A$. Then the 
subalgebra generated by $X$ is finite. 
\end{proposition}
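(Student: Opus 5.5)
The proof combines the local finiteness of $\PQNA$ with the observation that, on a chain, the operator $\hat\bu$ given by \eqref{eqBMinChain} only takes the values $0$ and $1$. We may assume $X$ is nonempty, since otherwise we replace $X$ by $\{0\}$ without changing the generated subalgebra.

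First consider the $\PQNA$-reduct $\Al$. By \cite[Corollary~3.2]{FlaRi}, $\PQNA$ is locally finite. Hence the $\PQNA$-subalgebra of $\Al$ generated by the finite set $X\cup\{0,1\}$, call it $\A_0$, has finite universe $A_0$.

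Next we check that $A_0$ is closed under $\hat\bu$. By \eqref{eqBMinChain}, for every $a\in A$ we have $\hat\bu a\in\{0,1\}$. The constants $0$ and $1$ belong to every subalgebra, so $\hat\bu a \in \{0,1\}\subseteq A_0$ for each $a\in A_0$. Thus $A_0$ is closed under all operations of $\langle\Al,\hat\bu\rangle$, and $\langle \A_0, \hat\bu\restriction_{A_0}\rangle$ is a subalgebra of $\langle \Al,\hat\bu\rangle$ containing $X$.

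Finally, any subalgebra of $\langle\Al,\hat\bu\rangle$ containing $X$ is in particular a $\PQNA$-subalgebra containing $X\cup\{0,1\}$, so it contains $A_0$. Therefore the subalgebra generated by $X$ is exactly $A_0$, which is finite.

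The one point to verify is that on a chain in $\PQNbmin$ the operator really is $\hat\bu$. This was argued just before the statement: on a $\PQNA$-chain, $a\vee\nnot a<1$ for $a\notin\{0,1\}$, and the $\mathsf{min}$ conditions of Definition~\ref{def:mincop} force uniqueness of the operator. Once that is granted, the argument is a direct combination of local finiteness of $\PQNA$ with the two-valuedness of $\hat\bu$.
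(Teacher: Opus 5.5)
Your proof is correct and follows the paper's argument: both use local finiteness of $\PQNA$ \cite[Cor.~3.2]{FlaRi} to obtain a finite $\mathsf{PQN}$-subchain, and then observe that $\hat\bu$ restricts to that subchain. You also make explicit the closure step that the paper leaves implicit, namely that $\hat\bu$ takes only the values $0$ and $1$, which belong to every subalgebra.
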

\begin{proof}
Every finitely generated $\mathsf{PQN}$-chain is finite (cf.~\cite[Cor.~3.2.]{FlaRi}). Therefore, if $X$ is a finite subset of a $\mathsf{PQN}$-chain $\Al$, the subchain of $\Al$ generated by $X$ is finite as $\mathsf{PQN}$-algebra, and the unique $\hat\bu$ definable  is that of $\Al$ restricted to the new domain. Thus the claim follows.
\end{proof}

Now, recall the class $\mathbb{S}_\infty$ of the generic algebras in $\PQNA$  described in Section \ref{intro:QN-algebras}.  
Take any  ${\bf S}_n\in \mathbb{S}_\infty$ and let us consider the algebra $\langle {\bf S}_n, \hat\bu \rangle$ 
where  $\hat\bu$  is  necessarily defined  as in (\ref{eqBMinChain}) above and it is the  unique undeterminedness operator on ${\bf S}_n$.  
The same proof strategy  used in \cite[Theorem 4.2]{FlaRi} to prove that ${\bf S}_n$ generates the variety $\PQNA$ trivially extends to this case.
Therefore the following holds.
\begin{theorem} \label{Sinfty}
For every $\mathsf{PQN}$-chain ${\bf S}_n$ that is generic for $\PQNA$,  the algebra $\langle{\bf S}_n,\hat\bu\rangle$ generates the  variety $\PQNbmin$. 
\end{theorem}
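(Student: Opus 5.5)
The plan is to reduce the claim to a statement about finite chains and then embed those finite chains into $\langle{\bf S}_n,\hat\bu\rangle$.

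\emph{Reduction to finite chains.} By Theorem~\ref{thm:PQNminSemiLin}, $\PQNbmin$ is generated by its totally ordered members. On any chain the undeterminedness operator is forced to be $\hat\bu$ as in~\eqref{eqBMinChain}. By Proposition~\ref{prop:locFin}, every finitely generated subalgebra of a chain in $\PQNbmin$ is a finite chain $\langle\Al,\hat\bu\rangle$. An equation failing in the variety fails in some chain, hence in a finitely generated subchain, hence in a finite chain. So it suffices to show that every finite chain $\langle\Al,\hat\bu\rangle\in\PQNbmin$ lies in $\mathbb{HSP}(\langle{\bf S}_n,\hat\bu\rangle)$. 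In fact I will aim to show that it lies in $\mathbb{IS}(\langle{\bf S}_n,\hat\bu\rangle)$.

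\emph{Embedding finite chains.} In the proof of \cite[Theorem 4.2]{FlaRi}, every finite $\mathsf{PQN}$-chain $\Al$ is shown to belong to the variety generated by ${\bf S}_n$. The natural way this is done, and the one I will follow, is to build an injective order-preserving map $h\colon A\to[0,1]$ that preserves $0$, $1$, $\ast$, $\to$ and hence $\nnot$. The idea is to send non-explosive elements below the fixpoint $f$ onto points among the $a_i$, send their negations to the corresponding $n(a_i)$, and use the flat region $[a,1]$ for the explosive part. So the first step is to extract from \cite{FlaRi} (or re-prove) that $h$ is a $\mathsf{PQN}$-embedding. If that proof only yields a homomorphic image of a subalgebra of a power, I would argue instead with the embedding into an ultrapower, or into a finite product of copies of ${\bf S}_n$; see the last paragraph.

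\emph{Compatibility with $\hat\bu$.} The second step is to check that $h$ also commutes with $\hat\bu$, and this is immediate. Both $\Al$ and ${\bf S}_n$ are chains, and $h$ is injective and preserves $0$ and $1$. Hence $a\in\{0,1\}$ if and only if $h(a)\in\{0,1\}$, which gives $h(\hat\bu a)=\hat\bu h(a)$. Thus $\langle\Al,\hat\bu\rangle$ is isomorphic to a subalgebra of $\langle{\bf S}_n,\hat\bu\rangle$, and the theorem follows.

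\emph{Main obstacle.} The delicate step is making sure the generation argument of \cite{FlaRi} really goes through subalgebras of ${\bf S}_n$ rather than through products. On a nontrivial product the operator $\hat\bu$ computed componentwise is not the chain operator, and an arbitrary $\mathsf{PQN}$-subalgebra of a product need not be closed under it. If \cite{FlaRi} uses ultrapowers or partial embeddings, I would handle it as follows. Since $\hat\bu$ is first-order definable on chains by the formula ``$x\in\{0,1\}$'', an ultrapower of $\langle{\bf S}_n,\hat\bu\rangle$ is again a chain carrying its own $\hat\bu$. Any $\mathsf{PQN}$-subchain of it is then automatically closed under $\hat\bu$ with the correct values. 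So partial embeddings of finite chains into ${\bf S}_n$ lift to embeddings of $\langle\Al,\hat\bu\rangle$ into $\mathbb{ISP_U}(\langle{\bf S}_n,\hat\bu\rangle)$, which is enough to conclude.
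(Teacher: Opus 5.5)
Your proposal is correct and follows the paper's own, very brief, argument. Semilinearity (Theorem~\ref{thm:PQNminSemiLin}) and local finiteness (Proposition~\ref{prop:locFin}) reduce everything to finite chains, and the embeddings of finite $\mathsf{PQN}$-chains into ${\bf S}_n$ from \cite[Theorem 4.2]{FlaRi} automatically commute with $\hat\bu$ because they are injective and preserve $0$ and $1$. One correction to your hedge: the fallback through finite products or homomorphic images would not work, since a $\hat\bu$-compatible congruence on a chain can only be the identity or the total congruence. So the argument genuinely needs \cite{FlaRi} to produce embeddings, which is exactly what the paper tacitly assumes. Indeed, by J\'onsson's lemma and the simplicity of chains in $\PQNbmin$, the theorem is equivalent to every finite $\mathsf{PQN}$-chain embedding into ${\bf S}_n$.
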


Let us end this section observing that although $\PQNmin$ and $\PQNminb$ are not semilinear, as shown in Remark~\ref{remMaxNonSemilinear},  a way to endow $\mathsf{PQN}$-algebras with $\mathsf{min}$ or $\mathsf{minB}$ undeterminedness operators, while keeping semilinearity, 
is to consider the sub-quasivarieties of $\PQNmin$ and $\PQNminb$ obtained by imposing  the following quasi-equation:
\begin{equation}
\text{ if } (x \leftrightarrow y)\lor z = 1 , \text{ then } (\bu x \leftrightarrow \bu y)\lor z = 1
\tag{Con-$\vee^\bu$}
\end{equation}

Let us denote them by $\PQNmin\mbox{-}\vee$ and $\PQNminb\mbox{-}\vee$ respectively. By 
 \cite[Corollary 6.2.8]{Cintula-Noguera-book}, the (algebraizable) logics whose equivalent algebraic semantics are $\PQNmin\mbox{-}\vee$ and $\PQNminb\mbox{-}\vee$, respectively, are semilinear. Thus, precisely $\PQNmin\mbox{-}\vee$ and $\PQNminb\mbox{-}\vee$ are  generated by their totally  members. On the other hand, we have seen that the unique undeterminedness operator that can be defined on a $\mathsf{PQN}$-chain is that of (\ref{eqBMinChain}). Hence, the chains in $\PQNbmin$, $\PQNmin\mbox{-}\vee$ and $\PQNminb\mbox{-}\vee$ coincide. 
Now, since it is generally true that 
$\mathsf{Q}(\mathsf{K}) \subseteq \mathsf{V}(\mathsf{K}) $
for any class of algebras $\mathsf{K}$, 
we have the next result.

\begin{proposition}\label{Prop:Var-V}
$\PQNbmin = \PQNmin\mbox{-}\vee = \PQNminb\mbox{-}\vee$.
\end{proposition}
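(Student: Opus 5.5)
The plan is to show that all three classes are generated, as quasivarieties, by the same class of chains, and then compare the quasivariety and variety operators. Let $\mathsf{C}$ be the class of all $\mathsf{PQN}$-chains expanded by the operator $\hat\bu$ of (\ref{eqBMinChain}).

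\textbf{Step 1: the three classes have the same chains.} On a $\mathsf{PQN}$-chain, every non-Boolean element $a\notin\{0,1\}$ satisfies $a\vee\nnot a<1$. Hence the least $b$ with $a\vee\nnot a\vee b=1$ is $1$, and it is also the least Boolean such $b$. So a $\mathsf{min}$, a $\mathsf{Bmin}$ and a $\mathsf{minB}$ operator on a chain must all coincide with $\hat\bu$. This $\hat\bu$ belongs to each of $\PQNbmin$, $\PQNmin$ and $\PQNminb$.

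We must also check that $\langle\A,\hat\bu\rangle$ satisfies (Con-$\vee^\bu$). On a chain, $(x\leftrightarrow y)\vee z=1$ means either $z=1$ or $x\leftrightarrow y=1$. In the second case $x=y$, so $\bu x\leftrightarrow\bu y=1$. In either case the conclusion holds. Hence the chains of $\PQNbmin$, $\PQNmin\mbox{-}\vee$ and $\PQNminb\mbox{-}\vee$ are all exactly $\mathsf{C}$.

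\textbf{Step 2: generation.} By \cite[Corollary 6.2.8]{Cintula-Noguera-book}, the two quasivarieties $\PQNmin\mbox{-}\vee$ and $\PQNminb\mbox{-}\vee$ have semilinear algebraizable logics. Hence each equals $\mathsf{Q}(\mathsf{C})$, being generated as a quasivariety by its chains.

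By Theorem~\ref{thm:PQNminSemiLin}, $\PQNbmin$ is semilinear, so it equals $\mathsf{V}(\mathsf{C})$. One could alternatively use Theorem~\ref{Sinfty} and the generic chain.

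\textbf{Step 3: comparing $\mathsf{Q}(\mathsf{C})$ and $\mathsf{V}(\mathsf{C})$.} Since $\mathsf{Q}(\mathsf{C})\subseteq\mathsf{V}(\mathsf{C})$, we get $\PQNmin\mbox{-}\vee=\PQNminb\mbox{-}\vee\subseteq\PQNbmin$.

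For the reverse inclusion, it suffices to show that $\PQNbmin$ satisfies (Con-$\vee^\bu$). Then $\PQNbmin$ sits inside both quasivarieties, because $\mathsf{Bmin}$ operators are in particular $\mathsf{min}$ operators, and, being prelinear and Boolean, also $\mathsf{minB}$ operators. The latter holds since the minimum over all $y$ is attained at a Boolean element.

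To verify the quasi-equation on $\PQNbmin$, we use semilinearity. Every algebra in $\PQNbmin$ is a subdirect product of chains in $\mathsf{C}$. Fix such a decomposition and suppose $(a\leftrightarrow b)\vee c=1$. In each chain factor, $(a_i\leftrightarrow b_i)\vee c_i=1$, so Step 1 gives $(\bu a_i\leftrightarrow\bu b_i)\vee c_i=1$ there. Hence the conclusion holds in the product, and therefore in the subalgebra.

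\textbf{Main obstacle.} The delicate point is this last step. In general, quasi-equations are not preserved by homomorphic images, so one cannot simply say "it holds in $\mathsf{V}(\mathsf{C})$". The argument has to go through the subdirect representation by chains, which is legitimate here because subdirectly irreducibles in $\PQNbmin$ are chains, and quasi-equations are preserved by subalgebras and products.

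We also need the claim that a $\mathsf{Bmin}$ operator is a $\mathsf{minB}$ operator. This should follow from Definition~\ref{def:mincop}(i) restricted to Boolean $y$.
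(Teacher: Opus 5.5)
Your proof is correct and follows the paper's route: you identify the chains of all three classes with the $\mathsf{PQN}$-chains carrying $\hat\bu$, you use \cite[Corollary 6.2.8]{Cintula-Noguera-book} to see that $\PQNmin\mbox{-}\vee$ and $\PQNminb\mbox{-}\vee$ are generated as quasivarieties by these chains, and you invoke $\mathsf{Q}(\mathsf{C})\subseteq\mathsf{V}(\mathsf{C})$. Your extra check that every member of $\PQNbmin$ satisfies (Con-$\vee^\bu$), being a subdirect product of such chains, together with the observation that $\mathsf{Bmin}$ operators are both $\mathsf{min}$ and $\mathsf{minB}$ operators, supplies the reverse inclusion of $\PQNbmin$ into both quasivarieties, which the paper's one-line justification leaves implicit.
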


\section{$\PQNA$-algebras 
with 
consistency operators
} \label{prelinearity}


The case of 
the consistency operators in the setting of $\mathsf{PQN}$-algebras generalizes what was studied in~\cite{dIRL}, which considered the expansion of involutive $\mathsf{MTL}$-algebras ($\mathsf{IMTL}$-algebras) by consistency operators $\circ$. Indeed, on $\mathsf{IMTL}$-algebras the $\circ$ and $\bu$ operators are  interdefinable, which is no longer the case in $\mathsf{PQN}$-algebras;  hence 
the study of the expansion of $\mathsf{PQN}$-algebras with consistency operators is not implicitly  covered by~\cite{FiRi24}.


\begin{definition}
\label{def:maxcop}
A  \emph{$\mathsf{max}$-consistency operator} on a $\mathsf{QN}$-algebra $\A$ 
is a unary operator $\circ$ that satisfies the following quasi-equations: 
$$
 x \land \nnot x \land y = 0 \qquad \text{ if and only if } \qquad  y \leq \circ x.
$$
A \emph{$\mathsf{Bmax}$-consistency operator} 
is a  $\mathsf{max}$-consistency operator that further satisfies the equation
$ \circ x  \lor \nnot  \circ x = 1 $ (ensuring that $\circ a$ is a Boolean element, for each $a \in A$).
\end{definition}
 By \cite[Prop. 4.3]{ConigBook}, the quasi-equation involved in the preceding definition of a max-consistency operator can be 
equivalently replaced by the following equations: 
\begin{center}
(i) $y \land \circ  (x \land \nnot x \land y) \leq \circ x$; \qquad (ii) $x \land \nnot x \land \circ x = 0$; \qquad (iii) $\circ 0 \approx 1$.
\end{center}
Thus the classes of $\mathsf{QN}$-algebras expanded by either a $\mathsf{max}$ or a $\mathsf{Bmax}$ operators are varieties. 
Following the earlier notational convention, we will respectively denote by $\QNmax_{\sf m}$ and $\QNbmax$ 
the varieties 
of $\mathsf{QN}$-algebras expanded by a $\mathsf{max}$ 
 and a $\mathsf{Bmax}$ consistency operators, while $\PQNmax_{\sf m}$ and $\PQNbmax$ will denote 
 the expansions of their prelinear sub-varieties.

As in the involutive case and in the case of undeterminedness operators considered earlier,
we shall also study a $\mathsf{maxB}$-consistency operator, defined as follows.

\begin{definition}
\label{def:maxBco}
A unary operator $\cb$ on a quasi-Nelson algebra $\A$ is a  \emph{$\mathsf{maxB}$-consistency operator} if:
\begin{enumerate}[(i)]
\item $ x \land \nnot x \land \cb x = 0$.
\item  $\cb x \lor \nnot \cb x = 1$.
\item $ x \land \nnot x \land y = 0$ and $y \lor \nnot y = 1 $ imply $y \leq \cb x$.
 \end{enumerate}
\end{definition}
By $\QNmaxb$  and $\PQNmaxb$ we shall respectively denote the quasivarieties of $\mathsf{QN}$-algebras and $\mathsf{PQN}$-algebras with  $\mathsf{maxB}$ consistency operators.

\begin{remark}
\label{rem:cecb}
In~\cite{ConigBook} 
yet another kind of operator was introduced, which we denoted by $\ce$ and dubbed a \emph{$\mathsf{maxE}$ operator}.
The letter E refers to the requirement that, for each element $a $ in a quasi-Nelson algebra $\A$, the element $\ce a$
be not a Boolean but an explosive element, i.e., that $\ce a \in E(\A)$.
This amounts to replacing the second and third items in Definition~\ref{def:maxBco} by the following two:
\begin{enumerate}[(i')]
      \setcounter{enumi}{1}

    \item  $\ce x \land \nnot \ce x = 0$.
\item $ x \land \nnot x \land y = 0$ and $y \land \nnot y = 0$ imply $y \leq \ce x$.
\end{enumerate}
This latter definition is
perhaps more suitable for consistency operators,
and is certainly
  smoother from a technical point of view:
in particular, as pointed out in~\cite{ConigBook}, 
it can be easily rephrased in purely equational terms. 
(By contrast, we do not currently know whether the class $\QNmaxb$ is equationally definable.)
The picture, however, simplifies in the prelinear case, for one can show that, under the prelinearity assumption,
the two definitions are equivalent,
i.e., $\ce = \cb$ (entailing, in particular,  that the class $\PQNmaxb$ is also equational). 
We shall therefore no longer consider {$\mathsf{maxE}$ operators}
in the present setting, but we refer the reader to~\cite{ConigBook} for further details,
including  necessary and sufficient conditions for having $\ce = \cb$.
\end{remark}

Observe that the same argument presented in Remark~\ref{remMaxNonSemilinear} for $\bu$ also  applies to the cases of $\PQNmax$ and $\PQNmaxb$.   
%
In other words, 
the varieties obtained as expansions of $\mathsf{NM}$ by either a $\mathsf{max}$, or a $\mathsf{maxB}$  consistency operator -- 
which are proper subvarieties of (resp.) $\PQNmax_{\sf m}$ and $\PQNmaxb$ -- 
are not generated by their totally ordered members and hence  are not semilinear. In consequence,  $\PQNmax_{\sf m}$ and $\PQNmaxb$ are not semilinear either.

%
\begin{figure}
\begin{center}
 \begin{minipage}{4cm}
\includegraphics[width=3cm]{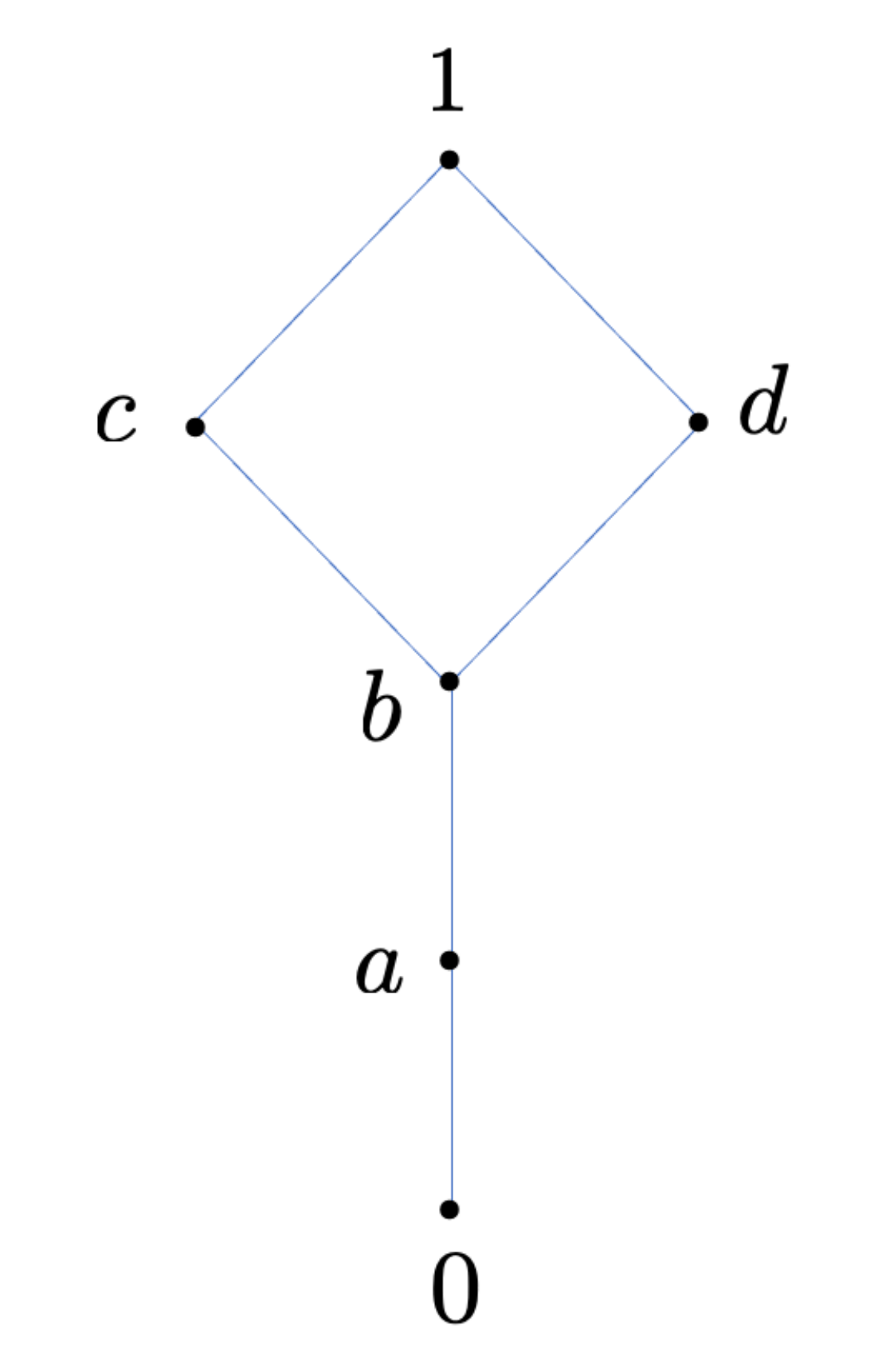}
\end{minipage}
\begin{minipage}{3cm}
\begin{tabular}{c||c|c}
 & $\neg$ & $\circ$  \\
\hline\hline
0 & 1 & 1 \\
\hline
a & d  & 0  \\
\hline
b & a  & 0  \\
\hline
c & 0 & 1  \\
\hline
d & a & 0  \\
\hline
1 & 0 & 1  \\
\end{tabular} 
\end{minipage}
\end{center}
\vspace{0,3cm}

\begin{tabular}{c||c|c|c|c|c|c}
$*$ & 0 & a & b & c & d & 1 \\
\hline\hline
0 & 0 & 0 & 0 & 0 & 0 & 0 \\
\hline
a & 0 & 0 & 0 & a & 0 & a \\
\hline
b & 0 & 0 & b & b & b & b \\
\hline
c & 0 & a & b & c & b & c \\
\hline
d & 0 & 0 &b & b & d & d \\
\hline
1 & 0 & a & b & c & d & 1 \\
\end{tabular}
\hspace{1cm}
\begin{tabular}{c||c|c|c|c|c|c}
$\to$ & 0 & a & b & c & d & 1 \\
\hline\hline
0 & 1 & 1 & 1 & 1 & 1 & 1 \\
\hline
a & d & 1 & 1 & 1 & 1 & 1 \\
\hline
b & a & a & 1 & 1 & 1 & 1 \\
\hline
c & 0 & a & d & 1 & d & 1 \\
\hline
d & a & a &c & c & 1 & 1 \\
\hline
1 & 0 & a & b & c & d & 1 \\
\end{tabular} 
\caption{Hasse diagram and operation tables of the $\PQNbmax$-algebra $({\bf A}_6,\circ)$. }\label{figCounterExample}
\end{figure}
Notice that the case of $\PQNbmax$ is not covered by the above argument and hence let us start by showing that $\PQNbmax$ is not semilinear. The proof of this fact uses the algebra whose Hasse diagram is shown in Figure~\ref{figCounterExample} that for a later use we will denote by  $\langle {\bf A}_6,\circ\rangle$.

%
In fact, $\langle{\bf A}_6,\circ\rangle$ is not totally ordered but subdirectly irreducible.  
Indeed, a direct inspection on the definition of its operations shows that the only congruences of $\langle{\bf A}_6,\circ\rangle$ are the identity and $\Theta_c=\{(x,y)\mid (x\to y)\wedge (y\to x)\geq c\}$, the latter being the unique atom of the lattice of congruences of $\langle {\bf A}_6,\circ\rangle$. Therefore,
we have that the variety $\PQNbmax$ is not semilinear, while $\PQNbmin$ is semilinear by Theorem~\ref{thm:PQNminSemiLin}.  
As it will be clear in a short while, the discrepancy between the semilinearity of $\PQNbmin$ and the failure of semilinearity for $\PQNbmax$ in part depends on the behavior of the non-involutive negation of $\mathsf{PQN}$-algebras. Indeed, by requiring the negation to be involutive, one would immediately recover the semilinearity of the corresponding variety expanded with a consistency operator $\circ$. However, involutivity is a sufficient but not necessary condition to identify a semilinear subvariety of $\PQNbmax$, as we are going to show.
%


To begin with, let us observe that outside the $\mathsf{IMTL}$-setting, where a major role in the description of the generators of the varieties $\mathsf{IMTL}^\circ$, $\mathsf{IMTL}^\circ_\mathsf{Bmax}$ and $\mathsf{IMTL}^\circ_\mathsf{maxB}$ is played by the Boolean elements $B({\bf A})$ of $\langle {\bf A}, \circ\rangle$ (see, e.g.,~\cite{CEG14}), it is convenient to consider the  explosive elements instead. 
Recall that these are defined by (\ref{eq:explosive}) as follows: 
$$
E({\bf A})=\{x\in A: x\wedge\nnot x=0\}.
$$
As we already observed, $B({\bf A})\subseteq E({\bf A})$ in general, while $E({\bf A})=B({\bf A})$ if ${\bf A}$ has an involutive negation. Compare the next result with the above Lemma \ref{lemmaBA} that is specific for the $\mathsf{Bmin}$ operator $\bu$.

\begin{lemma}\label{lemmaEA}
 Let $\langle {\bf A}, \circ\rangle$ be either in $\PQNbmax$ or $\PQNmaxb$ and
 such that $E({\bf A})=\{0,1\}$. Then ${\bf A}$ is totally ordered.
\end{lemma}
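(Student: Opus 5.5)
We argue by contradiction, mirroring the proof of Lemma~\ref{lemmaBA} but working on the ``consistency side''. Suppose $E(\A)=\{0,1\}$ and $\A$ is not a chain. Then there are $a,b\in A$ with $c:=a\to b\neq 1$ and $d:=b\to a\neq 1$. By prelinearity $c\vee d=1$. As in Lemma~\ref{lemmaBA}, $c,d>0$, since $c=0$ would force $d=1$, and symmetrically for $d$. Hence $c,d\notin\{0,1\}=E(\A)$, so $c\wedge\nnot c\neq 0$ and $d\wedge\nnot d\neq 0$.

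The key step is to produce a nonzero element below $\circ c$, or below $\cb c$, that is incompatible with the Boolean, hence explosive, nature of the operator's values. The natural candidate is $y:=\nnot\nnot d$, or simply $y:=d$.

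\begin{itemize}
\item First, $c\wedge d$ should be small. From $c\vee d=1$ and distributivity, $\nnot c\wedge\nnot d=\nnot(c\vee d)=0$. Also $c\ast d\le (a\to b)\ast(b\to a)$.
\item What I actually need is $c\wedge\nnot c\wedge y=0$ for a suitable $y\neq 0$ that is Boolean, or at least explosive. Note that $\nnot c\wedge\nnot d=0$. Taking $y$ to be an element that forces $\nnot d$, the most promising choice is $y:=\nnot\nnot d$. Then $\nnot c\wedge\nnot\nnot d\le\nnot c\wedge\nnot\nnot d$, and I want this to be $0$. Using $\nnot c\le \nnot\nnot d$? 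From $c\vee d=1$ we get $\nnot d\le c$ in integral residuated lattices only with involutivity, so instead I use $\nnot c\wedge\nnot d=0$, which gives $\nnot c\le\nnot\nnot d$ via $\nnot c\ast\nnot d\le\nnot c\wedge\nnot d=0$. This is the wrong direction for making the meet zero.
\end{itemize}

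So the more robust route is to apply the defining property with $y:=\circ c$ itself and exploit explosiveness of values. In both $\PQNbmax$ and $\PQNmaxb$, $\circ c$ (resp.\ $\cb c$) is Boolean, hence lies in $E(\A)=\{0,1\}$, so $\circ c\in\{0,1\}$. Since $c\wedge\nnot c\wedge\circ c=0$ and $c\wedge\nnot c\neq 0$, we get $\circ c=0$. Thus every $y$ with $c\wedge\nnot c\wedge y=0$ (and, in the $\mathsf{maxB}$ case, $y$ Boolean) satisfies $y\le 0$. For $\mathsf{Bmax}$ this means $c\wedge\nnot c\wedge y=0$ implies $y=0$. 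For $\mathsf{maxB}$ it restricts only Boolean $y$, which are $0,1$, so we only learn that $c\wedge\nnot c\neq 0$, already known.

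Hence the real content must come from the structure of $\PQNA$, a WNM-algebra, and this is the step I expect to be the main obstacle. In the $\mathsf{Bmax}$ case I will exhibit a nonzero $y$ with $c\wedge\nnot c\wedge y=0$, and the candidate is $y:=d\wedge\nnot d$ or $y:=\nnot c$-related terms. Using the WNM axiom $\nnot(x\ast z)\vee((x\wedge z)\to(x\ast z))=1$ and prelinearity, I would work in a subdirect representation by chains: in each chain factor either $c=1$ or $d=1$. In factors where $c=1$ we have $c\wedge\nnot c=0$; in factors where $d=1$ we have $d\wedge\nnot d=0$. So $(c\wedge\nnot c)\wedge(d\wedge\nnot d)=0$ componentwise, and therefore $y:=d\wedge\nnot d\neq 0$ satisfies $c\wedge\nnot c\wedge y=0$. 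This forces $y=0$, contradicting $d\notin E(\A)$.

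For $\mathsf{maxB}$ I would instead use the equivalence $\ce=\cb$ from Remark~\ref{rem:cecb}. Since $y=d\wedge\nnot d$ is not explosive, I would take $y:=\nnot(c\wedge\nnot c)$, or a term like $\nnot\nnot(d\wedge \nnot d)$, and check componentwise that it is explosive, satisfies the meet condition, and is nonzero. This gives $y\le\cb c=0$, a contradiction.
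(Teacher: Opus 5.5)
Your $\mathsf{maxB}$ case has a genuine gap, and the plan you sketch for it cannot be carried out. Condition (iii) of Definition~\ref{def:maxBco} bounds $\cb c$ from below only by elements $y$ that are Boolean. Its explosive variant from Remark~\ref{rem:cecb} does the same with explosive $y$. Under the hypothesis $E(\mathbf{A})=\{0,1\}$, the only such elements are $0$ and $1$. A nonzero admissible $y$ must therefore be $1$, and then $c\wedge\nnot c\wedge y=0$ says exactly that $c\in E(\mathbf{A})$, which you know is false. You noticed this yourself a few lines earlier. Accordingly, both of your candidates fail. First, $\nnot(c\wedge\nnot c)\geq\nnot c$ violates the meet condition. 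Second, $\nnot\nnot(d\wedge\nnot d)$ is nonzero and satisfies the meet condition, but it lies below $\nnot d<1$, so it is not in $\{0,1\}$ and hence not explosive.

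No other choice can do better, because the $\mathsf{maxB}$ half of the statement is false. Let $\mathbf{N}_4$ be the nilpotent minimum chain $0<a<b<1$ with $\nnot a=b$ and $\nnot b=a$. Let $\mathbf{A}$ be the subalgebra of $\mathbf{N}_4\times\mathbf{N}_4$ generated by $(1,b)$ and $(b,1)$; its universe is $\{(0,0),(a,0),(0,a),(a,a),(b,b),(1,b),(b,1),(1,1)\}$.
\begin{itemize}
\item $\mathbf{A}$ is an $\mathsf{NM}$-algebra, hence in $\PQNA$, and it is not totally ordered.
\item $\nnot$ swaps $(1,b)\leftrightarrow(0,a)$, $(b,1)\leftrightarrow(a,0)$ and $(b,b)\leftrightarrow(a,a)$. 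So $x\wedge\nnot x\neq0$ for all six elements other than $0,1$, and $E(\mathbf{A})=\{0,1\}$.
\item Setting $\cb x=1$ for $x\in\{0,1\}$ and $\cb x=0$ otherwise satisfies Definition~\ref{def:maxBco}.
\end{itemize}
So $\langle\mathbf{A},\cb\rangle\in\PQNmaxb$ is a counterexample, in line with Remark~\ref{remMaxNonSemilinear}. Since $\mathbf{A}$ is involutive, it even lies in $\PQNmaxbi$, which bears on Corollary~\ref{corAIPQNsemilin} as well.

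Your $\mathsf{Bmax}$ argument is correct and essentially the paper's. Both produce a nonzero $y$ with $c\wedge\nnot c\wedge y=0$ and play it against $\circ c\in B(\mathbf{A})\subseteq\{0,1\}$. The paper takes $y=\nnot d$ and concludes $\circ c=1$, whence $c\in E(\mathbf{A})$. You first get $\circ c=0$ and then take $y=d\wedge\nnot d$. The subdirect representation is unnecessary: the identity $\nnot c\wedge\nnot d=\nnot(c\vee d)=0$, which you had already written down, gives $c\wedge\nnot c\wedge y=0$ for either choice of $y$.

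The paper's own proof also only covers this case. Its step $\circ c\geq\nnot d$ applies maximality to $\nnot d$, which is not Boolean since $0<\nnot d<1$. That is licensed for a $\mathsf{Bmax}$ operator but not for $\cb$.
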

\begin{proof}
    Suppose $E({\bf A})=\{0,1\}$ and assume, by way of contradiction, that ${\bf A}$ is not totally ordered. 

    Then there are elements $a,b \in A$ such that 
$a \imp b \neq 1 \neq b \imp a$. Let $c : = a \imp b $ and $d: = b \imp a$. Thus $c \neq 1 \neq d$. By the prelinearity equation, we have $c \lor d = 1$. It follows then that both $c \neq 0$ and $d \neq 0$, since otherwise it would imply that either $c \lor d = c < 1$ (if $c = 0$) or $c \lor d = d < 1$ (if $d = 0$).  Therefore, $c, d \notin E(\A)$.

In consequence, $c\wedge\nnot c\neq 0$ and $d\wedge\nnot d \neq 0$, and hence $c, \nnot c, d, \nnot d>0$.
    Now, $c\wedge\nnot c\wedge\nnot d\leq \nnot c\wedge \nnot d=\nnot(c\vee d)=\nnot 1=0$, so $c\wedge\nnot c\wedge\nnot d=0$. By the definition of $\circ$ we have $\circ c\geq \nnot d>0$.
   Since $\circ c \in B({\bf A})\subseteq E({\bf A})=\{0,1\}$, but 
     $\circ c \neq 0$, one has that $\circ c=1$. 
    Hence, $0 = c \land \nnot c \land \circ c = c \land \nnot c$, that is,
    $c\in E({\bf A})$, contradicting what we have shown above. 
    We conclude that ${\bf A}$ is totally ordered.
\end{proof}
As we recalled above, in the involutive case $\PQNbmax$ and $\PQNmaxb$ algebras $\langle{\bf A},\circ\rangle$ are totally ordered  iff $B({\bf A})=\{0,1\}$, while the above result points out the role of $E({\bf A})$ in this more general setting. As a matter of fact,  
the following example shows a non-totally ordered algebra $\langle \Al,\circ\rangle$ in $\PQNbmax$ such that $B(\Al)=\{0,1\}$. 

    \begin{figure}
        \centering
        \includegraphics[width=0.5\linewidth]{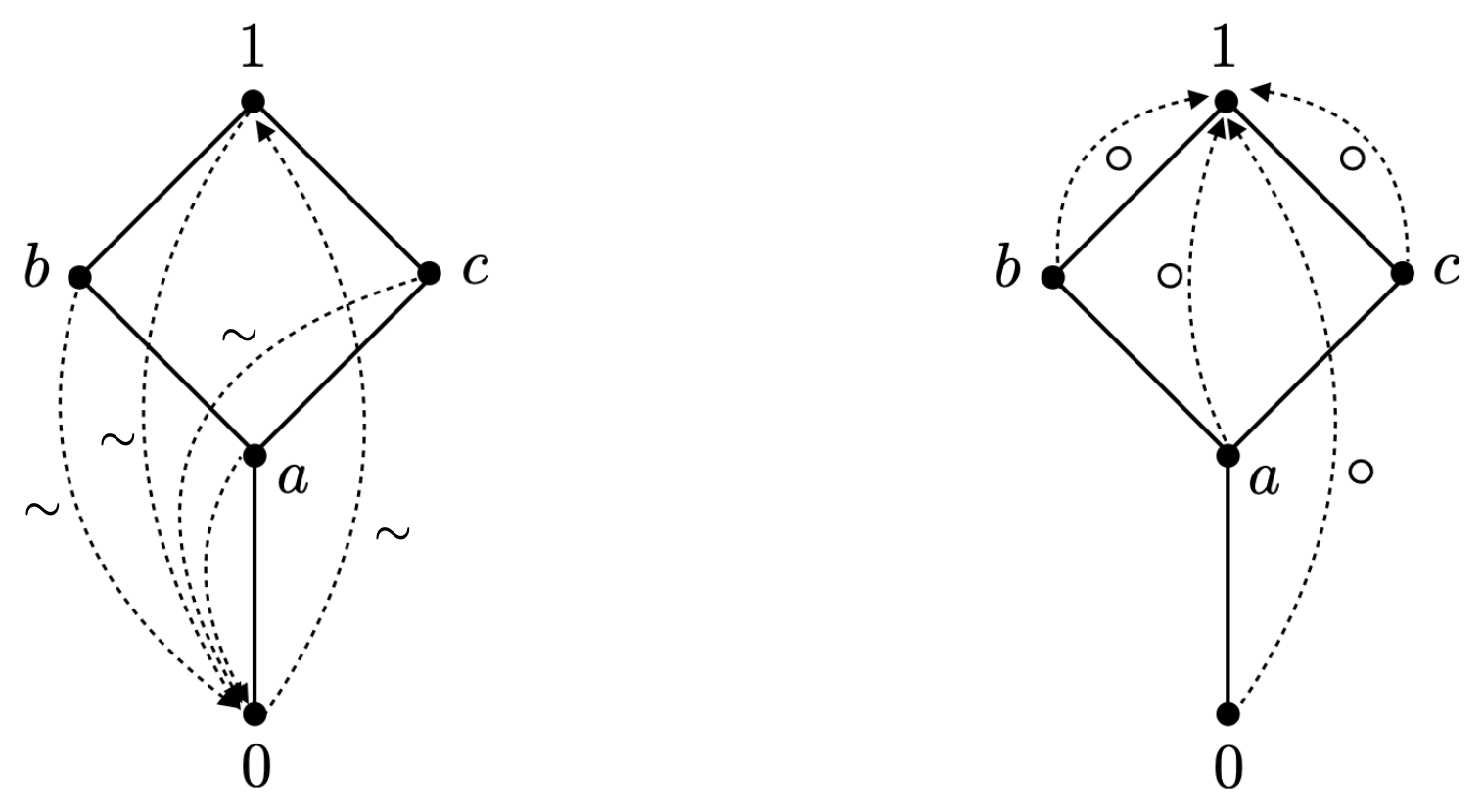}
        \caption{The directly indecomposable G\"odel algebra ${\bf G}_5$ with its negation (on the left-hand side) and the $\mathsf{Bmax}$ consistency operator considered in Example \ref{exG5}.}
        \label{figG5}
    \end{figure}
    
\begin{example}\label{exG5}
Since G\"{o}del algebras form a (proper) subvariety of $\PQNA$~\cite{FlaRi}, 
the (directly indecomposable) five-element G\"{o}del algebra ${\bf G}_5$ 
displayed in Figure~\ref{figG5} provides an example of interest.
%
As in every directly indecomposable G\"odel algebra, the negation of ${\bf G}_5$ maps to $0$ to $1$ and all the other elements 
to $0$ (see the left-hand side of Figure \ref{figG5}). Thus, $E({\bf G}_5)=G_5$. 

The operator  $\circ: G_5\to G_5$ such that
$
\circ a=1\mbox{ for all }a\in G_5
$
(right-hand side of Figure \ref{figG5}) 
is a $\mathsf{Bmax}$ consistency operator (cf.~Remark~\ref{rem:pseudocowithball}).  We note that, on a logical level, this obviously suggests that G\"odel (or Heyting) algebras
are not suitable semantic models for LFIs, for even their degree-preserving logic is explosive.

Now, $\langle {\bf G}_5, \circ\rangle$ is a non-totally ordered algebra in $\PQNbmax$ 
such that $B({\bf G}_5)=\{0,1\}$. Notice, however, that this does not provide a counterexample to
Lemma~\ref{lemmaEA},
for $E({\bf {\bf G}_5 }) \neq \{ 0, 1 \}$.
\end{example}

In general, the converse of Lemma~\ref{lemmaEA} does not hold.
Indeed, note that on any totally ordered algebra ${\bf A}$, necessarily $B({\bf A})=\{0,1\}$; however, 
even in such a case one may have
$E({\bf A})\neq \{0,1\}$. This is for instance the case of some of the standard $\mathsf{PQN}$-chains ${\bf S}_n$ studied in~\cite{FlaRi}. In fact, consider a  $\mathsf{PQN}$-algebra  on the real unit interval $[0,1]$, for which there exists an element $a\in(0,1)$ such that 
(recall Figure \ref{figNeg}):
$$
E({\bf S}_n)=\{0\}\cup[a, 1].
$$

On a $\mathsf{PQN}$-chain $\Al$ (more generally, if $0$ is meet-irreducible) there is only one way to define a max-consistency operator (that is, at the same time,  also a $\mathsf{maxB}$ and a $\mathsf{Bmax}$ operator): that is, for every $a\in A$,
\begin{eqnarray}\label{maxconschain}
\hat\circ a=\left\{
\begin{array}{ll}
1&\mbox{ if }a\in  E(\Al)\\
0&\mbox{ otherwise.}
\end{array}
\right.
\end{eqnarray}


That is, the elements that $\hat\circ$ deems consistent are precisely the explosive ones. 
Furthermore, let us  observe that the directly indecomposable members of  $\PQNbmax$ are indeed those $\langle \Al, \circ\rangle$ such that $B(\Al)=\{0,1\}$. Observe that the same proof of \cite[Theorem 3.8]{dIRL}, showing the next claim in the case of involutive algebras, works in this slightly more general case  (for the involutivity of $\Al$ is never used in the proof).
\begin{lemma}\label{LemmaDireIndBmax}
Let $\langle \Al,\circ \rangle$ be any $\mathsf{PQN}$-algebra with a consistency operator. Then $\langle \Al,\circ \rangle$ is directly indecomposable iff $B(\Al)=\{0,1\}$. 
\end{lemma}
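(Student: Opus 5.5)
We prove the lemma through the standard correspondence between direct decompositions and complemented elements, checking that the consistency operator is respected. Throughout, ``consistency operator'' means any of the max, $\mathsf{Bmax}$ or $\mathsf{maxB}$ operators. The plan is to show that factor congruences of $\langle \Al,\circ\rangle$ are exactly those of the underlying $\mathsf{PQN}$-algebra $\Al$, which correspond to elements of $B(\Al)$.

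\emph{Left-to-right.} Suppose $B(\Al)\neq\{0,1\}$ and pick $e\in B(\Al)\setminus\{0,1\}$. In a bounded commutative integral residuated lattice, a complemented element $e$ is idempotent and $\nnot e$ is its complement. It induces a direct decomposition $\Al\cong \Al/\theta_e\times\Al/\theta_{\nnot e}$, where $\theta_e$ is the congruence generated by $(e,1)$, i.e.\ $x\mapsto (x\lor \nnot e, x\lor e)$, equivalently the intervals $[0,e]$ and $[0,\nnot e]$ via $x\mapsto(x\land e,x\land\nnot e)$.

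The key step is to show that $\theta_e$ and $\theta_{\nnot e}$ are also congruences of $\langle\Al,\circ\rangle$. Since the operator is defined by maximality, it suffices to show that on each factor there is a consistency operator of the same kind, and that the product of these operators equals $\circ$ by uniqueness. Set $\circ_1 a := \circ a\land e$ on $[0,e]$ (with $e$ as top and $\nnot_e a = \nnot a\land e$). Then verify:
\begin{itemize}
\item the explosion condition $a\land\nnot_e a\land y=0$ in the factor holds iff $(a\lor\nnot e)\land\nnot(a\lor\nnot e)\land y=0$ in $\Al$, using distributivity and $\nnot(a\lor \nnot e)=\nnot a\land e$;
\item maximality transfers, and Booleanness is preserved by meeting with $e$.
\end{itemize}
Because the operator is determined by the lattice structure, the product operator coincides with $\circ$. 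Hence $\langle\Al,\circ\rangle$ decomposes nontrivially, with both factors nontrivial since $0<e<1$.

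\emph{Right-to-left.} Any direct decomposition of $\langle\Al,\circ\rangle$ is in particular a direct decomposition of the reduct $\Al$. So the pair $(0,1)$ in the product gives a complemented element $e$ with $e\lor\nnot e=1$, and a nontrivial decomposition gives $e\notin\{0,1\}$. Thus $B(\Al)=\{0,1\}$ forces direct indecomposability.

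\emph{Expected obstacle.} The main difficulty is verifying that $\circ$ is compatible with $\theta_e$, i.e.\ $\circ x\land e=\circ(x\land e)\land e$ (or the analogous identity with $x\lor\nnot e$). For the max operator this follows from the equational form (i)--(iii) together with distributivity. For $\mathsf{maxB}$ one must additionally check that the maximal Boolean element below the factor corresponds, which uses that Boolean elements of $[0,e]$ are exactly $b\land e$ with $b\in B(\Al)$. As in \cite[Theorem 3.8]{dIRL}, involutivity is not used anywhere in this argument.
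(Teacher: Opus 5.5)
Your proposal is correct and is essentially the argument the paper delegates to \cite[Theorem 3.8]{dIRL}: a nontrivial direct decomposition of the reduct corresponds to a Boolean element $e\notin\{0,1\}$, and the splitting $x\mapsto(x\land e,x\land\nnot e)$ lifts to $\langle\Al,\circ\rangle$ because $\circ a\land e$ is an operator of the same kind on $[0,e]$ (equivalently, $\circ x\land e=\circ(x\land e)\land e$), so the two sides agree by uniqueness of the operator. As you note, involutivity is never used; in fact your argument does not use prelinearity either.
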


Now we can identify a group of equivalent conditions 
that will allow us to recover  semilinearity for 
not necessarily involutive  subvarieties of $\PQNbmax$ and $\PQNmaxb$. Details will be given after the proof of the next result, that indeed holds in the more general setting of (not necessarily prelinear)  quasi-Nelson  algebras. 

\begin{lemma} \label{lem:lemexplo} For a quasi-Nelson algebra $\A$ endowed with a $\max$-consistency operator, the following conditions are equivalent:
\begin{enumerate}[(i)]
    \item\label{1} $E(\A) = B(\A)$.
    \item\label{2} $\Al$ satisfies the quasi-equation: $\nnot x = 0$ implies $x = 1$.
        \item\label{4} $\Al$ satisfies $x \lor \nnot x \lor \nnot \circ x = 1$.
\end{enumerate}
\end{lemma}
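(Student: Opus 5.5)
The plan is to prove the cycle (\ref{1})$\Rightarrow$(\ref{2})$\Rightarrow$(\ref{1}), then (\ref{2})$\Rightarrow$(\ref{4})$\Rightarrow$(\ref{2}). The technical tool throughout is the following consequence of~\eqref{eq:Nelson}, which I isolate first.

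\emph{Auxiliary claim.} If $u, y \in A$ satisfy $u^2 = 0$ and $(\nnot y)^2 * u = 0$, then $u \leq y$.

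Proof of the claim: $u^2 = 0$ gives $u^2 \imp y = 1$. The hypothesis $(\nnot y)^2 * u = 0$ gives, by residuation, $(\nnot y)^2 \leq \nnot u$, that is, $(\nnot y)^2 \imp \nnot u = 1$. Then~\eqref{eq:Nelson} yields $u \imp y = 1$.

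\textbf{(\ref{1})$\Rightarrow$(\ref{2}).} Let $\nnot a = 0$. Then $a \land \nnot a = 0$, so $a \in E(\A) = B(\A)$. Hence $1 = a \lor \nnot a = a$.

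\textbf{(\ref{2})$\Rightarrow$(\ref{1}).} Recall that $B(\A) \subseteq E(\A)$ always holds, so only the inclusion $E(\A) \subseteq B(\A)$ needs proof. Let $a \land \nnot a = 0$ and put $c := \nnot(a \lor \nnot a) = \nnot a \land \nnot\nnot a$.

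1. Since $\nnot a * \nnot\nnot a = 0$, we get $c^2 = 0$.
2. Since $c \leq \nnot\nnot a$, we get $(\nnot a)^2 * c \leq \nnot a * \nnot\nnot a = 0$.
3. The auxiliary claim (with $y = a$) then gives $c \leq a$. So $c \leq a \land \nnot a = 0$.
4. Thus $\nnot(a \lor \nnot a) = 0$, and (\ref{2}) gives $a \lor \nnot a = 1$, i.e.\ $a \in B(\A)$.

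\textbf{(\ref{2})$\Rightarrow$(\ref{4}).} Fix $a$ and let $d := a \lor \nnot a \lor \nnot\circ a$. By (\ref{2}) it suffices to show $\nnot d = 0$. Put $e := \nnot d = \nnot a \land \nnot\nnot a \land \nnot\nnot\circ a$.

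1. As in the previous step, $e^2 = 0$ and $(\nnot a)^2 * e = 0$, so the claim gives $e \leq a$. Hence $e \leq a \land \nnot a$.
2. From $a \land \nnot a \land \circ a = 0$ we get $(a \land \nnot a) * \circ a = 0$, so $a \land \nnot a \leq \nnot\circ a$. Therefore $e \leq \nnot\circ a \land \nnot\nnot\circ a$.
3. This gives $(\nnot\circ a)^2 * e \leq \nnot\circ a * \nnot\nnot\circ a = 0$. A second application of the claim (with $y = \circ a$) yields $e \leq \circ a$.
4. So $e \leq a \land \nnot a \land \circ a = 0$.

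This step is the main obstacle: the point is to apply the Nelson-based claim twice, once with $y = a$ and once with $y = \circ a$. Note that it uses only the defining equation (ii) of $\circ$ and not (\ref{2}), except in the final appeal to (\ref{2}) to conclude $d = 1$.

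\textbf{(\ref{4})$\Rightarrow$(\ref{2}).} Let $\nnot a = 0$. Then $a \land \nnot a \land 1 = 0$, so by the definition of a $\mathsf{max}$-consistency operator $1 \leq \circ a$, i.e.\ $\circ a = 1$ and $\nnot\circ a = 0$. By (\ref{4}), $1 = a \lor 0 \lor 0 = a$.
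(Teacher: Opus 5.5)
Your proof is correct and follows the same route as the paper. Both prove (i)$\Leftrightarrow$(ii) and (ii)$\Leftrightarrow$(iii), reducing each nontrivial direction to showing that some $\nnot(\cdots)$ equals $0$ and then applying the quasi-equation; the paper simply asserts $a \land \nnot a = \nnot a \land \nnot\nnot a$ and $\nnot\nnot\circ a = \circ a$, whereas your auxiliary claim derives exactly these steps from the Nelson equation, so your version is the more self-contained of the two.
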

\begin{proof}
    (\ref{1})$\Leftrightarrow$(\ref{2}). Suppose (\ref{1}) holds, and assume $\nnot a = 0$ for some $a \in A$.
    Then $a \land \nnot a = 0$, so $a \in  E(\A) = B(\A)$.
    Hence, $a = a \lor \nnot a = 1$, as required. Conversely, assume (\ref{2}) holds. As observed earlier, the inclusion $B(\A) \subseteq E(\A)$ always holds on a quasi-Nelson algebra. For the other inclusion, assume $a \in E(\A)$, that is, $a \land \nnot a = 0$. Since 
    $a \land \nnot a = \nnot a \land \nnot \nnot a = \nnot (a \lor \nnot a)$, we can apply (\ref{2}) to obtain $a \lor \nnot a =1$.
    Hence, $a \in B(\A)$, as required. 


(\ref{2})$\Leftrightarrow$(\ref{4}). Assuming (\ref{2}) holds, let $a \in A$. 
We have 
$\nnot (a \lor \nnot a \lor \nnot \circ a ) 
= \nnot a \land \nnot  \nnot a \land \nnot \nnot \circ a  
=  \nnot a \land a \land \circ a  = 0$. Hence, 
applying (\ref{2}), 
we obtain 
$a \lor \nnot a \lor \nnot \circ a = 1$.
Conversely,
Let $a \in A$ be such that
$\nnot a = 0$. Then $a \land \nnot a = 0$, so $\circ a = 1$. 
By (\ref{4}), we have 
$1 = a \lor \nnot a \lor \nnot \circ a = a \lor 0 \lor \nnot 1 = a$, as required.
%
\end{proof}

\begin{remark}
\label{rem:pseudocowithball}
If a 
quasi-Nelson algebra $\Al$ satisfies the equation $x \land \nnot x = 0$, then its unique 
(Boolean) $\max$-consistency operator $\circ$ is given by $\circ a = 1$ for all $a \in A$. (The converse is also true:
$\circ x = 1$ 
entails $x \land \nnot x = 0$.) Such an algebra will also trivially satisfy the quasi-equation:
${(x \leftrightarrow y)\lor z = 1} $ implies $ {(\circ x \leftrightarrow \circ y)\lor z = 1} 
$, simply because the consequent is always true. On the other hand, it is easy on $\Al$ to falsify 
(e.g.~the first 
item of) Lemma~\ref{lem:lemexplo}: indeed, any Heyting algebra that is not Boolean must falsify it (see for instance Example~\ref{exG5}, which is built on a 5-element G\"odel algebra).
\end{remark}

The main property that will be used in the remaining part of this section is the equivalence between items
(\ref{2}) and 
(\ref{4}) of Lemma~\ref{lem:lemexplo}. To appreciate the effect of the quasi-equation in (\ref{2}) on $\mathsf{PQN}$-algebras 
(which does not mention any recovery operator), observe that if a standard $\mathsf{PQN}$-algebra ${\bf S}_n$ satisfies it, then the unique element $a\in (0,1]$ such that $n(b)=0$ for all $b\in [a,1]$ is  $a=1$. This is  the reason why $E({\bf S}_n)=B({\bf S}_n)$, as ensured by (\ref{1}) of Lemma~\ref{LemmaDireIndBmax}. Thus, the negations of these standard $\mathsf{PQN}$-algebras have an involutive behavior in a left interval of $1$. A negation of that kind is, for instance, the one on Figure \ref{figNewNeg}. Standard $\mathsf{PQN}$-algebras having those negations will be called {\em almost involutive}. 

Let us denote by $\mathsf{ai}\PQNA$ the sub-quasi-variety of $\PQNA$ 
that satisfies the quasi-equation 
in Lemma~\ref{lem:lemexplo} (ii). 
%
%
%
For algebras in  any among $\PQNmax_{\sf m}$, $\PQNmaxb$, $\PQNbmax$ and $\PQNmaxe$,
this quasi-equation is equivalent to the equation $x \lor \nnot x \lor {\nnot} {\circ} x = 1$. 
%
Thus, for any $\mathsf{V}$ among these varieties, we may consider its 
``almost involutive''
sub-variety $\mathsf{aiV}$ 
obtained by adding the  
equation in Lemma~\ref{lem:lemexplo} (iii).

\begin{figure}
        \centering
        \includegraphics[width=0.5\linewidth]{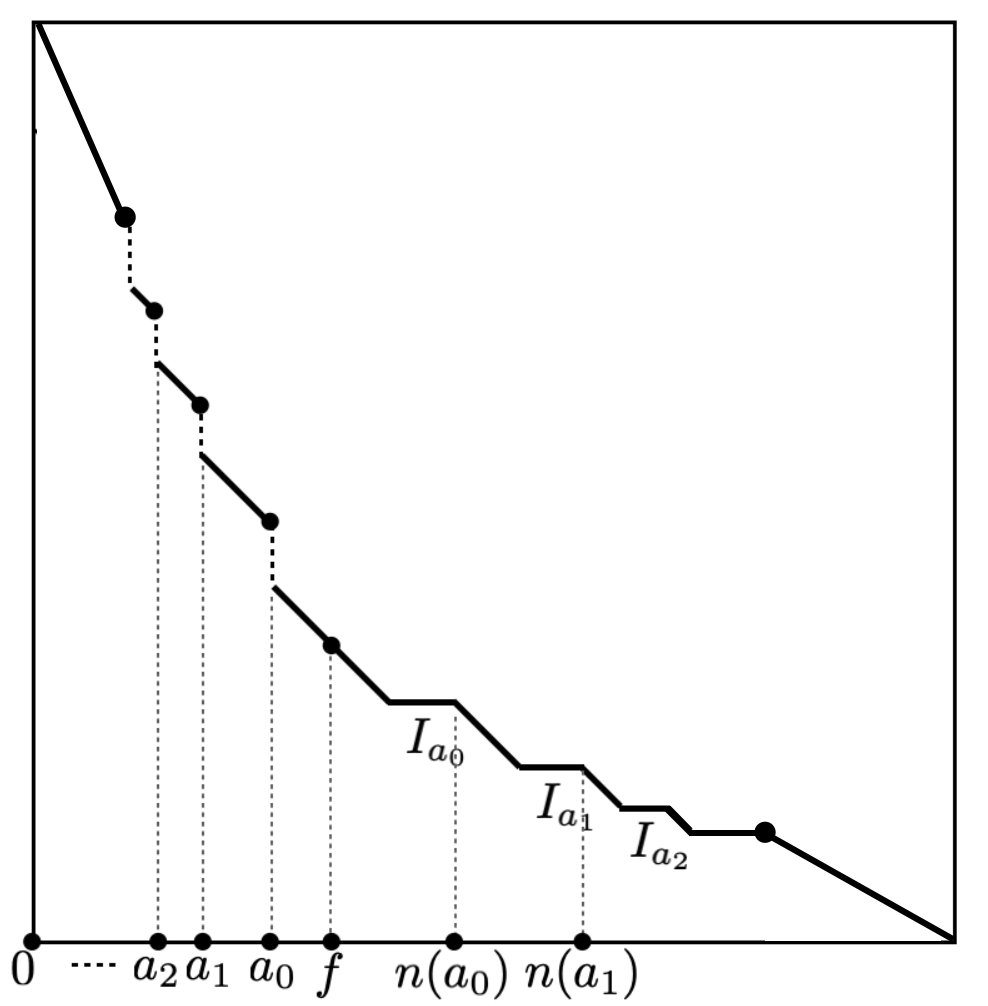}
        \caption{A negation function $n$ that determines the generic almost involutive $\mathsf{PQN}$-algebra ${\bf S}_n$. Notice that the set $E({\bf S}_n)$ of explosive elements of ${\bf S}_n$ is trivial and it coincides with $B({\bf S}_n)=\{0,1\}$.}
        \label{figNewNeg}
    \end{figure}


\begin{corollary}\label{corAIPQNsemilin}
    The varieties $\PQNbmaxi$ and $\PQNmaxbi$ are generated by their totally ordered members (i.e.~are semilinear). In fact, it holds that $\PQNbmaxi=\PQNmaxbi=\mathsf{ai}\PQNmaxe$.
\end{corollary}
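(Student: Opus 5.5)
The plan is to establish semilinearity through subdirectly irreducible members: since $\PQNbmaxi$ and $\PQNmaxbi$ are varieties (an equational class, by Lemma~\ref{lem:lemexplo}(iii) added to the equational presentation of $\PQNbmax$, resp.\ $\PQNmaxb$, the latter being equational via $\ce=\cb$ as in Remark~\ref{rem:cecb}), it suffices to show that every subdirectly irreducible, or more generally every directly indecomposable, member is a chain.

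First, take $\langle\Al,\circ\rangle$ directly indecomposable in either variety. By Lemma~\ref{LemmaDireIndBmax} (whose proof does not use involutivity, and which applies verbatim to the $\mathsf{maxB}$ case since only the existence of a consistency operator defined from the lattice structure is used), we get $B(\Al)=\{0,1\}$. Since $\Al$ satisfies $x\lor\nnot x\lor\nnot\circ x=1$, Lemma~\ref{lem:lemexplo} gives $E(\Al)=B(\Al)=\{0,1\}$. For the $\mathsf{maxB}$ case we need Lemma~\ref{lem:lemexplo}, stated for $\max$ operators, to apply. I will either note that the proof of (ii)$\Leftrightarrow$(iii) only uses $x\land\nnot x\land\circ x=0$ and ``$x\land\nnot x=0$ implies $\circ x=1$'', both of which hold for $\cb$ (take $y=1$ in Definition~\ref{def:maxBco}(iii)), or simply reuse that short computation. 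Then Lemma~\ref{lemmaEA} yields that $\Al$ is totally ordered. Since subdirectly irreducibles are directly indecomposable, every s.i.\ member is a chain, so both varieties are generated by their chains.

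For the equalities, the key observation is that on any $\mathsf{PQN}$-chain there is exactly one max-consistency operator, namely $\hat\circ$ of \eqref{maxconschain}. This operator is simultaneously $\mathsf{Bmax}$ and $\mathsf{maxB}$, and also $\mathsf{maxE}$, since its values lie in $\{0,1\}\subseteq B(\Al)\subseteq E(\Al)$ and it is maximal among all candidates. Hence the chains of $\PQNbmaxi$, $\PQNmaxbi$ and $\mathsf{ai}\PQNmaxe$ coincide: they are exactly the pairs $\langle\Al,\hat\circ\rangle$ with $\Al$ a $\mathsf{PQN}$-chain satisfying the almost-involutive quasi-equation. Two varieties generated by the same class of chains are equal, which settles $\PQNbmaxi=\PQNmaxbi$. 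For $\mathsf{ai}\PQNmaxe$, I would invoke the prelinear coincidence $\ce=\cb$ from Remark~\ref{rem:cecb}, which gives $\mathsf{ai}\PQNmaxe=\PQNmaxbi$ directly.

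The main obstacle I expect is verifying uniqueness of the operator on chains in the $\mathsf{maxB}$ and $\mathsf{maxE}$ readings. On a chain, $0$ is meet-irreducible, so $a\land\nnot a\land y=0$ forces $y=0$ whenever $a\notin E(\Al)$. It follows that $\circ a=0$ in that case, while $\circ a=1$ for $a\in E(\Al)$. This must be checked carefully against each definition, together with the claim that Lemma~\ref{LemmaDireIndBmax} covers the $\mathsf{maxB}$ quasivariety.
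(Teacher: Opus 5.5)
Your argument follows the paper's own proof step for step. Direct indecomposability gives $B(\mathbf{A})=\{0,1\}$ (Lemma~\ref{LemmaDireIndBmax}). Almost involutivity upgrades this to $E(\mathbf{A})=\{0,1\}$ (Lemma~\ref{lem:lemexplo}). Lemma~\ref{lemmaEA} then gives a chain, and uniqueness of $\hat\circ$ on chains gives the equalities.

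For $\PQNbmaxi$ this is sound. Your extra checks are also correct. The implications (iii)$\Rightarrow$(ii)$\Rightarrow$(i) of Lemma~\ref{lem:lemexplo} only need $\circ x=1$ whenever $x\land\nnot x=0$, and this holds for $\cb$. Likewise $\PQNmaxbi=\mathsf{ai}\PQNmaxe$ holds: once $E(\mathbf{A})=B(\mathbf{A})$, the $\mathsf{maxB}$ and $\mathsf{maxE}$ axioms literally coincide.

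The gap is the step you did not check: applying Lemma~\ref{lemmaEA} to a $\mathsf{maxB}$ operator. Its proof derives $\circ c\geq\nnot d$ from $c\land\nnot c\land\nnot d=0$, so it uses maximality over \emph{all} $y$. Definition~\ref{def:maxBco}(iii), however, only bounds Boolean $y$. In the situation of that lemma, $B(\mathbf{A})=E(\mathbf{A})=\{0,1\}$ while $0<\nnot d<1$, so $\nnot d$ is never Boolean. This cannot be patched, because the $\mathsf{maxB}$ half of the statement is false.

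Here is a counterexample. Let $\mathbf{N}_4$ be the nilpotent minimum chain $0<a<b<1$ with $\nnot a=b$ and $\nnot b=a$, so that $a*a=a*b=0$ and $b*b=b$.
\begin{itemize}
\item In $\mathbf{N}_4^2$, the elements $c=(1,b)$ and $d=(b,1)$ generate the eight-element NM-algebra $\mathbf{A}$ with universe $\{(0,0),(0,a),(a,0),(a,a),(b,b),(1,b),(b,1),(1,1)\}$.
\item $\mathbf{A}$ is involutive, hence almost involutive, and it is not totally ordered.
\item Every $x\notin\{0,1\}$ has $x\land\nnot x\neq 0$, so $E(\mathbf{A})=B(\mathbf{A})=\{0,1\}$.
\item Set $\cb x=1$ for $x\in\{0,1\}$ and $\cb x=0$ otherwise. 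This is a $\mathsf{maxB}$ operator satisfying $x\lor\nnot x\lor\nnot\cb x=1$, so $\langle\mathbf{A},\cb\rangle\in\PQNmaxbi$.
\item $\langle\mathbf{A},\cb\rangle$ is simple. Any filter $F\neq\{1\}$ contains some $x\neq 1$. Either $x=0$, or $\cb x=0$ and then $x\equiv_F 1$ forces $0=\cb x\equiv_F\cb 1=1$. In both cases $F=A$.
\end{itemize}
Being simple and not a chain, this algebra shows by J\'onsson's lemma (the variety is congruence distributive) that $\PQNmaxbi$ is not generated by its chains.

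It also does not lie in $\PQNbmaxi$. We have $c\land\nnot c\land\nnot d=(0,a)\land(a,0)=0$, yet $\nnot d=(a,0)\not\leq\cb c=0$. This is exactly where Lemma~\ref{lemmaEA} breaks. Hence $\PQNbmaxi\subsetneq\PQNmaxbi$.

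This agrees with the paper's own observation after Remark~\ref{rem:cecb} that $\mathsf{NM}$ expanded by $\mathsf{maxB}$ operators is not semilinear, since NM-algebras are almost involutive. The paper's proof of the corollary has the same gap.

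What your argument actually establishes is the following: $\PQNbmaxi$ is semilinear; $\PQNmaxbi=\mathsf{ai}\PQNmaxe$; and $\PQNbmaxi$ is the subvariety of $\PQNmaxbi$ generated by its chains.
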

\begin{proof} Let $\mathsf{aiV}$ stand for either $\PQNbmaxi$ or $\PQNmaxbi$. 
    By Lemma~\ref{lem:lemexplo}, for every $\langle {\bf A},\circ\rangle$ in $\mathsf{aiV}$ it holds that $E({\bf A})=B(\bf A)$ and therefore, by Lemma \ref{LemmaDireIndBmax}, $\langle {\bf A},\circ\rangle$ is directly indecomposable iff $E(A)=\{0,1\}$. Thus, by Lemma \ref{lemmaEA} if $\langle{\bf A},\circ\rangle$ is such that $E({\bf A})=\{0,1\}$,  ${\bf A}$ is totally ordered. Summing up, if $\langle {\bf A},\circ\rangle$ is directly indecomposable, then  ${\bf A}$ is totally ordered. Thus, $\mathsf{aiV}$ is generated by its totally ordered members.

Now, from what we observed above, there is only one way to define a max-consistency operator on chains, that also is a $\mathsf{maxB}$ and $\mathsf{Bmax}$. Thus, the varieties $\PQNbmaxi$ and $\PQNmaxbi$ are generated by the same chains and must therefore coincide.  Together with what we observed above, this observation gives us that $\PQNbmaxi=\PQNmaxbi$.
\end{proof}


\begin{remark}\label{remDelta} On  {\sf aiPQN}$^{\circ}_{\sf m}$-chains one can explicitly
define the Baaz 
operator $\Delta$
(given by $\Delta(1) = 1$ and $\Delta(x) = 0$ for $x \neq 1$; see \cite{Baaz96}).
Indeed, one can check that
$\Delta(a) := a \land \circ(a)$ works.  
For $a=1$, we have $\circ a=a=1$, hence $a\wedge\circ a=1$, while for $0 < a<1$ 
we have $\circ a=0$, so 
$a\wedge \circ a=0$; finally, for $a=0$, we obviously have $a\wedge \circ a=0$. 
Conversely, in the variety  {\sf aiPQN}$_{\Delta}$ of   {\sf aiPQN} algebras expanded with $\Delta$ (defined by the usual set of equations), the (only) {\sf max}-operator in a chain can be defined as $\circ(a) = \Delta(a \lor \neg a)$. Therefore, {\sf aiPQN}$^{\circ}_{\sf m}$-chains and  {\sf aiPQN}$_{\Delta}$ chains are term-equivalent, and thus  the variety generated by the class of {\sf aiPQN}$^{\circ}_{\sf m}$-chains coincides with the variety of {\sf aiPQN}$_{\Delta}$ algebras. Notice that for $\Delta$  to be definable on $\mathsf{PQN}$-chains, it is necessary that explosive and Boolean elements coincide. In fact, if $a$ not Boolean but $\circ a=1$, then $a\wedge \circ a=a\neq \Delta a=0$. In other words, condition (i) of Lemma \ref{lem:lemexplo} is key for the definition of $\Delta$ on $\PQNmax$ chains.
\end{remark}

Finally, adapting the argument 
of Theorem \ref{Sinfty}, we can obtain the following result.

\begin{theorem} \label{Sinfty2}
For every almost involutive $\mathsf{PQN}$-chain ${\bf S}_n$ that is generic for $\mathsf{ai}\PQNA$,  $\langle{\bf S}_n,\circ\rangle$ generates the variety $\PQNbmaxi=\PQNmaxbi$. 
\end{theorem}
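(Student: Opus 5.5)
Since $\PQNbmaxi=\PQNmaxbi$ is semilinear by Corollary~\ref{corAIPQNsemilin}, it is generated by its chains, and by local finiteness by its finite chains. So it suffices to show that every finite chain of $\PQNbmaxi$ lies in $\mathsf{ISP}_U$, or simply in $\mathsf{IS}$, of $\langle{\bf S}_n,\hat\circ\rangle$. I will follow the route of Theorem~\ref{Sinfty}, adapting \cite[Theorem 4.2]{FlaRi}.

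First I will establish local finiteness for chains, as in Proposition~\ref{prop:locFin}. A finitely generated $\mathsf{PQN}$-chain is finite by \cite[Cor.~3.2]{FlaRi}. On a chain, $\circ$ is forced to be $\hat\circ$ of \eqref{maxconschain}, and $\hat\circ$ takes values in $\{0,1\}$. Hence the $\langle\cdot,\circ\rangle$-subalgebra generated by a finite set is just the $\mathsf{PQN}$-subalgebra generated by that set together with $0,1$, and it is finite. It follows that the variety is generated by its finite chains.

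Second, take a finite chain $\langle\Al,\hat\circ\rangle$ in the variety. Its $\mathsf{PQN}$-reduct $\Al$ lies in $\mathsf{ai}\PQNA$. By \eqref{maxconschain}, Lemma~\ref{lem:lemexplo} and the chain property, $E(\Al)=B(\Al)=\{0,1\}$, so $\nnot a>0$ for every $a<1$. The genericity proof of \cite[Theorem 4.2]{FlaRi} embeds $\Al$ into ${\bf S}_n$ by an order embedding $h$ that sends elements to the decreasing sequence $a_i$ below the fixpoint $f$, to $f$, and to points with matching negations above $f$. I will re-run that construction with one extra requirement: $h(a)\notin E({\bf S}_n)$ for every $a\notin\{0,1\}$. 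This is the step where the almost-involutive hypothesis on $n$ enters. Because ${\bf S}_n$ is generic for $\mathsf{ai}\PQNA$, its negation is involutive on a left neighbourhood $(1-\varepsilon,1]$ of $1$, and $n(b)>0$ for all $b<1$, so $E({\bf S}_n)=\{0,1\}$, as in Figure~\ref{figNewNeg}. With this, $\hat\circ$ commutes with $h$: $h(a)\in E({\bf S}_n)$ iff $h(a)\in\{0,1\}$ iff $a\in\{0,1\}$ iff $a\in E(\Al)$. Therefore $h$ is an embedding of $\langle\Al,\hat\circ\rangle$ into $\langle{\bf S}_n,\hat\circ\rangle$.

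The main obstacle is confirming that the construction of \cite{FlaRi} actually produces a map into ${\bf S}_n$ for chains whose negation satisfies $\nnot a>0$ for $a<1$. The images of elements near $1$ must have nonzero negations that are matched exactly. That construction may use the flat region $n^{-1}(0)$ to host elements with zero negation; in our case no such elements exist besides $1$. So the images should go into the involutive zone near $1$, where $n$ restricted to suitable finite sets reproduces any finite pattern of negation values. If an exact embedding proves awkward, I will fall back on the standard alternative: $\langle\Al,\hat\circ\rangle\in\mathsf{ISP}_U\langle{\bf S}_n,\hat\circ\rangle$, obtained by checking that every finite partial subalgebra embeds, which again reduces to the same matching of negations. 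Either way $\mathsf{V}\langle{\bf S}_n,\circ\rangle$ contains all finite chains, so it equals $\PQNbmaxi=\PQNmaxbi$.
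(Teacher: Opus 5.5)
Your overall route is the one the paper intends: its own proof is only the remark that the argument of Theorem~\ref{Sinfty} (local finiteness plus the embedding strategy of \cite[Theorem 4.2]{FlaRi}) adapts. Your observation that $E(\mathbf{S}_n)=E(\mathbf{A})=\{0,1\}$ makes every $\mathsf{PQN}$-embedding automatically preserve $\hat\circ$ is exactly why that adaptation is harmless. The semilinearity and local-finiteness steps are fine.

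\textbf{The gap.} You never establish the step you call the main obstacle, namely that every finite chain of $\mathsf{ai}\PQNA$ embeds into $\mathbf{S}_n$, and the mechanism you suggest for it fails. Finite $\mathsf{ai}\PQNA$-chains need not be involutive. Take $0<c_1<c_2<c_3<1$ with $\nnot c_1=c_3$, $\nnot c_2=\nnot c_3=c_1$, and the $\mathsf{WNM}$ product determined by this weak negation. This chain satisfies the Nelson equation and has $\nnot x=0$ only for $x=1$, yet $\nnot\nnot c_2=c_3\neq c_2$. On a zone where $n$ is involutive it is injective, so no points in the involutive zone near $1$ can realize $\nnot c_2=\nnot c_3$. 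The images of such elements must lie on a flat piece of $n$ at positive height, away from that zone. Nor can \cite[Theorem 4.2]{FlaRi} just be rerun: the generic negation there has the zero-zone $[a,1]$, which an almost involutive $n$ lacks. Your $\mathsf{ISP}_{\mathsf U}$ fallback gains nothing either, since for a finite algebra membership in $\mathsf{ISP}_{\mathsf U}(\mathbf{S}_n)$ already means embeddability into $\mathbf{S}_n$, by {\L}o\'s's theorem.

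\textbf{How to close it.} The step has to come from the genericity hypothesis itself. Read ``generic for $\mathsf{ai}\PQNA$'' as saying that the finite $\mathsf{ai}\PQNA$-chains lie in $\mathsf{Q}(\mathbf{S}_n)$, e.g.\ $\mathsf{Q}(\mathbf{S}_n)=\mathsf{ai}\PQNA$. Then a finite chain $\mathbf{A}\in\mathsf{ai}\PQNA$ embeds into a product of ultrapowers of $\mathbf{S}_n$. The kernels of the projections are congruences of $\mathbf{A}$; these correspond to filters $[c,1]$ and so form a finite chain. Hence one projection is already injective, giving $\mathbf{A}\in\mathsf{ISP}_{\mathsf U}(\mathbf{S}_n)$ and, by finiteness, $\mathbf{A}\in\mathsf{IS}(\mathbf{S}_n)$. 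Combined with your remark on $\hat\circ$, this finishes the proof.

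Genericity at the level of varieties would not give this. $\mathsf{ai}\PQNA$ is not closed under $\mathsf{H}$: the filter $[c_3,1]$ of the chain above yields the three-element G\"odel chain as a quotient. Moreover, $\mathsf{PQN}$-homomorphisms that collapse a non-top element onto $1$ do not preserve $\hat\circ$. So you really need embeddings, as you rightly aimed for.
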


We close the section with 
an observation about the preservation
of  semilinearity 
that is analogous
to the one made in the last part of the previous section 
in the case of $\mathsf{PQN}$-algebras with a undeterminedness operator.
Indeed, while none of the (quasi)varieties $\PQNmax_{\sf m}$, $\PQNmaxb$ and $\PQNbmax$  is semilinear, semilinearity is preserved on their sub-quasivarities satisfying  
the following quasi-equation:
\begin{equation}
\text{ if } (x \leftrightarrow y)\lor z = 1 , \text{ then } (\circ x \leftrightarrow \circ y)\lor z = 1
\tag{Con-$\vee^\circ$}
\end{equation}
For any $\mathsf{K} \in \{\PQNmax_{\sf m}, \PQNmaxb, \PQNbmax\}$, let us denote the corresponding
quasivariety by $\mathsf{K}\mbox{-}\lor$. 
For the same reasons explained in Section~\ref{recov-undeterminded}, each 
$\mathsf{K}\mbox{-}\lor$ is semilinear, and hence  generated by its totally ordered members. 
On the other hand, we have seen that the unique consistency operator that can be defined on a PQN chain is that of \eqref{maxconschain}. Hence, the chains in 
$\mathsf{K}\mbox{-}\lor$ for each $\mathsf{K}$ coincide;  thus 
we have the following result that can be interestingly compared with Proposition \ref{Prop:Var-V} above.

\begin{proposition}
$\PQNmax_{\sf m}\mbox{-}\lor = \PQNmaxb\mbox{-}\lor = \PQNbmax\mbox{-}\lor$. 
\end{proposition}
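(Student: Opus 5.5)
The strategy mirrors Proposition~\ref{Prop:Var-V}: each of the three quasivarieties $\mathsf{K}\mbox{-}\lor$, for $\mathsf{K} \in \{\PQNmax_{\sf m}, \PQNmaxb, \PQNbmax\}$, is generated (as a quasivariety) by its totally ordered members, and these members are literally the same algebras in all three cases. Once both facts are established, each $\mathsf{K}\mbox{-}\lor$ equals $\mathsf{Q}(\mathsf{C})$ for one common class $\mathsf{C}$ of chains, and the three equalities follow.

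\textbf{Step 1: semilinearity.} Each $\mathsf{K}\mbox{-}\lor$ is the equivalent algebraic semantics of a finitary Rasiowa-implicative expansion of $\mathcal{PQN}$. In that logic the only new connective $\circ$ satisfies the $\lor$-form of congruence, which is exactly (Con-$\vee^\circ$). Hence the logic enjoys the proof-by-cases property for $\lor$ together with prelinearity. By \cite[Corollary~6.2.8]{Cintula-Noguera-book} the logic is semilinear, so $\mathsf{K}\mbox{-}\lor = \mathsf{Q}(\mathsf{C}_{\mathsf K})$, where $\mathsf{C}_{\mathsf K}$ is the class of chains in $\mathsf{K}\mbox{-}\lor$.

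This step is the one needing care. For the defining quasi-equations of $\PQNmaxb$ (Definition~\ref{def:maxBco}(iii)), one must check that they translate into rules of the logic compatible with the hypotheses of the cited corollary. These hypotheses concern the $\lor$-form of congruence only for the connectives, and admit arbitrary finitary rules. I expect this to go through exactly as in Section~\ref{recov-undeterminded}, so I would simply refer to that argument.

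\textbf{Step 2: the chains coincide.} On a $\mathsf{PQN}$-chain $\Al$, the element $0$ is meet-irreducible. So $a\land\nnot a\land b=0$ holds iff $b=0$ or $a\land\nnot a=0$. It follows that the greatest $b$ with $a\land \nnot a\land b=0$ is $1$ if $a\in E(\Al)$ and $0$ otherwise. Thus any $\max$-consistency operator on $\Al$ is $\hat\circ$ from \eqref{maxconschain}. Since $\hat\circ$ takes values in $\{0,1\}=B(\Al)$, it is also $\mathsf{Bmax}$ and $\mathsf{maxB}$. Conversely, a $\mathsf{maxB}$ operator on a chain only maximizes over Boolean $y\in\{0,1\}$, which yields $\hat\circ$ again.

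It remains to check that $\langle\Al,\hat\circ\rangle$ satisfies (Con-$\vee^\circ$). On a chain, $(x\leftrightarrow y)\lor z=1$ means $z=1$ or $x=y$. In either case the consequent $(\circ x\leftrightarrow\circ y)\lor z=1$ holds trivially. Hence $\mathsf{C}_{\mathsf K}$ is the same class $\mathsf{C}$ of all pairs $\langle\Al,\hat\circ\rangle$ with $\Al$ a $\mathsf{PQN}$-chain, for every choice of $\mathsf{K}$.

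\textbf{Conclusion.} Combining the two steps, $\PQNmax_{\sf m}\mbox{-}\lor=\mathsf{Q}(\mathsf C)=\PQNmaxb\mbox{-}\lor=\PQNbmax\mbox{-}\lor$.
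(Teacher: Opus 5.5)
Your overall strategy is the paper's: semilinearity of each $\mathsf{K}\mbox{-}\lor$ via \cite[Corollary~6.2.8]{Cintula-Noguera-book}, followed by uniqueness of $\hat\circ$ on $\mathsf{PQN}$-chains. Step~2 and the conclusion $\mathsf{K}\mbox{-}\lor=\mathsf{Q}(\mathsf{C})$ are correct, and slightly more explicit than the paper, since you check that $\langle\Al,\hat\circ\rangle$ satisfies (Con-$\vee^\circ$).

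The gap is in Step~1 for $\PQNmaxb$. You claim that the Cintula--Noguera hypotheses concern only the $\lor$-form of congruence and ``admit arbitrary finitary rules''. That is false. An expansion of a semilinear logic stays semilinear only if the $\lor$-form of \emph{every} rule of the expansion is derivable, because that is what yields the proof-by-cases property. Your inference from (Con-$\vee^\circ$) to proof by cases is therefore valid for $\PQNmax_{\sf m}$ and $\PQNbmax$, which are varieties by the equations listed after Definition~\ref{def:maxcop}. It is not valid for the quasi-equation (iii) of Definition~\ref{def:maxBco}, which is not in $\lor$-form. Nothing in your argument guarantees that (iii) survives in the quotients by prime filters. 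Items (i) and (ii) of that definition alone do not determine the operator on a chain: for $a\in E(\Al)$, both $\circ a=0$ and $\circ a=1$ are compatible with them. So you have not shown that the relatively subdirectly irreducible members of $\PQNmaxb\mbox{-}\lor$ are chains.

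The missing ingredient is Remark~\ref{rem:cecb}. Under prelinearity $\cb=\ce$, and $\mathsf{maxE}$ operators are equationally definable. Hence $\PQNmaxb$ is a variety, and $\PQNmaxb\mbox{-}\lor$ is axiomatized by equations plus (Con-$\vee^\circ$). This is exactly what makes your appeal to Section~\ref{recov-undeterminded} legitimate, since there $\PQNminb$ is a variety by \cite[Thm.~3]{FiRi24}; your reference to that section silently relies on the analogous fact here. Alternatively, it would suffice to show directly that the $\lor$-form of (iii) holds throughout $\PQNmaxb\mbox{-}\lor$; this is the algebraic counterpart of (MaxB$^\lor$) from Section~\ref{sec:logCons}. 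With that fact cited, your proof coincides with the paper's.
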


\section{$\PQNA$-algebras 
with both 
operators and almost involutive negations}
\label{sec:both}

We now briefly consider the case of $\mathsf{PQN}$-algebras with both recovery operators $\bu$ and $\circ$. First of all, let us observe that any algebra $\A$ satisfies the equation 
$\nnot\bu x \leq \circ x$.
Indeed,
for all $a \in A$, 
we have
the equality $ a \lor \nnot a \lor \bu a = 1$, 
which we can negate, 
obtaining 
$$
\nnot ( a \lor \nnot a \lor \bu a) = \nnot a \land \nnot \nnot a \land \nnot \bu a =  0 = \nnot  1.
$$
Since $ a \land  \nnot a \land \nnot \bu a \leq  \nnot a \land \nnot \nnot a \land \nnot \bu a $,
we obtain $ a \land  \nnot a \land \nnot \bu a =0$ which, by definition of $\circ$, 
gives us $\nnot \bu a \leq \circ a$.
The converse inequality, which need not hold in general,
is equivalent to the conditions in Lemma~\ref{lem:lemexplo}  (hence, it holds on $\mathsf{aiPQN}$-algebras), as the following results shows.
 
\begin{lemma} \label{lem:lemexplo2} 
For a quasi-Nelson algebra $\A$ endowed with both a $\max$-consistency operator and a 
$\min$-undeterminedness operator,
each of the following equations 
is equivalent to each of the conditions of Lemma~\ref{lem:lemexplo}:
\begin{enumerate}[(i)]
    \item[(iv)] 
    $\bu x \leq \bu (x \land \nnot x)$.
                \item[(v)] 
                $ \circ x = \nnot \bu x$.
                \item[(vi)] 
                $\circ x \leq \nnot \bu x $.
                \item[(vii)] 
                $\bu x \leq \nnot \circ x$.
\end{enumerate}
\end{lemma}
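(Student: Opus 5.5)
The plan is to fix condition (ii) of Lemma~\ref{lem:lemexplo}, namely ``$\nnot x = 0$ implies $x=1$'', as the pivot and prove each of (iv)--(vii) equivalent to it. Two facts are used throughout. The first is the inequality $\nnot \bu x \leq \circ x$, established just before the statement. The second is the Galois property of the negation, $a \leq \nnot b$ iff $a * b = 0$ iff $b \leq \nnot a$, which holds in every commutative residuated lattice.

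First I dispose of the easy equivalences among (v), (vi) and (vii). Since $\nnot\bu x \leq \circ x$ always holds, (v) is equivalent to (vi). By the Galois property, $\circ x \leq \nnot \bu x$ holds iff $\circ x * \bu x = 0$, iff $\bu x \leq \nnot\circ x$; so (vi) is equivalent to (vii).

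Next I link (vii) with (ii). For (ii) $\Rightarrow$ (vii), Lemma~\ref{lem:lemexplo} gives (iii), that is $a \lor \nnot a \lor \nnot\circ a = 1$. Item (i) of Definition~\ref{def:mincop} with $y = \nnot\circ a$ then yields $\bu a \leq \nnot \circ a$. For (vii) $\Rightarrow$ (ii), let $\nnot a = 0$. Then $a \land \nnot a = 0$, so $\circ a = 1$, and by (vii) $\bu a \leq \nnot 1 = 0$. The equation $a \lor \nnot a \lor \bu a = 1$ now gives $a = 1$.

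For (iv) $\Rightarrow$ (ii), first note that $\bu 0 = 0$, because $0 \lor \nnot 0 \lor 0 = 1$. If $\nnot a = 0$, then $a \land \nnot a = 0$, so (iv) gives $\bu a \leq \bu 0 = 0$, and again $a \lor \nnot a \lor \bu a = 1$ forces $a = 1$.

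The remaining direction (ii) $\Rightarrow$ (iv) is where I expect the real work. Put $z := a \land \nnot a$. By Definition~\ref{def:mincop}(i) it suffices to show $a \lor \nnot a \lor \bu z = 1$, and by (ii) it suffices to show that its negation, $\nnot a \land \nnot\nnot a \land \nnot \bu z$, equals $0$. We know $z \lor \nnot z \lor \bu z = 1$, hence $\nnot z \land \nnot\nnot z \land \nnot\bu z = 0$. So it is enough to prove $\nnot a \land \nnot\nnot a \leq \nnot z \land \nnot\nnot z$.

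The bound $\nnot a \leq \nnot z$ is immediate from $z \leq a$. The other bound, $\nnot(a\lor\nnot a) \leq \nnot\nnot(a \land \nnot a)$, is equivalent to $\nnot(a \lor \nnot a) * \nnot(a\land\nnot a) = 0$, and this is the main obstacle. It is trivial in Heyting algebras, where $\nnot(a\land\nnot a)=1$ and $\nnot(a \lor \nnot a) = 0$. It also holds in Nelson algebras, where $\nnot(a\land\nnot a) = \nnot a \lor a$. My first attempt at the general quasi-Nelson case is to use distributivity, 3-potency and the Nelson identity, or to verify it on $\mathsf{QN}$ via the twist-representation of quasi-Nelson algebras. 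If that fails, I would instead aim to show $\nnot\circ a \leq \bu z$ directly and then combine it with (vii) to get $\bu a \leq \nnot \circ a \leq \bu z$.
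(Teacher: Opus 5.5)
Most of your argument is correct. You get (v)$\Leftrightarrow$(vi) from $\nnot\bu x \leq \circ x$, and (vi)$\Leftrightarrow$(vii) from the Galois property via $\circ x * \bu x = 0$, which is neater than the paper's double-negation manipulation. You get (ii)$\Leftrightarrow$(vii) from item (iii) of Lemma~\ref{lem:lemexplo}, and (iv)$\Rightarrow$(ii) from $\bu 0 = 0$. Your reduction of (ii)$\Rightarrow$(iv) to $\nnot a \land \nnot\nnot a \leq \nnot z \land \nnot\nnot z$, with $z = a \land \nnot a$, is also sound and follows the paper's route.

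The gap is that you never prove the remaining bound $\nnot(a\lor\nnot a) \leq \nnot\nnot(a \land \nnot a)$. You flag it as the main obstacle and only list strategies that might work, plus a fallback ($\nnot\circ a \leq \bu z$) that is itself unproved. As it stands, (ii)$\Rightarrow$(iv) is not established, and so neither is the equivalence of (iv) with the other conditions.

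The missing ingredient is the quasi-Nelson identity $\nnot x \land \nnot\nnot x = x \land \nnot x$, i.e.\ $\nnot(x \lor \nnot x) = x \land \nnot x$. The paper already uses it in the proof of Lemma~\ref{lem:lemexplo}. It follows directly from \eqref{eq:Nelson}:
\begin{itemize}
\item Put $w := \nnot x \land \nnot\nnot x$. Then $w^2 \leq \nnot x * \nnot\nnot x = 0$, so $w^2 \imp x = 1$.
\item From $w \leq \nnot\nnot x$ we get $\nnot x = \nnot\nnot\nnot x \leq \nnot w$, so $(\nnot x)^2 \imp \nnot w = 1$.
\item The Nelson equation now gives $w \imp x = 1$, i.e.\ $w \leq x$. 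The reverse inequality $x \land \nnot x \leq \nnot x \land \nnot\nnot x$ is trivial.
\end{itemize}

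With this identity, $\nnot a \land \nnot\nnot a = z$. Moreover $z^2 \leq a * \nnot a = 0$ gives $z \leq \nnot z$, and $z \leq \nnot\nnot z$ always holds. Therefore
$\nnot a \land \nnot\nnot a \land \nnot\bu z = z \land \nnot\bu z \leq \nnot z \land \nnot\nnot z \land \nnot\bu z = \nnot(z \lor \nnot z \lor \bu z) = 0$,
and your argument closes.

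This is essentially how the paper finishes. It shows $a \land \nnot a \land \nnot\bu(a\land\nnot a) = 0$, using $\nnot\nnot(a \land \nnot a) = \nnot\nnot a \land \nnot a = a \land \nnot a$, and then applies the quasi-equation (ii).
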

\begin{proof}
In this proof, when referring to (ii), we mean item (ii) of  Lemma~\ref{lem:lemexplo}.

     (ii)$\Leftrightarrow$(iv). Assuming (iv) holds, let $a \in A$ be such that $\nnot a = 0$. Then $\bu a \leq \bu (a \land \nnot a) = \bu 0 = 0$,
so $1 = a \lor \nnot a \lor \bu a = a \lor 0 \lor 0 = a$, as required. 
Conversely, assume Lemma \ref{lem:lemexplo} (ii) holds, and let $a \in A$.
Note that $a \land \nnot a \land \nnot \bu (a \land \nnot a) = 0$ always holds. 
Indeed, from
$1 = (a \land \nnot a ) \lor \nnot (a \land \nnot a ) \lor \bu (a \land \nnot a) = \nnot (a \land \nnot a ) \lor \bu (a \land \nnot a) $, we have
$0 
= \nnot 1 
= \nnot (\nnot (a \land \nnot a ) \lor \bu (a \land \nnot a)) 
= \nnot \nnot (a \land \nnot a ) \land \nnot \bu (a \land \nnot a)  
 = \nnot \nnot a \land \nnot \nnot \nnot a  \land \nnot \bu (a \land \nnot a)  
 =  a \land \nnot  a  \land \nnot \bu (a \land \nnot a)  $.
But $0 = a \land \nnot a \land \nnot \bu (a \land \nnot a) = \nnot (a \lor \nnot a \lor \bu (a \land \nnot a))$.
Thus, applying the quasi-equation, we obtain $a \lor \nnot a \lor \bu (a \land \nnot a) = 1$, which gives us
$\bu a \leq \bu (a \land \nnot a)$, as required. 


 (ii)$\Leftrightarrow$(v). Assuming Lemma \ref{lem:lemexplo} (ii), let $a \in A$. 
Since $a \land \nnot a = \nnot \nnot  a \land \nnot  a$
and $\nnot \nnot \circ a = \circ a$, we have
$0 
= \nnot a \land a \land \circ a 
= \nnot a \land \nnot \nnot a \land \nnot \nnot \circ a 
= \nnot (a \lor \nnot a \lor \nnot \circ a)$. Hence,
applying Lemma \ref{lem:lemexplo} (ii), 
we obtain 
$a \lor \nnot a \lor \nnot \circ a = 1$.
By the definition of $\bu$, this means that 
$\bu a \leq \nnot \circ a $, whence
$ \nnot  \nnot \circ a = \circ a \leq \nnot \bu a $.
As mentioned above, the converse inequality, $   \nnot \bu a \leq \circ a$, always holds.
Now assume (v) holds but Lemma \ref{lem:lemexplo} (ii) does not.
Then there is $a \in A$ such that
$a < 1$ and $\nnot a = 0$. Then $a \land \nnot a = 0$, so $\circ a = 1$.
By (v), we have $\nnot \bu a = \circ a = 1$, so $ \bu a \leq \nnot \nnot \bu a = 0 $.
By the definition of $\bu$, this in turn would imply $1 = a \lor \nnot a \lor \bu a = a \lor 0 = a $, against
our initial assumption.

The equivalence (v)$\Leftrightarrow$(vi) is clear.

(v)$\Leftrightarrow$(vii).
Recall that 
$x \leq \nnot \nnot x$ holds on every  commutative residuated lattice. 
Assuming 
$
\nnot \bu a = \circ a
$
 for some $a \in A$, we have 
$
\bu a \leq \nnot \nnot \bu a = \nnot \circ a
$.
Similarly, from $\bu a \leq  \nnot \circ a$
we obtain $ \circ a  \leq \nnot   \nnot \circ a \leq \nnot \bu a $,
and the inequality 
$   \nnot \bu a \leq \circ a  $ holds in general, as we have observed. 
\end{proof}

Item (v) in the above lemma is particularly interesting as it complements an observation we made in Section \ref{prelinearity} concerning involutivity. There, anticipating 
Corollary~\ref{corAIPQNsemilin}, we claimed that involutivity of the negation is a sufficient
but not necessary condition to recover the semilinearity of a variety of $\mathsf{PQN}$-algebras expanded with a $\mathsf{Bmax}$-consistency operator. Now, we have seen that the involutivity of the negation is sufficient but not necessary to define  $\circ$ from $\bu$. Indeed, by the equivalence of Lemma~\ref{lem:lemexplo} (ii) and Lemma~\ref{lem:lemexplo2} (v) it is the involutive behavior of $\nnot$ in a left interval of $1$ (recall Figure~\ref{figNewNeg}) that ensures that definability property.

The following proposition has a more limited application, namely to Boolean consistency operators.

\begin{proposition} \label{lem:bmaxiitoiii} Let $\Al$ be a  quasi-Nelson algebra 
endowed with  a $\rm{B} \!  \max $-consistency operator $\circ$
and a 
$\min$-undeterminedness operator $\bu$
that
satisfies 
any of the equations in
Lemma~\ref{lem:lemexplo2}. 
Then $\bu$ is a $\mathsf{Bmin}$ 
operator
and, in consequence, 
$\Al$ also  satisfies $\bu x = \nnot \nnot \bu x$ 
and 
$\bu x = \nnot \circ x$. 
\end{proposition}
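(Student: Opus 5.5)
By Lemma~\ref{lem:lemexplo2}, all the equations (iv)--(vii) are equivalent. So we may assume that item (v) holds, that is, $\circ x = \nnot \bu x$, together with (vii), $\bu x \leq \nnot \circ x$. The plan is to first show $\bu a \geq \nnot \circ a$; combined with (vii) this gives $\bu a = \nnot \circ a$, and Booleanness of $\bu a$ then follows from Booleanness of $\circ a$.

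\textbf{Step 1: $\nnot\circ a$ lies below $\bu a$.} Fix $a \in A$. Since $\circ$ is $\mathsf{Bmax}$, $\circ a \lor \nnot \circ a = 1$, and Boolean elements satisfy $\nnot\nnot \circ a = \circ a$. By (v), $\circ a = \nnot \bu a$, so
\[
1 = \nnot \bu a \lor \nnot\circ a .
\]
Using $a \lor \nnot a \lor \bu a = 1$ (item (ii) of the equational presentation of $\min$-operators), we get
\[
\nnot\circ a = \nnot\circ a \land (a\lor\nnot a\lor \bu a).
\]
Distributivity then splits this into the pieces $\nnot\circ a\land \bu a$ and $\nnot\circ a \land (a\lor\nnot a)$. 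I expect this second piece to be the main obstacle. The direct idea is that $\nnot\circ a \land a \land \nnot a$ should vanish, but that alone does not show that $\nnot\circ a\land a$ vanishes.

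\textbf{A cleaner route for Step 1.} I would therefore use the definition of $\bu$ as a minimum. Set $y := \nnot\circ a$ and verify that $a \lor \nnot a \lor y = 1$; the $\min$ property then gives $\bu a \leq y$, which is just (vii) again. What is really needed is the reverse inequality. For that, argue as follows. Since $\circ a$ is Boolean, $\nnot \circ a = \nnot\nnot\bu a \geq \bu a$. Conversely, $\bu a \lor \nnot\bu a \geq \bu a \lor \circ a$. Combining with $\circ a\lor\nnot\circ a=1$ and (vii) gives
\[
\nnot\circ a \leq \nnot\circ a \land (\bu a \lor \circ a) = (\nnot\circ a\land \bu a) \lor (\nnot\circ a\land\circ a) = \nnot\circ a\land\bu a \leq \bu a .
\]
For this chain I still need $\bu a\lor\circ a = 1$. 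This follows from $1 = \circ a \lor \nnot\circ a$ together with $\nnot\circ a = \nnot\nnot\bu a$, provided I can show $\nnot\nnot\bu a \leq \bu a\lor\circ a$. I would obtain that by noting that $\nnot\nnot\bu a \land \nnot\bu a = 0$, so that $\nnot\nnot \bu a = \nnot\nnot\bu a\land(\bu a\lor\ldots)$. This is the step where I would try out concrete manipulations with the Nelson identity and 3-potency if the lattice argument does not close.

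\textbf{Conclusion.} Once $\bu a = \nnot\circ a$ is established, $\bu a$ is the negation of a Boolean element and hence Boolean, since $\nnot\circ a \lor \nnot\nnot\circ a = \nnot\circ a\lor \circ a = 1$. Thus $\bu$ satisfies condition (iii) of Definition~\ref{def:mincop}, i.e.\ it is $\mathsf{Bmin}$. The identity $\bu x = \nnot\nnot\bu x$ then holds because Boolean elements are fixed by double negation: from $b\lor\nnot b=1$ we get $\nnot\nnot b = \nnot\nnot b\land(b\lor\nnot b) = b$.
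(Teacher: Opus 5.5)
There is a genuine gap. Everything hinges on $\bu a \lor \circ a = 1$, and you never establish it. The route you sketch is circular. Given $\nnot\circ a = \nnot\nnot\bu a$ and $\circ a \lor \nnot\circ a = 1$, the inequality $\nnot\nnot\bu a \leq \bu a \lor \circ a$ is just a restatement of $\bu a \lor \circ a = 1$. Likewise, completing $\nnot\nnot\bu a = \nnot\nnot\bu a \land (\bu a \lor \ldots)$ with $\nnot\bu a$ presupposes $\bu a \lor \nnot \bu a = 1$, which is exactly the Booleanness you want.

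More seriously, no manipulation that uses the hypotheses only at the fixed element $a$ can succeed. This holds even with the $\mathsf{QN}$ axioms, 3-potency, Booleanness of $\circ a$, and the max/min characterizations of $\circ$ and $\bu$ at your disposal. To see this, take the algebra $\mathbf{A}_6$ of Figure~\ref{figCounterExample} with its $\mathsf{Bmax}$ operator $\circ$. Equip it with its $\min$-undeterminedness operator $\bu$, which exists because $\mathbf{A}_6$ is a finite distributive lattice. Now look at the element $a$ of $\mathbf{A}_6$:
\begin{itemize}
\item $\nnot a = d$ and $a \land \nnot a = a$;
\item $\circ a = 0$;
\item $\bu a = c$, the least $y$ with $d \lor y = 1$;
\item $\nnot c = 0$.
\end{itemize}
Hence $\circ a = \nnot\bu a$, $\bu a \leq \nnot \circ a = 1$ and $\bu a \leq \bu(a \land \nnot a)$, so (iv)--(vii) all hold at $a$. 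Yet $\bu a = c$ is not Boolean, since $c \lor \nnot c = c$, and $\bu a \neq \nnot\circ a$.

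The missing idea is to use the hypothesis at a \emph{different} element. By Lemma~\ref{lem:lemexplo2}, which you already invoked, each of (iv)--(vii) is equivalent to the quasi-equation of Lemma~\ref{lem:lemexplo}(ii): $\nnot x = 0 \Rightarrow x = 1$. Apply it to $x = \circ a \lor \bu a$. Using $\nnot \bu a \leq \circ a$ (or (v)) and $\circ a \land \nnot \circ a = 0$, you get
\[
\nnot(\circ a \lor \bu a) = \nnot\circ a \land \nnot \bu a \leq \nnot \circ a \land \circ a = 0,
\]
hence $\circ a \lor \bu a = 1$. In $\mathbf{A}_6$ the quasi-equation fails precisely here: $\circ a \lor \bu a = c$, $\nnot c = 0$, but $c \neq 1$.

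With this step in place, your displayed chain is correct and gives $\nnot\circ a \leq \bu a$. Together with (vii), $\bu a = \nnot \circ a$ is Boolean, and your concluding paragraph goes through. The paper argues the same way: (vii) gives $\bu a \land \circ a \leq \nnot\circ a \land \circ a = 0$, and the quasi-equation gives $\bu a \lor \circ a = 1$. So $\bu a$ is the Boolean complement of $\circ a$, and then $\nnot \circ a \leq \nnot\nnot\bu a = \bu a$.
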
 

\begin{proof}
%
Let $a \in A$ be a generic element. Let us show that $\bu a \in B(\Al)$,
which entails $\bu a = \nnot \nnot \bu a$. 
Indeed, from $\circ a \lor \nnot \circ a = 1$, we have
$\nnot (\circ a \lor \nnot \circ a ) = \nnot \circ a \land \nnot \nnot \circ a  = \nnot \circ a \land  \circ a = 0$. Using the assumption $ \bu a \leq \nnot \circ a$, we thus obtain
$\bu a \land  \circ a = 0$. Moreover from $\nnot \bu a \leq \circ a$
and $\nnot \circ a \land  \circ a = 0$
we obtain $0 = \nnot \circ a \land  \nnot \bu a = \nnot (\circ a \lor \bu a)$. 
From the latter, applying the quasi-equation in Lemma~\ref{lem:lemexplo} (ii),
we obtain $\circ a \lor \bu a = 1$. Hence, $\circ a$ and $\bu a$ are Boolean complements
of each other. Thus, $\bu a \in B(\Al)$ and $\bu a = \nnot \nnot \bu a$. 
Then $\nnot \bu a \leq \circ a$  also yields
$\nnot \circ a \leq \nnot \nnot \bu a = \bu a $, as required.
\end{proof} 

The condition $\bu x = \nnot \nnot \bu x$ considered in the conclusion of the preceding proposition 
has some independent interest:
we may observe that, in general, it does not imply  
the conditions of Lemma~\ref{lem:lemexplo2}.
Indeed, 
considering the three-element G\"odel chain $\Al[G]_3$ (with elements $0 < a < 1$) endowed with the corresponding operators
(cf.~Remark~\ref{rem:pseudocowithball}), we see that $\Al[G]_3$ satisfies 
$\bu x = \nnot \nnot \bu x$.
However, 
$1 = \bu a  \not \leq \nnot \circ a = 0$.

\section{
The logic $\mathcal{PQN}$
with an undeterminedness operator}\label{sec:logUndeter}

In this and the following sections we move from the algebraic framework we have considered so far to a logical setting to analyze the logics resulting from expanding the prelinear quasi-Nelson logic $\mathcal{PQN}$ with recovery operators. 

In this section we start with expanding the logic $\mathcal{PQN}$ with an undetermindedness operator $\bu$,  
%
%
with the idea in mind of eventually capturing the logics behind the varieties $\mathsf{PQN}^\bullet_{\sf m}, \mathsf{PQN}^\bullet_{\sf Bm}$ and $\mathsf{PQN}^\bullet_{\sf mB}$ of prelinear quasi-Nelson algebras endowed with a $\min$, $\rm{B}\!\min $ and $\min\!\rm{B}$ operators respectively, as studied in Section \ref{recov-undeterminded}.

\begin{definition} \label{def51} 
(i) The logic  $\mathcal{PQN}^\bullet_{\sf m}$ is defined as the expansion of $\mathcal{PQN}$ in a language with a new unary connective $\bullet$ with  the following additional axiom: \vspace{0.2cm}

\begin{tabular}{ll}
{\rm(A1$_\bu$)} & $\varphi\lor \neg\varphi \lor \bullet\varphi$ \\
\end{tabular}
\vspace{0.2cm}

\noindent
and inference rules:  \vspace{0.2cm}

\begin{tabular}{ll}
{\rm(Cong$_\bu$)} \, $\displaystyle \frac{ \varphi \leftrightarrow \psi}{ \bullet\varphi \to \bullet\psi}$ \hspace{1cm} 
& {\rm(Min)} \,$\displaystyle \frac{\varphi\lor \neg\varphi \lor \psi}{ \bullet\varphi \to \psi}$.   \\
\end{tabular}
\vspace{0.2cm}

\noindent (ii) The logic  $\mathcal{PQN}^\bullet_{\sf Bm}$ is defined as the axiomatic extension of $\mathcal{PQN}^\bullet_{\sf m}$ with the additional axiom 

\begin{tabular}{ll}
{\rm(A2$_\bu$)}  & ${\bullet} \varphi \lor \neg {\bullet} \varphi $ \\
\end{tabular}

\noindent (iii) Finally, the logic  $\mathcal{PQN}^\bullet_{\sf mB}$ is defined as the expansion of $\mathcal{PQN}$ with the axioms (A1$_\bu$), (A2$_\bu$), the rule (Cong$_\bu$) and the following rule: 

\vspace{0.2cm}

\begin{tabular}{ll}
 {\rm(MinB)} \,$\displaystyle \frac{\varphi\lor \neg\varphi \lor \psi, \quad \psi \lor \neg \psi}{ \bullet\varphi \to \psi}.$  \vspace{0.2cm} \\
\end{tabular}
\end{definition}

Note that it is the presence of the (Cong$_\bu$) inference rule in the above three logics that guarantees the algebraizability  of these logics (in the sense of Blok-Pigozzi).
Moreover, thanks to \cite[Thm.~1 and Thm.~3]{FiRi24}, 
the inference rules (Min) and (MinB) 
can be safely replaced by the following two axioms:

\begin{enumerate}[(A3$_\bu$)]
\item $\bu \varphi  \to (\psi \lor \bu  (\varphi \lor \nnot \varphi \lor \psi))$
\item[(A4$_\bu$)] $\nnot \bu \top$, 
 \end{enumerate}
while the (MinB) inference rule can be safely replaced by the axiom (A4$_\bu$) plus the following axiom:
\begin{enumerate}
\item[(A5$_\bu$)]   $\bu \varphi  \to (\psi \lor \bu (\psi \lor \nnot \psi) \lor \bu  (\varphi \lor \nnot \varphi \lor \psi))$.
 \end{enumerate}
 
Therefore, the logics $\mathcal{PQN}^\bullet_{\sf m}$, $\mathcal{PQN}^\bullet_{\sf mB}$ and $\mathcal{PQN}^\bullet_{\sf Bm}$ are automatically sound and strongly complete with respect to their corresponding varieties of algebras $\mathsf{PQN}^\bullet_{\sf m}, \mathsf{PQN}^\bullet_{\sf Bm}$ and $\mathsf{PQN}^\bullet_{\sf mB}$ of prelinear quasi-Nelson algebras with an undeterminedness operator, respectively. 

We note that the logic $\mathcal{PQN}$ from which we departed is paracomplete,
i.e.~does not prove the Excluded Middle principle $
\varphi \lor \neg \varphi$. To see this it is enough, by completeness, to find a $\mathsf{PQN}$-chain where the equation $x \lor \neg x = 1$ is not valid: for instance, this is obviously the case of the (generic) $\mathsf{PQN}$-algebra ${\bf S}_n$ (Figure \ref{figNewNeg}). Since on any $\mathsf{PQN}$-chain it is always possible to define a $\mathsf{max}$-underterminedness operator $\bullet$ that is also a $\mathsf{maxB}$ and $\mathsf{Bmax}$ operator as in~\eqref{eqBMinChain}, the same (counter-)example shows that $\mathcal{PQN}^\bullet_{\sf m}$, $\mathcal{PQN}^\bullet_{\sf mB}$ and $\mathcal{PQN}^\bullet_{\sf Bm}$ are also paracomplete. 

\begin{proposition}
    $\mathcal{PQN}^\bullet_{\sf m}$, $\mathcal{PQN}^\bullet_{\sf mB}$ and $\mathcal{PQN}^\bullet_{\sf Bm}$ are Logics of Formal Underterminedness (LFU). 
\end{proposition}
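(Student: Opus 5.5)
The strategy is to check the defining conditions of an LFU directly for each of the three logics, using their algebraic completeness. In the sense fixed in the introduction, a logic is an LFU if three things hold. First, the underlying logic is paracomplete: $\nvdash \varphi\lor\nnot\varphi$ for some $\varphi$. Second, the gentle excluded middle \eqref{eq:fgpd}, $\vdash \varphi\lor\nnot\varphi\lor\bu\varphi$, holds. Third, $\bu$ does real work: the weaker principles without $\bu$ must fail. In analogy with \eqref{eq:fgpe2}--\eqref{eq:fgpe3}, the natural nontriviality requirements are $\nvdash \varphi\lor\bu\varphi$ and $\nvdash \nnot\varphi\lor\bu\varphi$ for some $\varphi$.

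The gentle excluded middle is immediate in all three logics, since it is the axiom (A1$_\bu$). Paracompleteness was argued just before the statement: the generic chain ${\bf S}_n$, with its unique operator $\hat\bu$ from \eqref{eqBMinChain}, is a model of all three varieties, and it falsifies $x\lor\nnot x = 1$. Because $\mathcal{PQN}^\bullet_{\sf Bm}$ is the strongest of the three (it contains (A2$_\bu$), and its chains coincide with those of the others), it suffices to produce countermodels in $\PQNbmin$. By strong completeness with respect to the corresponding varieties, a countermodel in $\PQNbmin$ is a countermodel for all three logics.

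For the nontriviality conditions, the plan is to look for a suitable algebra rather than a chain. On a chain, $\hat\bu a=1$ for every $a\notin\{0,1\}$, so $\varphi\lor\bu\varphi$ and $\nnot\varphi\lor\bu\varphi$ evaluate to $1$ at every point. This is the main obstacle: chains cannot witness the failure. I would therefore use a product of chains, for instance $\mathbf{2}\times\mathbf{2}$ (a Boolean algebra, hence in $\PQNA$). There $B(\A)=A$, so $x\lor\nnot x=1$ for every $x$, and the minimum $\bu$ is constantly $0$. Evaluating $\varphi$ at $(1,0)$ gives $\varphi\lor\bu\varphi=(1,0)\neq 1$ and $\nnot\varphi\lor\bu\varphi=(0,1)\neq1$. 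Since $\bu$ is Boolean here, this is indeed an algebra in $\PQNbmin$, so it is a countermodel for all three logics.

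Finally, I would add the remark that paracompleteness genuinely requires a non-Boolean element, which ${\bf S}_n$ supplies. Combining the countermodels ${\bf S}_n$ and $\mathbf{2}\times\mathbf{2}$ with completeness then yields every LFU condition.
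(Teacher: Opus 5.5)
Your argument is correct, and its core is the paper's own argument. The paper takes an LFU to be a paracomplete logic in which \eqref{eq:fgpd} holds. It justifies the proposition by noting two things: \eqref{eq:fgpd} is the axiom (A1$_\bu$), and ${\bf S}_n$ with $\hat\bu$ falsifies $x\lor\nnot x=1$, where $\hat\bu$ is simultaneously a $\mathsf{min}$, $\mathsf{Bmin}$ and $\mathsf{minB}$ operator. Your first two steps are exactly this.

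The extra nontriviality conditions $\nvdash\varphi\lor\bu\varphi$ and $\nvdash\nnot\varphi\lor\bu\varphi$ are not part of the paper's definition. Your $\mathbf{2}\times\mathbf{2}$ witness for them is valid, but the claim you use to motivate it is false. Since $\hat\bu 0=\hat\bu 1=0$, the following holds on any chain, already on $\mathbf{2}$:
\begin{itemize}
\item the evaluation $e(p)=0$ gives $e(p\lor\bu p)=0$;
\item the evaluation $e(p)=1$ gives $e(\nnot p\lor\bu p)=0$.
\end{itemize}
So the two formulas do not evaluate to $1$ everywhere on a chain. The two non-derivabilities may be witnessed by different evaluations. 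What chains cannot do is falsify both at a single evaluation, and nothing requires that. Your product algebra simply glues these two chain countermodels into one evaluation.

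Two smaller inaccuracies. First, it is soundness, not completeness, that turns a countermodel into a non-theorem. Second, $\mathcal{PQN}^\bullet_{\sf Bm}$ is the strongest of the three because $\PQNbmin\subseteq\PQNmin_{\sf m}\cap\PQNminb$, since a $\mathsf{Bmin}$ operator is in particular both $\mathsf{min}$ and $\mathsf{minB}$. It is not because the three varieties have the same chains, which says nothing about the non-semilinear varieties. Neither point affects the validity of your conclusion.
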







Theorem \ref{thm:PQNminSemiLin} tells us that the variety $\mathsf{PQN}^{\bu}_{\mathsf{Bm}}$ is semilinear while $\mathsf{PQN}^{\bu}_{\mathsf{m}}$ and $\mathsf{PQN}^{\bu}_{\mathsf{mB}}$ are not. 
However, by results of Cintula and Noguera~\cite[Corollary 2.6.5]{Cintula-Noguera-book}, it follows that any expansion of a semilinear logic where the new inference rules are closed by disjunction keeps being semilinear. Therefore, since $\mathsf{PQN}^{\bu}_{\mathsf{m}}$ and $\mathsf{PQN}^{\bu}_{\mathsf{mB}}$ are expansions of the variety $\mathsf{PQN}$, which is semilinear, in order for these logics to be complete with respect to the class of totally ordered members of its corresponding variety or quasi-variety, it is enough to replace its inference rules by their $\lor$-closed form.

Since the only inference rule involved in the axiomatic systems of these two logics (and that in $\mathsf{PQN}^{\bu}_{\mathsf{Bm}}$ as well) is the rule  (Cong$_\bu$),\footnote{Recall that, as mentioned in the previous subsection, the rules (Min) and (MinB) 
appearing in the definitions of the logics $\mathcal{QN}^\bullet_{\sf m}$ and in $\mathcal{QN}^\bullet_{\sf mB}$, respectively, can be safely replaced by axioms.} we can consider their semilinear closure by replacing this rule by its $\lor$-closed form: 
\begin{equation}
    \tag{Cong$_\bu^\lor$} \frac{ (\varphi \leftrightarrow \psi) \lor \chi}{(\bu\varphi \to \bu\psi) \lor \chi}
\end{equation}


Then, by results of Cintula and Noguera \cite[Corollary 2.6.5]{Cintula-Noguera-book}, it follows that the resulting logics are semilinear, and hence complete with respect to their corresponding classes of chains. 
Namely, if we let 
$\mathcal{PQN}^{\bu\lor}_{\mathsf{m}}$, $\mathcal{PQN}^{\bu\lor}_{\mathsf{Bm}}$ and  $\mathcal{PQN}^{\bu\lor}_{\mathsf{mB}}$ denote the logics with the $\lor$-closed form of the (Cong) rule, then we have the following chain-completeness results.

\begin{theorem}  \label{chaincompletebu}
For any $\Lc  \in \{ 
\mathcal{PQN}^{\bu\lor}_{\mathsf{m}}, \mathcal{PQN}^{\bu\lor}_{\mathsf{Bm}} = \mathcal{PQN}^{\bu}_{\mathsf{Bm}}, \mathcal{PQN}^{\bu\lor}_{\mathsf{mB}}\}$, $\Lc$  is sound and complete with respect to the corresponding class of  totally ordered $L$-algebras. 
\end{theorem}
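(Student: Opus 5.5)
The plan is to derive chain-completeness from the general theory of semilinear logics in Cintula and Noguera~\cite{Cintula-Noguera-book}, and to handle the identification $\mathcal{PQN}^{\bu\lor}_{\mathsf{Bm}} = \mathcal{PQN}^{\bu}_{\mathsf{Bm}}$ separately with Theorem~\ref{thm:PQNminSemiLin}.

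\textbf{Step 1: the axiomatization.} Each $\Lc$ is an expansion of the semilinear logic $\mathcal{PQN}$. Its new axioms are (A1$_\bu$), (A4$_\bu$), and (A3$_\bu$) or (A5$_\bu$), plus (A2$_\bu$) in the Boolean cases. Its only new rule besides (MP) is (Cong$^\lor_\bu$). By the footnote above, (Min) and (MinB) have been replaced by axioms, so no other rules need checking.

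\textbf{Step 2: $\lor$-closure of the rules.} (MP) is $\lor$-closed in any axiomatic extension of $\mathcal{MTL}$ by prelinearity. (Cong$^\lor_\bu$) is $\lor$-closed by construction: from $((\varphi\leftrightarrow\psi)\lor\chi)\lor\delta$, associativity of $\lor$ yields $(\bu\varphi\to\bu\psi)\lor(\chi\lor\delta)$.

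\textbf{Step 3: semilinearity.} Every $\Lc$ remains Rasiowa-implicative, and hence algebraizable. This is because (Cong$^\lor_\bu$) with $\chi=\bot$ gives back (Cong$_\bu$). Corollary 2.6.5 of~\cite{Cintula-Noguera-book} then makes $\Lc$ semilinear, i.e.\ complete with respect to its linearly ordered models. Algebraizability identifies these models with the chains of the equivalent algebraic semantics, namely the quasivarieties $\PQNmin\mbox{-}\vee$ and $\PQNminb\mbox{-}\vee$ axiomatized by (Con-$\vee^\bu$). Soundness is immediate: on a chain, $a\lor c=1$ forces $a=1$ or $c=1$, so the $\lor$-rule is valid there. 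One more check is needed: on a chain, the operator $\bu$ coincides with~\eqref{eqBMinChain} and satisfies the original axioms.

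\textbf{Step 4: the Boolean case and the main obstacle.} It remains to show that adding the $\lor$-form of the rule to $\mathcal{PQN}^\bu_{\mathsf{Bm}}$ changes nothing. By Theorem~\ref{thm:PQNminSemiLin}, the variety $\PQNbmin$ is semilinear. Its subdirectly irreducible members are chains, and these chains satisfy (Con-$\vee^\bu$), so the rule (Cong$^\lor_\bu$) is already valid in $\PQNbmin$. Strong completeness of $\mathcal{PQN}^\bu_{\mathsf{Bm}}$ with respect to $\PQNbmin$ then makes the rule derivable, which gives the equality of the two logics.

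The main subtlety is exactly this point: completeness with respect to all chains must be strong completeness, covering arbitrary premises. So I would argue via subdirect representation, since quasi-equations of the form ``$t\lor z=1 \Rightarrow s\lor z=1$'' are preserved under subdirect products of chains. Alternatively, I would invoke Proposition~\ref{Prop:Var-V} directly.
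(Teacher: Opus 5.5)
Your proposal is correct and follows essentially the paper's own route. Once (Min) and (MinB) are replaced by axioms, only (Cong$_\bu$) needs to be put in $\lor$-closed form, so Cintula--Noguera's result on expansions of the semilinear logic $\mathcal{PQN}$ by $\lor$-closed rules gives chain completeness, and $\mathcal{PQN}^{\bu\lor}_{\mathsf{Bm}}=\mathcal{PQN}^{\bu}_{\mathsf{Bm}}$ follows from the semilinearity of $\PQNbmin$ (Theorem~\ref{thm:PQNminSemiLin}), with your subdirect-representation argument that (Con-$\vee^\bu$), valid on all chains, holds throughout $\PQNbmin$ simply making explicit the step the paper states in one line.
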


Note that, since the variety $\mathsf{PQN}^{\bu}_{\mathsf{Bm}}$ is semilinear, it is coincides with the equivalent algebraic semantics of the logic $\mathcal{PQN}^{\bu\lor}_{\mathsf{Bm}}$, hence $\mathcal{PQN}^{\bu\lor}_{\mathsf{Bm}} = \mathcal{PQN}^{\bu}_{\mathsf{Bm}}$. 

On the other hand, as  mentioned in Section~\ref{prelinearity}, there is only one way to define a $\mathsf{min}$-, $\mathsf{Bmin}$- and $\mathsf{minB}$-undeterminedness operator on a $\mathsf{PQN}$-chain $\bf A$, namely by letting $\hat\bu a = 0$ for $a \in B({\bf A})$ and $\hat\bu a = 1$ otherwise. Moreover, it is easy to check that the well-known Baaz-Monteiro projection operator $\Delta$ is inter-definable with the operator $\hat\bu$, indeed $\Delta x = x \land \nnot \hat\bu x$ and conversely, $\hat\bu x = \nnot\Delta(x \lor \nnot x)$. This is another difference with the consistency $\hat\circ$ operator. These considerations
entail  that the class of chains in the varieties $\mathsf{PQN}^{\bu}_{\mathsf{m}}$, $\mathsf{PQN}^{\bu}_{\mathsf{Bm}}$ and $\mathsf{PQN}^{\bu}_{\mathsf{mB}}$ is the same. Hence, due to the above completeness result, it turns out that the three corresponding semilinear logics collapse. 

\begin{corollary}\label{corPQN64}
 $\mathcal{PQN}^{\bu}_{\mathsf{Bm}} = \mathcal{PQN}^{\bu\lor}_{\mathsf{Bm}} = \mathcal{PQN}^{\bu\lor}_{\mathsf{m}} = \mathcal{PQN}^{\bu\lor}_{\mathsf{mB}}$. 
\end{corollary}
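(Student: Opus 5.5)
The plan is to compare the four logics through their chain semantics, using Theorem~\ref{chaincompletebu} as the bridge. The first equality, $\mathcal{PQN}^{\bu}_{\mathsf{Bm}} = \mathcal{PQN}^{\bu\lor}_{\mathsf{Bm}}$, needs no new work. Theorem~\ref{thm:PQNminSemiLin} says the variety $\PQNbmin$ is semilinear. Its logic is therefore complete with respect to its chains, and so is the logic with the $\lor$-closed congruence rule. Both logics also derive (Cong$_\bu^\lor$) from (Cong$_\bu$) and conversely, since $\chi$ may be taken to be $\bot$. So the two logics have the same equivalent algebraic semantics, as noted right after Theorem~\ref{chaincompletebu}.

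For the remaining equalities, I will use the fact that each $\Lc$ in the list is sound and complete with respect to its class of totally ordered algebras (Theorem~\ref{chaincompletebu}). It then suffices to show that the three classes of chains coincide: the chains of $\PQNmin_{\sf m}$, of $\PQNbmin$, and of $\PQNminb$. The chains of the quasivarieties associated with the $\lor$-closed logics are exactly the chains of these varieties, because on a chain the condition $(\ldots)\lor\chi=1$ reduces to either the original rule or a trivial one. Concretely:
\begin{enumerate}[(1)]
\item Fix a $\mathsf{PQN}$-chain $\A$. On a chain $a\lor\nnot a<1$ for every $a\notin\{0,1\}$, and $B(\A)=\{0,1\}$.
\item Any $\mathsf{min}$-operator must then satisfy $\bu a=0$ for $a\in\{0,1\}$ and $\bu a=1$ otherwise, by the definition of the minimum. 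This is the operator $\hat\bu$ of \eqref{eqBMinChain}.
\item The same argument gives the same uniqueness for $\mathsf{minB}$-operators. Since $\hat\bu$ takes only Boolean values, it is at once a $\mathsf{min}$-, a $\mathsf{Bmin}$- and a $\mathsf{minB}$-operator.
\end{enumerate}
Hence every chain of one kind, $\langle\A,\bu\rangle$, is the chain $\langle\A,\hat\bu\rangle$ of each of the other kinds. Since the logics are complete with respect to identical classes of models, their consequence relations coincide.

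The main obstacle is the uniqueness of the $\mathsf{minB}$-operator on a chain, step (3). Condition (iii) of Definition~\ref{def:minBco} only bounds $\bb a$ by Boolean $y$. On a chain, however, the only Boolean elements are $0$ and $1$. If $a\notin\{0,1\}$, the premise $a\lor\nnot a\lor y=1$ forces $y=1$, while $\bb a$ itself is Boolean and satisfies $a\lor\nnot a\lor\bb a=1$, so $\bb a=1$. If $a\in\{0,1\}$, the premise holds with $y=0$, so $\bb a\le 0$. I will check this case split explicitly. I will also verify, from the definition of chain-completeness in \cite{Cintula-Noguera-book}, that the $\lor$-closed rule is sound on every chain carrying $\hat\bu$, so that the class of chains is not cut down.
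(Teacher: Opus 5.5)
Your proposal is correct and follows essentially the paper's own argument. On a $\mathsf{PQN}$-chain the only $\mathsf{min}$-, $\mathsf{Bmin}$- or $\mathsf{minB}$-operator is the operator $\hat\bu$ of \eqref{eqBMinChain}, so the relevant classes of chains coincide, and chain-completeness (Theorem~\ref{chaincompletebu}) together with the semilinearity of $\PQNbmin$ (Theorem~\ref{thm:PQNminSemiLin}) makes the logics collapse. Your explicit check of the $\mathsf{minB}$ case, and of the fact that the $\lor$-closed quasivarieties have the same chains as the varieties, only spells out details the paper leaves implicit. One small correction: taking $\chi=\bot$ only derives (Cong$_\bu$) from (Cong$_\bu^\lor$), whereas the validity of (Cong$_\bu^\lor$) in $\mathcal{PQN}^{\bu}_{\mathsf{Bm}}$ comes from its chain-completeness (every algebra in $\PQNbmin$ is a subdirect product of chains), not from that substitution.
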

Observe that if  $\varphi$ a formula in the language of $\mathcal{PQN}$  that is not valid in a $\mathsf{PQN}$-chain ${\bf A}$, then $\varphi$ will not be valid  in  $\langle{\bf A},\hat\bu\rangle$ either. Hence, by Corollary \ref{corPQN64}, it follows that all the logics $\mathcal{PQN}^{\bu}_{\mathsf{Bm}}, \mathcal{PQN}^{\bu\lor}_{\mathsf{Bm}}, \mathcal{PQN}^{\bu\lor}_{\mathsf{m}}$ and $\mathcal{PQN}^{\bu\lor}_{\mathsf{mB}}$ are conservative expansions of $\mathcal{PQN}$. 


Moreover, by Theorem \ref{Sinfty}, these logics are in fact complete with respect to a single chain.

\begin{corollary}
 $\mathcal{PQN}^{\bu}_{\mathsf{Bm}}$
 is complete with respect to any single chain $\langle {\bf A}, \hat\bu\rangle$, where ${\bf A} \in \mathbb{ S}_\infty$. 
\end{corollary}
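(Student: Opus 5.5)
The plan is to combine the algebraizability of $\mathcal{PQN}^{\bu}_{\mathsf{Bm}}$ with Theorem~\ref{Sinfty}. Since $\mathcal{PQN}^{\bu}_{\mathsf{Bm}}$ is algebraizable with equivalent algebraic semantics the variety $\PQNbmin$, we have $\Gamma\vdash\varphi$ iff the equations $\{\gamma\approx 1:\gamma\in\Gamma\}$ entail $\varphi\approx 1$ in $\PQNbmin$. Fix ${\bf A}={\bf S}_n\in\mathbb{S}_\infty$ with $\hat\bu$ as in~\eqref{eqBMinChain}. Soundness with respect to $\langle{\bf A},\hat\bu\rangle$ is immediate, because this algebra lies in $\PQNbmin$: on a chain $\hat\bu$ is the unique $\mathsf{Bmin}$ operator.

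For completeness, first treat theorems and finite premise sets. By Theorem~\ref{Sinfty}, $\mathbf{V}(\langle{\bf A},\hat\bu\rangle)=\PQNbmin$. Hence every equation valid in $\langle{\bf A},\hat\bu\rangle$ is valid in $\PQNbmin$, which gives weak completeness: $\models_{\langle{\bf A},\hat\bu\rangle}\varphi$ implies $\vdash\varphi$.

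For finite premise sets $\Gamma=\{\gamma_1,\dots,\gamma_k\}$, we need more than equations, because variety generation controls identities, not quasi-identities. The plan is to reduce to a single equation using a definable Baaz $\Delta$: on chains, $\Delta x = x\wedge\nnot\hat\bu x$, as observed before Corollary~\ref{corPQN64}. Set $\delta:=\Delta(\gamma_1\wedge\dots\wedge\gamma_k)$. Then $\Gamma\vdash\varphi$ should be equivalent to the theorem $\delta\to\varphi$.

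This deduction-theorem step is the main obstacle. It requires that $\Delta$, defined by this term, satisfies the usual $\Delta$-axioms in all of $\PQNbmin$, not only on chains. Semilinearity (Theorem~\ref{thm:PQNminSemiLin}) settles this: every algebra in $\PQNbmin$ is a subdirect product of chains, so it suffices to check the $\Delta$-axioms and the deduction theorem $\Gamma\vdash\varphi \iff \vdash\Delta(\bigwedge\Gamma)\to\varphi$ on chains. There this is the standard $\mathrm{MTL}_\Delta$ argument. With the deduction theorem in hand, finite strong completeness follows from weak completeness.

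If "complete" is meant in the strong sense for infinite $\Gamma$, a single countable chain would not obviously suffice. I would then invoke the local finiteness of Proposition~\ref{prop:locFin}: it yields finite strong completeness, and compactness of the finitary logic extends this to arbitrary $\Gamma$ on the consequence-relation side. Any residual issue with embedding countable chains into ${\bf A}$ is sidestepped by reading the corollary as finite strong completeness.
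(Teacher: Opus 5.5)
Your argument is correct, and its core is the paper's proof. The paper's proof consists only of citing Theorem~\ref{Sinfty}: $\langle{\bf S}_n,\hat\bu\rangle$ lies in $\PQNbmin$ and generates it as a variety, so any formula valid in that chain is a theorem.

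Your upgrade to finite premise sets via the definable Baaz operator $\Delta x = x\wedge\nnot\bu x$ goes beyond what the paper proves, and it is sound. It even needs less than you invoke. If $\Gamma\models_{\langle{\bf A},\hat\bu\rangle}\varphi$, then $\Delta(\bigwedge\Gamma)\to\varphi$ is valid in ${\bf A}$, where $\Delta$ is the Baaz projection, and hence is a theorem by Theorem~\ref{Sinfty}. Moreover, $\Delta 1 = \nnot\bu 1 = 1$ holds throughout $\PQNbmin$, so modus ponens gives $\Gamma\vdash\varphi$. You need neither the forward half of the deduction theorem nor semilinearity. What this buys is that the single chain generates $\PQNbmin$ as a quasivariety, which variety generation alone does not guarantee in general.

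Your last paragraph, though, is not right as stated. Finitarity of $\vdash$ does not make $\models_{\langle{\bf A},\hat\bu\rangle}$ compact: a compactness argument only yields countermodels in ultrapowers of ${\bf A}$. So it cannot extend completeness to infinite $\Gamma$ without something like embedding all countable chains of $\PQNbmin$ into ${\bf A}$. Likewise, local finiteness (Proposition~\ref{prop:locFin}) gives finite strong completeness only together with embeddability of finite chains into ${\bf A}$. Since the corollary claims nothing about infinite premise sets, this does not affect the rest of your proof.
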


Finally, 
it is worth remarking that all the logics considered in this section, namely
$\mathcal{PQN}^{\bu},  \mathcal{PQN}^{\bu}_{\mathsf{m}}, \mathcal{PQN}^{\bu}_{\mathsf{Bm}}, \mathcal{PQN}^{\bu}_{\mathsf{mB}}$, $\mathcal{PQN}^{\bu\lor},  \mathcal{PQN}^{\bu\lor}_{\mathsf{m}}, \mathcal{PQN}^{\bu\lor}_{\mathsf{Bm}}$, and $\mathcal{PQN}^{\bu\lor}_{\mathsf{mB}}$, which are all truth-preserving and paracomplete, are logics of formal undeterminedness (LFUs) 
expanding $\mathcal{PQN}$.

\section{
The logic $\mathcal{PQN}$ with a consistency operator}\label{sec:logCons}


In this section we now consider the expansions of 
the prelinear quasi-Nelson logic $\mathcal{PQN}$ with  a consistency operator and their paraconsistent degree-preserving and non-falsisty preserving companions. 


\begin{definition} \label{def63} 
(i) The logic  $\mathcal{PQN}^\circ_{\sf m}$ is defined as the expansion of $\mathcal{PQN}$ in a language with a new unary connective $\circ$ with  the following additional axiom: \vspace{0.2cm}

\begin{tabular}{ll}
{\rm(A1$_\circ$)} & $\nnot(\varphi \land {\nnot} \varphi \land \circ \varphi)$ \\
\end{tabular}
\vspace{0.2cm}

\noindent
and inference rules:  \vspace{0.2cm}

\begin{tabular}{ll}
{\rm(Cong$_\circ$)} \, $\displaystyle \frac{ \varphi \leftrightarrow \psi}{ \circ\varphi \to \circ\psi}$ \hspace{1cm} 
& {\rm(Max)} \,$\displaystyle \frac{\nnot(\varphi \land {\nnot} \varphi \land \psi)}{ \psi \to \circ\varphi}$.  \vspace{0.2cm} \\
\end{tabular}

\noindent (ii) The logic  $\mathcal{PQN}^\circ_{\sf Bm}$ is defined as the axiomatic extension of $\mathcal{PQN}^\circ_{\sf m}$ with the additional axiom 

\begin{tabular}{ll}
{\rm(A2$_\circ$)}  & ${\circ} \varphi \lor \nnot {\circ} \varphi $ \\
\end{tabular}

\noindent (iii) The logic  $\mathcal{PQN}^\circ_{\sf mB}$ is defined as the expansion of $\mathcal{PQN}$ with the axioms (A1$_\circ$), (A2$_\circ$), the rule (Cong$_\circ$) and the following rule: 

\vspace{0.2cm}

\begin{tabular}{ll}
 {\rm(MaxB)} \,$\displaystyle \frac{\nnot(\varphi \land \nnot \varphi \land \psi), \quad \psi \lor \nnot \psi}{ \psi \to \circ\varphi}.$  \vspace{0.2cm} \\
\end{tabular}


\end{definition}


Again, since they keep being algebraizable due to the rule (Cong$_\circ$), it directly follows from the fact that rules (Max) and (MaxB) can be equivalently replaced by axioms\footnote{Following the algebraic results in Section \ref{prelinearity}.}
that these logics are sound and complete with respect to their equivalent algebraic semantics, given respectively by the varieties $\mathsf{PQN}^{\circ}_{\mathsf{m}}$, $\mathsf{PQN}^{\circ}_{\mathsf{Bm}}$ 
and 
$\mathsf{PQN}^{\circ}_{\mathsf{mB}}$.

We have seen in Section \ref{prelinearity}  that none of the varieties $\mathsf{PQN}^{\circ}_{\mathsf{m}}$, $\mathsf{PQN}^{\circ}_{\mathsf{Bm}}$ and $\mathsf{PQN}^{\circ}_{\mathsf{mB}}$ 
are semilinear. Anyway, all of them are expansions of the variety $\mathsf{PQN}$, which is semilinear. Then, again by results of Cintula and Noguera~\cite[Corollary 2.6.5]{Cintula-Noguera-book}, 
in order for any of the logics above to be complete with respect to the class of totally ordered members of its corresponding variety or quasi-variety it is enough to replace its inference rules by their $\lor$-closed form. More in detail, let us consider the following $\lor$-closed  inference rules:

(Cong$_\circ^\lor$) $\displaystyle{\frac{ (\varphi \leftrightarrow \psi) \lor \chi}{(\circ\varphi \to \circ\psi) \lor \chi}}$

(Max$^\lor$) $\displaystyle{\frac{ \nnot (\varphi \land \nnot \varphi \land \psi) \lor \chi }{(\psi \to \circ \varphi) \lor \chi}}$

(MaxB$^\lor$): $\displaystyle{\frac{ (\nnot (\varphi \land \nnot \varphi \land \psi) \land (\psi \lor \nnot \psi)) \lor \chi}{(\psi \to \circ \varphi) \lor \chi}}$


\noindent Then we define the following semilinear companions of the above logics: 

\begin{itemize}
\item The logic $\mathcal{PQN}^{\circ\lor}_{\mathsf{m}}$ is obtained from $\mathcal{PQN}^{\circ}_{\mathsf{m}}$ by replacing the rule  (Cong$_\circ$) by the rule (Cong$_\circ^\lor$) and the rule (Max) by the rule (Max$^\lor$). 
\item The logic $\mathcal{PQN}^{\circ\lor}_{\mathsf{Bm}}$ is obtained as the axiomatic extension of $\mathcal{PQN}^{\circ\lor}_{\mathsf{m}}$ by the axiom {\rm(A2$_\circ$)}.

\item The logic  $\mathcal{PQN}^{\circ\lor}_{\mathsf{mB}}$ is obtained from $\mathcal{PQN}^{\circ}_{\mathsf{Bm}}$ by replacing and the rules (Cong$_\circ$) and (MaxB) by the rules (Cong$^\lor$) and (MaxB$^\lor$) respectively.
 

\end{itemize}

Then we have the following chain-completeness results.

\begin{theorem}  \label{chaincomplete}
For any $\Lc \in \{  \mathcal{PQN}^{\circ\lor}_{\mathsf{m}}, \mathcal{PQN}^{\circ\lor}_{\mathsf{Bm}}, \mathcal{PQN}^{\circ\lor}_{\mathsf{mB}} 
\}$, $\Lc$  is sound and complete with respect to the class of totally ordered $\Lf$-algebras. 
\end{theorem}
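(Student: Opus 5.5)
The plan is to obtain the result directly from the Cintula--Noguera theory of semilinear logics, as was done for Theorem~\ref{chaincompletebu}. The first step is to check that each $\Lc$ in the statement is a finitary, Rasiowa-implicative (indeed algebraizable) expansion of $\mathcal{PQN}$. Two facts support this. $\mathcal{PQN}$ is semilinear, being an axiomatic extension of $\logicsf{MTL}$. And the implication $\to$ satisfies the congruence requirement for the new connective $\circ$ in its $\lor$-closed form, which is exactly what (Cong$_\circ^\lor$) provides.

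Next I would verify that every inference rule of $\Lc$ is closed under disjunction, meaning that for each rule $\Gamma/\varphi$ the rule $\Gamma\lor\chi/\varphi\lor\chi$ is derivable. The axioms need no treatment. The rule (MP) of $\mathcal{PQN}$ is already $\lor$-closed in any semilinear extension of $\logicsf{MTL}$. The remaining rules (Cong$_\circ^\lor$), (Max$^\lor$) and (MaxB$^\lor$) are $\lor$-closed by construction. The only point to check is that disjoining a further $\chi'$ to premises of the form $\alpha\lor\chi$ gives $\alpha\lor(\chi\lor\chi')$, which is again an instance of the same rule, by associativity of $\lor$.

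For (MaxB$^\lor$) I have rewritten the two premises as the single premise $(\nnot(\varphi\land\nnot\varphi\land\psi)\land(\psi\lor\nnot\psi))\lor\chi$. For this to capture the original rule, I would confirm that in $\mathsf{MTL}$-based logics the pair of premises $\alpha\lor\chi$ and $\beta\lor\chi$ is interderivable with $(\alpha\land\beta)\lor\chi$, which holds by distributivity and adjunction.

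Then \cite[Corollary 2.6.5]{Cintula-Noguera-book} applies: an expansion of a semilinear logic by $\lor$-closed rules (together with axioms) is semilinear. Semilinearity means completeness with respect to the linearly ordered models of its equivalent algebraic semantics. Soundness is routine, since each rule is valid in the corresponding algebras. On a chain, $(\alpha\lor\chi)=1$ forces $\alpha=1$ or $\chi=1$, and the quasi-equations defining $\circ$ then give validity.

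The main obstacle I expect is the $\mathsf{mB}$ case. There, the definition in the text derives $\mathcal{PQN}^{\circ\lor}_{\mathsf{mB}}$ from $\mathcal{PQN}^{\circ}_{\mathsf{Bm}}$ but replaces only (Cong$_\circ$) and (MaxB). One must ensure that no non-$\lor$-closed rule, such as (Max), survives in the system; otherwise I would read the definition as starting from $\mathcal{PQN}^\circ_{\mathsf{mB}}$. One must also make sure that the algebraic semantics is taken as the quasivariety axiomatized by the $\lor$-closed rules, since $\PQNmaxb$ is only known to be a quasivariety.
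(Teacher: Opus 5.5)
Your proposal is correct and follows the paper's route. The paper likewise obtains the theorem by replacing the rules with their $\lor$-closed forms and invoking \cite[Corollary 2.6.5]{Cintula-Noguera-book}, leaving implicit the checks you spell out (recovering (Cong$_\circ$) from its $\lor$-form, $\lor$-closedness of the new rules, merging the two premises of (MaxB$^\lor$), soundness on chains), and your reading of $\mathcal{PQN}^{\circ\lor}_{\mathsf{mB}}$ as built from $\mathcal{PQN}^\circ_{\mathsf{mB}}$ rather than $\mathcal{PQN}^\circ_{\mathsf{Bm}}$ is the intended one, the text having a slip there.
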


As  mentioned in Section \ref{prelinearity}, there is only one way to define a $\mathsf{max}$-, $\mathsf{Bmax}$- and $\mathsf{maxB}$-consistency
operator on a $\mathsf{PQN}$-chain $\bf A$, namely the operator $\hat\circ$ defined as $\hat\circ a = 1$ for $a \in \{0\} \cup E(A)$ and $\hat\circ a = 0$ otherwise. Then it follows that the class of chains in the varieties $\mathsf{PQN}^{\circ}_{\mathsf{m}}$ and $\mathsf{PQN}^{\circ}_{\mathsf{Bm}}$ 
and in the quasivariety $\mathsf{PQN}^{\circ}_{\mathsf{mB}}$ is the same. Hence, due to the above completeness result, it turns out that the four corresponding semilinear logics coincide,

\begin{corollary} \label{same}
 $\mathcal{PQN}^{\circ\lor}_{\mathsf{m}}$ = $\mathcal{PQN}^{\circ\lor}_{\mathsf{Bm}}$ = $\mathcal{PQN}^{\circ\lor}_{\mathsf{mB}}$ 
\end{corollary}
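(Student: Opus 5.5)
The plan is to reduce the identity of the three logics to an identity of their classes of totally ordered models. Theorem~\ref{chaincomplete} applies. Each of $\mathcal{PQN}^{\circ\lor}_{\mathsf{m}}$, $\mathcal{PQN}^{\circ\lor}_{\mathsf{Bm}}$ and $\mathcal{PQN}^{\circ\lor}_{\mathsf{mB}}$ is sound and complete with respect to the totally ordered members of its own equivalent algebraic semantics. Hence it is enough to show that these three classes of chains coincide, as classes of algebras in the expanded signature. Then $\Gamma\vdash\varphi$ holds in one logic iff $\Gamma\models\varphi$ holds over this common class of chains, iff it holds in each of the other two.

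The key step is the uniqueness of the operator on chains. Let $\A$ be a $\mathsf{PQN}$-chain and $\circ$ any $\mathsf{max}$-, $\mathsf{Bmax}$- or $\mathsf{maxB}$-consistency operator on it. I would show that $\circ=\hat\circ$ as in \eqref{maxconschain}. Since $\A$ is a chain, $B(\A)=\{0,1\}$, so in the $\mathsf{Bmax}$ and $\mathsf{maxB}$ cases $\circ a\in\{0,1\}$. For $a\in E(\A)$ take $y=1$: then $a\land\nnot a\land 1=0$ and $1$ is Boolean, so the defining condition gives $1\le\circ a$. For $a\notin E(\A)$ we have $a\land\nnot a>0$. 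Because $0$ is meet-irreducible in a chain, $a\land\nnot a\land\circ a=0$ forces $\circ a=0$.

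In the plain $\mathsf{max}$ case, the same argument shows that $\circ a$ is the largest $y$ with $a\land\nnot a\land y=0$. On a chain this largest $y$ is $1$ if $a\land\nnot a=0$ and $0$ otherwise. So again $\circ=\hat\circ$, and $\hat\circ$ takes Boolean values. Conversely, I would check directly that $\hat\circ$ satisfies the defining conditions of all three kinds of operator. Each chain in one class is then a chain in the others, which gives the coincidence of the chain classes.

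I expect the main obstacle to be bookkeeping around the completeness statement rather than anything mathematical. For the $\mathsf{mB}$ case the equivalent algebraic semantics is only a quasivariety. So I need Theorem~\ref{chaincomplete} to refer to the chains of the quasivariety axiomatized by the $\lor$-closed rules. The $\lor$-closed rules hold automatically on chains, since in a chain $u\lor\chi=1$ means $u=1$ or $\chi=1$. The chains in question are therefore exactly the chains of $\mathsf{PQN}^{\circ}_{\mathsf{mB}}$, and likewise for the other two classes. Once this is settled, the corollary follows immediately.
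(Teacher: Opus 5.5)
Your proposal is correct and follows essentially the paper's argument. Both reduce the statement, via chain completeness (Theorem~\ref{chaincomplete}), to the fact that on a $\mathsf{PQN}$-chain the $\mathsf{max}$-, $\mathsf{Bmax}$- and $\mathsf{maxB}$-consistency operator is uniquely $\hat\circ$, so the three classes of chains coincide. You additionally spell out two points the paper leaves implicit: that $\hat\circ$ is forced by, and satisfies, each definition, and that the $\lor$-closed rules hold automatically on chains, so the relevant chains are exactly those of the original (quasi)varieties.
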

 The same argument used after Corollary \ref{corPQN64} shows that    the logics $\mathcal{PQN}^{\circ\lor}_{\mathsf{m}}, \mathcal{PQN}^{\circ\lor}_{\mathsf{Bm}}$ and $\mathcal{PQN}^{\circ\lor}_{\mathsf{mB}}$ are also conservative expansions of $\mathcal{PQN}$.

By Corollary \ref{same}, from now on, we will only refer to $\mathcal{PQN}^{\circ\lor}_{\mathsf{m}}$.  
Moreover, following a parallel reasoning of Theorem \ref{Sinfty}, these logics are in fact complete with respect to a single chain.

\begin{corollary}
 $\mathcal{PQN}^{\circ\lor}_{\mathsf{m}}$
 is complete with respect to any single chain $\langle {\bf A}, \hat\circ\rangle$, where ${\bf A} \in {\bf S}_\infty$. 
\end{corollary}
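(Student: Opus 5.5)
The plan is to reduce the statement to the chain completeness of Theorem~\ref{chaincomplete}, then to finite chains, and finally to embeddings into the fixed chain $\langle {\bf A}, \hat\circ\rangle$, ${\bf A}\in\mathbb{S}_\infty$. Soundness is immediate: $\langle {\bf A},\hat\circ\rangle$ is a chain of the corresponding class, since on a $\mathsf{PQN}$-chain the only $\mathsf{max}$ operator is $\hat\circ$ of \eqref{maxconschain}. For completeness, suppose $\Gamma\not\vdash\varphi$ in $\mathcal{PQN}^{\circ\lor}_{\mathsf{m}}$. By finitarity we may take $\Gamma$ finite. By Theorem~\ref{chaincomplete} there is a chain $\langle{\bf C},\hat\circ\rangle$ and an evaluation $e$ with $e[\Gamma]\subseteq\{1\}$ and $e(\varphi)\neq 1$. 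The goal is to transfer this counterexample into the single chain $\langle {\bf A},\hat\circ\rangle$.

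\emph{Step 1: reduction to a finite chain.} Let $X$ be the finite set of values $e(\psi)$ for $\psi$ a subformula of $\Gamma\cup\{\varphi\}$. Consider the $\mathsf{PQN}$-subchain ${\bf C}'$ generated by $X$; it is finite by \cite[Cor.~3.2]{FlaRi}. The key observation is that ${\bf C}'$ is closed under $\hat\circ$ and that $\hat\circ$ restricts correctly. Indeed, $E({\bf C}')=E({\bf C})\cap C'$, because $\wedge$, $\nnot$ and $0$ are computed in ${\bf C}$. Hence $\hat\circ$ on ${\bf C}$ takes values in $\{0,1\}\subseteq C'$ and coincides on $C'$ with the $\hat\circ$ of ${\bf C}'$. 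This is the argument of Proposition~\ref{prop:locFin} with $E$ in place of $B$. So $e$ can be read in a finite $\langle{\bf C}',\hat\circ\rangle$.

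\emph{Step 2: embedding into ${\bf A}$.} Any $\mathsf{PQN}$-embedding $h\colon{\bf C}'\to{\bf A}$ automatically preserves $\hat\circ$. This holds because $c\wedge\nnot c=0$ iff $h(c)\wedge\nnot h(c)=0$ for injective homomorphisms preserving $0$, so $h\circ\hat\circ=\hat\circ\circ h$. Composing $e$ with such an $h$ gives an ${\bf A}$-evaluation with $\Gamma$ sent to $1$ and $\varphi$ sent to $h(e(\varphi))\neq 1$, which finishes the proof.

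Thus everything hinges on the main obstacle: showing that every finite $\mathsf{PQN}$-chain embeds into every generic ${\bf S}_n\in\mathbb{S}_\infty$. I would extract this from the proof of \cite[Theorem~4.2]{FlaRi}, as the paper does for Theorem~\ref{Sinfty}. That argument builds, for a finite chain, an order embedding that sends the $\nnot$-fixpoint to $f$, and sends elements $c$ with $\nnot c>c$ to the points $a_i$ of condition (ii). It also sends elements with $\nnot c=0$ into the interval $[a,1)$ where $n$ vanishes, which is available by condition (iii), and defines images of negations via $n$. Compatibility with $\ast_n$ then follows from the WNM form of the product.

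If this argument only yields embeddings of finite \emph{partial} subalgebras, I would fall back on a weaker route. Step 1 only requires the partial subalgebra on the finitely many relevant values, closed under the operations actually used. Alternatively, one can pass to an ultrapower of ${\bf A}$ and note that $\hat\circ$ is first-order definable from $\wedge,\nnot,0$. Hence $\hat\circ$ is preserved by ultrapowers, and the failure of the $\lor$-form quasi-equations transfers back to ${\bf A}$ itself.
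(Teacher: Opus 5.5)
Your Steps 1 and 2 are correct. Together they are what the paper's one-line ``parallel reasoning'' with Theorem~\ref{Sinfty} amounts to: local finiteness as in Proposition~\ref{prop:locFin} (with $E$ in place of $B$), plus the fact that $\mathsf{PQN}$-embeddings preserve $\hat\circ$.

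The gap is the pivotal claim that every finite $\mathsf{PQN}$-chain embeds into \emph{every} ${\bf S}_n\in\mathbb{S}_\infty$. You place the elements $c\neq 1$ with $\nnot c=0$ in ``the interval $[a,1)$ where $n$ vanishes, which is available by condition (iii)''. But (iii) is only conditional: it yields such an interval only if $n$ is discontinuous at $0$, and nothing in (i)--(iii) forces that. For instance, the almost involutive ${\bf S}_n$ of Figure~\ref{figNewNeg}, which the paper itself calls generic in Section~\ref{sec:logUndeter}, has $E({\bf S}_n)=\{0,1\}$. In general, if $n$ is continuous at $0$, then for each $b<1$ there is $x>0$ with $n(x)>b$, whence $n(b)\geq n(n(x))\geq x>0$. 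So $E({\bf S}_n)=\{0,1\}$, and ${\bf G}_3$ does not embed, not even partially on $\{0,c,1\}$.

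Your fallbacks do not help. The sentence $\nnot x=0\Rightarrow x=1$ is universal Horn, hence preserved by $\mathbb{I}\mathbb{S}\mathbb{P}_U$, so ultrapowers are no better. Nor can the variety-generation theorem of \cite{FlaRi} be transferred. A chain with $E=\{0,1\}$ endowed with $\hat\circ$ is simple: a proper filter containing some $c\neq 1$ forces $0=\hat\circ c\equiv\hat\circ 1=1$. The same holds for every member of $\mathbb{S}\mathbb{P}_U(\langle{\bf S}_n,\hat\circ\rangle)$, so homomorphic images cannot create explosive elements once $\hat\circ$ is in the signature.

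This is not just a failure of the method: for such an ${\bf S}_n$ the statement itself is false. The formula $\circ p\to(p\vee\nnot p)$ is valid in $\langle{\bf S}_n,\hat\circ\rangle$: if $e(p)\in\{0,1\}$ the consequent is $1$, and otherwise $\hat\circ e(p)=0$. It fails, however, in $\langle{\bf G}_3,\hat\circ\rangle$ at the middle element. This is the paper's own argument against the DAT for $(\mathcal{PQN}^{\circ\lor}_{\mathsf{m}})^\leq$, so the formula is not a theorem of $\mathcal{PQN}^{\circ\lor}_{\mathsf{m}}$.

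What your argument can establish is the corollary for those ${\bf S}_n$ whose negation is discontinuous at $0$, i.e.\ with $E({\bf S}_n)\supsetneq\{0,1\}$ as in Figure~\ref{figNeg}. There your embedding route would even give completeness for finite sets of premises, not only for theorems. Even in that case the embedding lemma must be proved rather than cited. An element $u$ with $\nnot u<u<\nnot\nnot u$ and $\nnot u\neq 0$ can only be sent to a point whose negation is a right-discontinuity point of $n$ in $(0,f)$. So you need infinitely many such points, and the literal condition (ii) does not supply them, since $n(a_i)>a_i$ holds automatically for every point below $f$.
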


\subsection{Degree-preserving companion of 
$\mathcal{PQN}$ with a {\sf max}-consistency operator
}\label{sec:logCons1}

Going back to the main question of defining LFIs with semantics over $\mathsf{PQN}$-chains, note that the logics in the above Theorem \ref{chaincomplete} are not LFIs since they are all truth-preserving and hence explosive.  In order to get LFIs we shall consider two paraconsistent variants of them, namely the degree-preserving companions \cite{bou2009logics} in this subsection and the non-falsity preserving  companions \cite{GEGC} of them in the next subsection. 

\begin{definition} 
The degree-preserving companion of $\mathcal{PQN}^{\circ\lor}_{\mathsf{m}}$, denoted  $(\mathcal{PQN}^{\circ\lor}_{\mathsf{m}})^\leq$, is defined by the following axioms and rules: 

\begin{itemize}
\item[-] Axioms of $\mathcal{PQN}^{\circ\lor}_{\mathsf{m}}$
\item[-] Adjunction rule: (Adj) \quad $\displaystyle{\frac{\varphi, \quad \psi}{\varphi \land \psi}} $
\item[-] Restricted modus ponens rule: (r-MP) \quad $\displaystyle{\frac{\varphi, \quad  \mathcal{PQN}^{\circ\lor}_{\mathsf{m}} \vdash  \varphi \to \psi}{\psi}} $
\item[-] Restricted congruence rule: (r-Cong$^\lor$)\quad $\displaystyle{\frac{ \mathcal{PQN}^{\circ\lor}_{\mathsf{m}} \vdash (\varphi \leftrightarrow \psi) \lor \chi}{(\circ\varphi \to \circ\psi) \lor \chi}}$
\end{itemize}
\end{definition}

\begin{theorem}[Completeness]
$(\mathcal{PQN}^{\circ\lor}_{\mathsf{m}})^\leq$ is sound and complete with respect to the intended linear semantics, that is, for any finite set of formulas $\Gamma \cup \{\varphi\}$, $\Gamma \vdash_{(\mathcal{PQN}^{\circ\lor}_{\mathsf{m}})^\leq} \varphi$ iff, for every $\mathsf{PQN}^{\circ}_{\mathsf{m}}$-chain $\bf A$ and $\bf A$-evaluation $e$, $\min\{e(\psi) \mid \psi \in \Gamma\} \leq e(\varphi)$. 
\end{theorem}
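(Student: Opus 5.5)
The plan is to reduce the statement to the truth-preserving chain completeness of Theorem~\ref{chaincomplete}, adapting the strategy of \cite[Th.~2.12]{bou2009logics} as packaged in Proposition~\ref{axlessequal}. The first step is to show that, for finite $\Gamma=\{\gamma_1,\dots,\gamma_k\}$,
\[
\Gamma \vdash_{(\mathcal{PQN}^{\circ\lor}_{\mathsf{m}})^\leq} \varphi \quad\text{iff}\quad \vdash_{\mathcal{PQN}^{\circ\lor}_{\mathsf{m}}} \Gamma^\wedge \to \varphi ,
\]
with $\Gamma^\wedge=\top$ when $\Gamma=\emptyset$. After that, Theorem~\ref{chaincomplete} lets us read the right-hand side semantically: $e(\Gamma^\wedge\to\varphi)=1$ for every chain and every evaluation. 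On a chain $e(\Gamma^\wedge)=\min\{e(\psi)\mid\psi\in\Gamma\}$, and by residuation $e(\Gamma^\wedge\to\varphi)=1$ holds exactly when $e(\Gamma^\wedge)\le e(\varphi)$. This is precisely the stated semantic condition.

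For the left-to-right direction (soundness and the derivation-to-theorem translation), I will induct on the length of a derivation of $\varphi$ from $\Gamma$, showing $\vdash \Gamma^\wedge\to\psi$ for every formula $\psi$ in the derivation.
\begin{itemize}
\item If $\psi$ is an axiom, we get $\Gamma^\wedge\to\psi$ by weakening.
\item If $\psi\in\Gamma$, we use (Ax3)/(Ax4).
\item For (Adj), we use (Ax2).
\item For (r-MP), the premise $\varphi\to\psi$ is a theorem, so transitivity via (Ax1) suffices.
\item For (r-Cong$^\lor$), the premise is a theorem of $\mathcal{PQN}^{\circ\lor}_{\mathsf{m}}$. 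Applying (Cong$^\lor_\circ$) inside that logic therefore makes the conclusion a theorem too, and we weaken.
\end{itemize}

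For the converse, suppose $\vdash_{\mathcal{PQN}^{\circ\lor}_{\mathsf{m}}}\Gamma^\wedge\to\varphi$. From $\Gamma$ we obtain $\Gamma^\wedge$ by repeated (Adj), and one application of (r-MP) gives $\varphi$. It remains to check that every theorem of $\mathcal{PQN}^{\circ\lor}_{\mathsf{m}}$ is derivable in $(\mathcal{PQN}^{\circ\lor}_{\mathsf{m}})^\leq$ from the empty set, i.e.\ that the two logics share theorems. I will again induct on proofs:
\begin{itemize}
\item Axioms are shared.
\item Modus ponens with both premises theorems becomes (r-MP), whose side condition is exactly that $\psi\to\phi$ is a theorem.
\item (Cong$^\lor_\circ$) applied to a theorem is (r-Cong$^\lor$).
\item (Max$^\lor$) applied to a theorem is the delicate case, because the degree-preserving system as defined has no restricted version of it.
\end{itemize}

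I expect this last case to be the main obstacle. The natural fix, following the paper's remark after Definition~\ref{def63}, is to note that (Max) can be replaced by the equational axioms (i)--(iii) for $\max$-operators. Since these are axioms, they are shared, so instances of (Max$^\lor$) are derivable from axioms together with (Cong$^\lor_\circ$) and MP. I would argue this via chain completeness: on chains, $\hat\circ$ validates both the axiomatic version and (Max$^\lor$), so the logic with (Max$^\lor$) and the one with those axioms have the same theorems. If instead one reads the definition as implicitly including the restricted rule (r-Max$^\lor$) as in Proposition~\ref{axlessequal}, the case becomes immediate.
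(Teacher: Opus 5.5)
Your argument is correct and is essentially the paper's own. The paper just invokes Proposition~\ref{axlessequal}, i.e.\ the reduction $\Gamma\vdash^{\leq}\varphi$ iff $\vdash\Gamma^\wedge\to\varphi$, together with the chain completeness of Theorem~\ref{chaincomplete}; you unfold that reduction by hand.

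The case you flag as the main obstacle is not needed. The side condition of (r-MP) is theoremhood in $\mathcal{PQN}^{\circ\lor}_{\mathsf{m}}$ itself, so your converse is already finished after (Adj) and one application of (r-MP); for $\Gamma=\emptyset$ you start from $\top$, which is an instance of (Ax12). More generally, any theorem $\chi$ of $\mathcal{PQN}^{\circ\lor}_{\mathsf{m}}$, including any conclusion of (Max$^\lor$), is obtained from $\top$ by (r-MP) via $\vdash\top\to\chi$. So the detour through the equational form of (Max) is unnecessary, and those equations are not among the officially listed axioms anyway.
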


The proof directly follows from Proposition \ref{axlessequal}. 
Therefore, since $\varphi,  {\sim} \varphi \not\vdash_{(\mathcal{PQN}^{\circ\lor}_{\mathsf{m}})^\leq} \bot$, we finally get that the degree-preserving logic $(\mathcal{PQN}^{\circ\lor}_{\mathsf{m}})^\leq$ is a LFI.
\begin{corollary}  $(\mathcal{PQN}^{\circ\lor}_{\mathsf{m}})^\leq$ is a Logic of Formal Inconsistency. 
\end{corollary}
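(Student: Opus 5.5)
To show that $(\mathcal{PQN}^{\circ\lor}_{\mathsf{m}})^\leq$ is a Logic of Formal Inconsistency, I will check the three defining conditions from the introduction: the gentle explosion rule \eqref{eq:fgpe}, the failure of plain explosion $\varphi,\nnot\varphi\not\vdash\bot$, and the existence of a formula $\varphi$ satisfying \eqref{eq:fgpe2} and \eqref{eq:fgpe3}. All of them will be verified semantically, using the completeness theorem just proved. That theorem lets me decide $\Gamma\vdash\varphi$ for finite $\Gamma$ by checking $\min\{e(\psi)\mid\psi\in\Gamma\}\le e(\varphi)$ over all $\mathsf{PQN}^{\circ}_{\mathsf{m}}$-chains, and on every such chain $\circ$ is the operator $\hat\circ$ of \eqref{maxconschain}.

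For \eqref{eq:fgpe}, I need $e(\varphi)\wedge e(\nnot\varphi)\wedge e(\circ\varphi)\le 0$ on every chain. This holds in every $\mathsf{PQN}^\circ_{\mathsf m}$-algebra by the equation $x\land\nnot x\land\circ x=0$. Syntactically, one can use axiom (A1$_\circ$) together with (Adj) and (r-MP), since $\nnot\chi\to(\chi\to\bot)$ is a theorem.

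For non-explosion, I will take a generic chain ${\bf S}_n\in\mathbb{S}_\infty$ with fixpoint $f$ and set $e(p)=f$. Then $e(p)\wedge e(\nnot p)=f>0$, so $p,\nnot p\not\vdash\bot$. The remaining step, and the only delicate one, is choosing a witness for \eqref{eq:fgpe2} and \eqref{eq:fgpe3}: it must be consistent and non-contradictory at some valuation. A propositional variable $p$ works. Evaluating $p$ at $1$ in any chain gives $\hat\circ 1=1$ because $1\in E$, so $\min(e(p),e(\circ p))=1>0$, and hence $p,\circ p\not\vdash\bot$. Evaluating $p$ at $0$ gives $e(\nnot p)=1$ and $\hat\circ 0=1$, so $\nnot p,\circ p\not\vdash\bot$.

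The main obstacle is ensuring the semantic reading is legitimate, namely that the completeness theorem really applies to these finite premise sets and that $\hat\circ$ is the operator on the chains. Both facts are already established in the paper, so they only need to be cited. Combining the three facts yields the corollary.
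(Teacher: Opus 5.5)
Your proof is correct. Its overall shape matches the paper's: you use chain completeness of the degree-preserving companion and read off (non-)derivabilities semantically, with $\circ$ interpreted as $\hat\circ$. The real difference is in how the LFI witnesses are obtained.

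\begin{itemize}
\item For \eqref{eq:fgpe2} and \eqref{eq:fgpe3}, the paper constructs no countermodel. It notes that such non-derivabilities are preserved under weakening, and inherits them from the stronger, involutive logic of \cite[Proposition 5.6]{dIRL}.
\item The paper likewise only asserts $\varphi,\nnot\varphi\not\vdash\bot$, without giving a witness.
\item You give explicit countermodels instead. For non-explosion you use the fixpoint $f$ of some ${\bf S}_n$. For \eqref{eq:fgpe2}--\eqref{eq:fgpe3} you use the valuations $p\mapsto 1$ and $p\mapsto 0$, together with $\hat\circ 1=\hat\circ 0=1$.
\end{itemize}

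What each route buys:
\begin{itemize}
\item Your route is self-contained. It also shows that \eqref{eq:fgpe2} and \eqref{eq:fgpe3} are already witnessed in the two-element chain ${\bf 2}$ with $\circ\equiv 1$, the same algebra the paper uses in its DAT proof. So the external citation is not needed for these conditions.
\item The paper's route is shorter. It also highlights a general principle: these non-derivability conditions pass from any logic to all of its weakenings.
\end{itemize}

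One small correction to your syntactic aside on gentle explosion. The theorem that licenses (r-MP) is $\chi\to\bot$ itself, where $\chi=\varphi\land\nnot\varphi\land\circ\varphi$. This is literally axiom (A1$_\circ$), since $\nnot\chi$ abbreviates $\chi\to\bot$. So from $\chi$, obtained by (Adj), a single application of (r-MP) yields $\bot$. The tautology $\nnot\chi\to(\chi\to\bot)$ plays no role. Since unrestricted modus ponens is unavailable, it would not help anyway.
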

Recalling  conditions~\eqref{eq:fgpe2} and~\eqref{eq:fgpe3}  from the Introduction, the  definition of LFIs (e.g.~\cite[Def.~3.1]{dIRL}) requires
that there exists a formula $\varphi$ such that 
$\varphi, \circ \varphi \not \vdash \bot$ and 
$\nnot \varphi, \circ \varphi \not \vdash \bot$. Such conditions are clearly preserved by each weakening
of a given logic: thus $(\mathcal{PQN}^{\circ\lor}_{\mathsf{m}})^\leq$ inherits them from its involutive extension considered in~\cite[Proposition 5.6]{dIRL}.

\subsection{Non-falsity preserving companion of $\mathcal{PQN}$ with a $\sf max$-consistency operator}\label{sec:logCons2}

We have seen  that all the logics $\mathcal{PQN}^{\circ\lor}_{\mathsf{m}}, \mathcal{PQN}^{\circ\lor}_{\mathsf{Bm}}$ and $\mathcal{PQN}^{\circ\lor}_{\mathsf{mB}}$ 
coincide and are semilinear (Corollary \ref{same}).
In the following we thus restrict ourselves to just one of them, say $\mathcal{PQN}^{\circ\lor}_{\mathsf{m}}$. 

Here we are interested in the paraconsistent variant of $\Lc = \mathcal{PQN}^{\circ\lor}_{\mathsf{m}}$ that preserves the non-falsity. 

\begin{definition} Let $\Lc = \mathcal{PQN}^{\circ\lor}_{\mathsf{m}}$  and let $\Gamma \cup \{\varphi\}$ be a finite set of formulas. We define: 


- $\Gamma \models^{(0}_{\Lc} \varphi$ whenever, for any $\Lf$-chain {\bf A} and any $\bf A$-evaluation $e$, if $e(\psi) > 0$ for every $\psi \in \Gamma$, then $e(\varphi) > 0$.

\end{definition}

The crucial observation here is the following lemma that shows how to express valid inferences in the logic $\models^{(0}_{\Lc}$ in terms of inferences in the 1-preserving logic $\models_{\Lc}$.

\begin{lemma} \label{lemma0} For $\Lc = \mathcal{PQN}^{\circ\lor}_{\mathsf{m}}$, the following conditions are equivalent:
\begin{center}
$\varphi \models^{(0}_{\Lc} \psi $
\quad iff \quad
 $\neg \psi \models_{\Lc} \neg \varphi$
\quad iff \quad
 $\models_{\Lc} (\neg \psi)^2 \to \neg \varphi$.
 \end{center}
\end{lemma}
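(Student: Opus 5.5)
We prove the two equivalences separately. Both rest on Theorem~\ref{chaincomplete}: $\models_{\Lc}$ may be evaluated over $\mathsf{PQN}^\circ_{\sf m}$-chains, where $\circ$ is necessarily the operator $\hat\circ$ of \eqref{maxconschain}.

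\emph{First equivalence.} The key fact is that on a chain $\A$, for every $a\in A$,
\[
a>0 \quad\text{iff}\quad \nnot a<1 .
\]
If $a>0$ and $\nnot a=1$, then $a=a*1=a*\nnot a=0$, a contradiction. Conversely, if $a=0$ then $\nnot a=1$.

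Hence $\varphi\models^{(0}_{\Lc}\psi$ says: for every chain $\A$ and evaluation $e$, if $e(\nnot\varphi)<1$ then $e(\nnot\psi)<1$. Contrapositively, $e(\nnot\psi)=1$ implies $e(\nnot\varphi)=1$. This is exactly $\nnot\psi\models_{\Lc}\nnot\varphi$, evaluated over chains, which is legitimate by chain-completeness (Theorem~\ref{chaincomplete}). Thus the first two conditions are equivalent.

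\emph{Second equivalence.} Direction $\Leftarrow$ is immediate. If $e(\nnot\psi)=1$, then $e((\nnot\psi)^2)=1$, and $e((\nnot\psi)^2\to\nnot\varphi)=1$ forces $e(\nnot\varphi)=1$.

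For $\Rightarrow$ we need a local deduction theorem with exponent $2$. The plan is to show that $\nnot\psi\models_{\Lc}\nnot\varphi$ implies $\models_{\Lc}(\nnot\psi)^2\to\nnot\varphi$. Equivalently, on every chain and for every evaluation $e$,
\[
e(\nnot\psi)^2\le e(\nnot\varphi).
\]
Set $b=e(\nnot\psi)$. By 3-potency $b^2$ is idempotent, so $b^2\in\{0\}$ or $b^2$ generates a proper filter $F=\{x: x\ge b^2\}$. We consider the quotient $\A/F$ by the congruence determined by $F$. This quotient is again a chain in the variety, and in it the class of $b$ equals $1$. Applying the hypothesis $\nnot\psi\models_{\Lc}\nnot\varphi$ in the quotient gives $e(\nnot\varphi)\in F$, that is, $b^2\le e(\nnot\varphi)$. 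If $b^2=0$ the inequality is trivial.

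\emph{Main obstacle.} The difficulty is checking that the filter $F$ induces a congruence of the full algebra, including $\circ$. Since $\mathsf{PQN}^\circ_{\sf m}$ is not semilinear, this is not automatic; we must use that we work inside chains, where $\circ=\hat\circ$.

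The first route we would try is to avoid quotients altogether. The argument is a direct deduction-theorem one: the only rules of $\Lc$ are (Cong$^\lor_\circ$) and (Max$^\lor$). These are applicable to derivations from the hypothesis $\nnot\psi$ only through theorems. The $\vee$-closed form of these rules lets us carry the side formula $\nnot(\nnot\psi)^2$ through each rule application. Thus, by induction on derivations, if $\nnot\psi\vdash\chi$ then $\vdash\nnot(\nnot\psi)^2\vee\chi$.

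Finally, $\nnot(\nnot\psi)^2\vee\chi$ entails $(\nnot\psi)^2\to\chi$. This is the usual WNM fact: on chains, either $(\nnot\psi)^2=0$ or we may use idempotency of the square.

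Concretely, the rule (Max$^\lor$) with $\chi$ replaced by $\chi\vee\nnot(\nnot\psi)^2$ absorbs the extra disjunct. The same holds for (Cong$^\lor_\circ$). This closes the induction.
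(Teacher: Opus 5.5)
You prove the first equivalence correctly, by the same argument as the paper. In fact $\nnot a=1$ iff $a=0$ holds in every integral residuated lattice, not only on chains. The easy implication from $\models_{\Lc}(\nnot\psi)^2\to\nnot\varphi$ to $\nnot\psi\models_{\Lc}\nnot\varphi$ is also fine. The gap is the converse, and no argument can close it, because for $\Lc=\mathcal{PQN}^{\circ\lor}_{\mathsf{m}}$ that implication is false.

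Take $\psi:=\nnot p$ and $\varphi:=\nnot\circ\nnot\nnot p$. In any $\Lc$-algebra, $e(\nnot\nnot p)=1$ forces $e(\circ\nnot\nnot p)=\circ 1=1$, since $1\land\nnot 1=0$. Hence $e(\nnot\varphi)=\nnot\nnot 1=1$. So $\nnot\psi\models_{\Lc}\nnot\varphi$, and $\varphi\models^{(0}_{\Lc}\psi$ follows by your first equivalence.

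Now take the four-element $\mathsf{NM}$-chain $0<a<b<1$, where $\nnot a=b$, $\nnot b=a$ and $b*b=b$. Give it its unique max-consistency operator $\hat\circ$. Here $E=\{0,1\}$, so $\hat\circ b=0$. For $e(p)=b$ we get $e(\nnot\psi)=\nnot\nnot b=b$, hence $e((\nnot\psi)^2)=b$. We also get $e(\nnot\varphi)=\nnot\nnot\hat\circ b=0$. So $(\nnot\psi)^2\to\nnot\varphi$ takes the value $b\to 0=a<1$. Since $b^n=b$ for every $n$, $\Lc$ has no deduction theorem of the form $\alpha^n\to\beta$ at all.

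The paper's own proof has the same gap. The global deduction theorem it draws from $3$-potency holds for $\mathcal{PQN}$ and its axiomatic extensions. It does not hold for this expansion by $\circ$, which has the genuine rule (Cong$^\lor_\circ$).

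This is exactly where both of your routes break.

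\emph{Quotient route.} The obstacle you flagged is fatal, and restricting to chains does not help. In the example, $F={\uparrow}b^2=\{b,1\}$ identifies $b$ with $1$, while $\hat\circ b=0\neq 1=\hat\circ 1$ and $0\notin F$. So $F$ is not a congruence filter of $\langle\mathbf{A},\hat\circ\rangle$.

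\emph{Syntactic route.} It is not true that (Cong$^\lor_\circ$) and (Max$^\lor$) are applied only to theorems in derivations from $\nnot\psi$. That restriction defines the companions $(\cdot)^{\leq}$ and $\mathsf{nf}$-$(\cdot)$, not $\vdash_{\Lc}$ itself. From the hypothesis $\nnot\nnot p$ one derives $(\top\leftrightarrow\nnot\nnot p)\lor\bot$. Then (Cong$^\lor_\circ$) and the theorem $\circ\top$ give $\circ\nnot\nnot p$, which is precisely the step the counterexample exploits.

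Your induction also fails at its base case $\chi=\nnot\psi$: in the example, $\nnot(\nnot\psi)^2\lor\nnot\psi$ evaluates to $a\lor b=b<1$. A disjunctive deduction theorem of that shape needs a Boolean-valued guard such as $\nnot\Delta\alpha$, and $\nnot\alpha^2$ is not one.

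What survives is the first equivalence for arbitrary formulas, and the whole lemma for $\circ$-free $\varphi,\psi$. In that case $\Lc$ is conservative over $\mathcal{PQN}$, because every $\mathsf{PQN}$-chain expands by $\hat\circ$. There your quotient argument, or the usual $3$-potent deduction theorem, goes through.
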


\begin{proof} $\varphi \models^{(0}_{\Lc} \psi $ iff for any evaluation $e$, $e(\varphi) > 0$ implies $e(\psi) > 0$. Since in $\mathsf{PQN}$-chains it holds that $x > 0$ iff $\neg x < 1$, we have $\varphi \models^{(0}_{\Lc} \psi$ iff for any evaluation $e$, $e(\neg \varphi) < 1$ implies $e(\neg \psi) < 1$, that is, $e(\neg \psi) =1$ implies $e(\neg \varphi) = 1$, that is, iff  $\neg \psi \models_{\Lc} \neg \varphi$. Moreover, since the logic $\mathcal{PQN}$ is 3-potent (i.e.\ $\mathcal{PQN}$ proves  $\varphi * \varphi \to \varphi * (\varphi * \varphi)$), it has the following form of global deduction theorem: $\neg \psi \models_{\Lc} \neg \varphi$ iff $\models_{\Lc} (\neg \psi)^2 \to \neg \varphi$. 
\end{proof}

\begin{definition} The non-falsity variant of the logic logic $\Lc = \mathcal{PQN}^{\circ\lor}_{\mathsf{m}}$, denoted {\sf nf}-$\mathcal{PQN}^{\circ\lor}_{\mathsf{m}}$, is defined by the following set of axioms and rules:
\begin{itemize}
\item Axioms of  $\mathcal{PQN}^{\circ\lor}_{\mathsf{m}}$\vspace{0.1cm}
\item Restricted forms of the rules of  $\mathcal{PQN}^{\circ\lor}_{\mathsf{m}}$: \\ 

\begin{center}

\mbox{} (r-Cong$_\circ^\lor$) $\displaystyle{\frac{\vdash_{\Lc} (\varphi \leftrightarrow \psi) \lor \chi}{(\circ\varphi \to \circ\psi) \lor \chi}}$
\quad\quad\quad
(r-Max$^\lor$) $\displaystyle{\frac{ \vdash_{\Lc} \nnot (\varphi \land \nnot \varphi \land \psi) \lor \chi }{(\psi \to \circ \varphi) \lor \chi}}$ \mbox{}
\end{center}
\item Adjunction: $\displaystyle{\frac{\varphi, \quad \psi}{\varphi \land \psi}} $
\item Restricted nf-MP: $\displaystyle{\frac{\varphi, \quad  \vdash_{\Lc}  (\neg \psi)^2 \to \neg \varphi}{\psi}} $
\end{itemize}
\end{definition}

\begin{remark} The rule of Restricted Modus Ponens r-MP:  $\displaystyle{\frac{\varphi, \quad  \vdash_{\Lc}  \varphi \to \psi}{\psi}} $
 is derivable in the logic {\sf nf}-PQN, indeed from the rule Restricted nf-MP and adjunction. It follows from the observation that $\varphi \to \psi \vdash_{\Lc} (\neg \psi)^2 \to \neg \varphi$. 
\end{remark}

\begin{theorem}   For any finite set of formulas $\Gamma \cup \{\varphi\}$, $\Gamma \models^{(0}_{\Lc} \varphi$  iff $\Gamma \vdash_{{\sf nf}-\mathcal{PQN}^{\circ\lor}_{\mathsf{m}}} \varphi$. 
\end{theorem}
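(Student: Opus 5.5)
We prove soundness and completeness separately. Throughout, $\Lc = \mathcal{PQN}^{\circ\lor}_{\mathsf{m}}$. We use Theorem~\ref{chaincomplete}, which says that $\models_{\Lc}$ and $\vdash_{\Lc}$ coincide and can be evaluated on $\mathsf{PQN}^{\circ}_{\mathsf{m}}$-chains. The basic observation is the one used in Lemma~\ref{lemma0}: on a chain, $e(\chi)>0$ iff $e(\neg\chi)<1$. In the same way, on a chain $e(\varphi\land\psi)>0$ iff $e(\varphi)>0$ and $e(\psi)>0$.

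\emph{Soundness.} We check that every axiom and rule preserves non-falsity on chains.
\begin{enumerate}[(a)]
\item Axioms of $\Lc$ evaluate to $1>0$.
\item Adjunction is sound by the remark on $\land$ above.
\item Restricted nf-MP is sound by Lemma~\ref{lemma0}. If $\vdash_{\Lc}(\neg\psi)^2\to\neg\varphi$, then $\varphi\models^{(0}_{\Lc}\psi$.
\item The restricted rules (r-Cong$_\circ^\lor$) and (r-Max$^\lor$) have premisses that are theorems of $\Lc$. Their conclusions are then theorems of $\Lc$, since these rules are rules of $\Lc$. Hence the conclusions evaluate to $1$.
\end{enumerate}
An induction on derivations then gives $\Gamma\vdash\varphi \Rightarrow \Gamma\models^{(0}_{\Lc}\varphi$.

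\emph{Completeness.} Let $\Gamma=\{\gamma_1,\dots,\gamma_k\}$ be finite with $\Gamma\models^{(0}_{\Lc}\varphi$. If $\Gamma=\emptyset$, read $\Gamma^\wedge$ as $\top$; then every $e(\varphi)>0$, so $\models_{\Lc}\neg\varphi\to\bot$, i.e.\ $\vdash_{\Lc}(\neg\varphi)^2\to\neg\top$ after routine manipulation. Otherwise set $\gamma=\gamma_1\land\dots\land\gamma_k$. By the remark on $\land$ on chains, $\gamma\models^{(0}_{\Lc}\varphi$. Lemma~\ref{lemma0} then gives $\models_{\Lc}(\neg\varphi)^2\to\neg\gamma$. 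By Theorem~\ref{chaincomplete} this is $\vdash_{\Lc}(\neg\varphi)^2\to\neg\gamma$. The derivation in ${\sf nf}$-$\mathcal{PQN}^{\circ\lor}_{\mathsf{m}}$ is now immediate: derive $\gamma$ from $\Gamma$ by repeated Adjunction, and then apply Restricted nf-MP with the theorem just obtained to get $\varphi$. The empty case is handled the same way, starting from the axiom $\top$.

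\emph{Main obstacle.} The delicate step is the transfer from $\models_{\Lc}$ (validity on chains) to $\vdash_{\Lc}$ in the side conditions. This requires Theorem~\ref{chaincomplete}, which holds only because $\Lc$ uses the $\lor$-closed rules. A second point to check is that the semantics here quantifies over chains only; that restriction is exactly what makes the characterization ``$>0$ iff $\neg<1$'' and the meet-preservation of non-falsity valid. In a non-linear algebra both can fail, which would break Lemma~\ref{lemma0}.
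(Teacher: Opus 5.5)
Your proposal follows the paper's proof step for step: conjoin $\Gamma$ by Adjunction, invoke Lemma~\ref{lemma0}, pass to $\vdash_{\Lc}$ via Theorem~\ref{chaincomplete}, and finish with one Restricted nf-MP. The soundness half is fine, because there you only use the easy direction of Lemma~\ref{lemma0}. The completeness half, however, inherits a genuine gap from that lemma: the passage from $\neg\varphi\models_{\Lc}\neg\gamma$ to $\models_{\Lc}(\neg\varphi)^2\to\neg\gamma$. The ``deduction theorem from 3-potency'' holds for axiomatic extensions of $\mathcal{FL}_{ew}$. But $\Lc$ has the proper rules (Cong$_\circ^\lor$) and (Max$^\lor$), and these destroy it. For instance, $\neg p\models_{\Lc}\neg\neg\circ\neg p$ holds, since $e(\neg p)=1$ gives $e(\circ\neg p)=\circ 1=1$. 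Yet $(\neg p)^2\to\neg\neg\circ\neg p$ is not valid. Take the $\mathsf{PQN}$-chain $0<u<v<w<1$ with $\neg u=v$, $\neg v=u$, $\neg w=0$ (the WNM-chain of this weak negation; the Nelson equation is easily checked), and let $e(p)=u$. Then $(\neg p)^2=v*v=v$, while $\circ v=0$ because $v\wedge\neg v=u\neq 0$. So the formula takes the value $v\to 0=u<1$.

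This is not a local defect: the completeness direction actually fails for the system as defined. Let $\gamma=\neg\circ\neg p$. Then $\gamma\models^{(0}_{\Lc}p$, since $e(p)=0$ forces $e(\circ\neg p)=1$ and hence $e(\gamma)=0$. Now let $S$ be the set of formulas $\theta$ with $\vdash_{\Lc}(\neg\theta)^2\to\neg\gamma$.
\begin{itemize}
\item $S$ contains $\gamma$ and every theorem of $\Lc$, in particular the conclusions of the restricted rules.
\item $S$ is closed under Adjunction, because on chains $(\neg(\theta_1\wedge\theta_2))^2=(\neg\theta_1)^2\vee(\neg\theta_2)^2$.
\item $S$ is closed under Restricted nf-MP, because $(\neg\theta')^2\le\neg\theta$ gives $(\neg\theta')^2=(\neg\theta')^4\le(\neg\theta)^2\le\neg\gamma$ by 3-potency.
\end{itemize}
Hence everything derivable from $\gamma$ in ${\sf nf}$-$\mathcal{PQN}^{\circ\lor}_{\mathsf{m}}$ lies in $S$. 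By the computation above $p\notin S$, so $\gamma\not\vdash p$. With $\Gamma=\emptyset$, the formula $\neg\circ\neg p\to p$ gives the same failure.

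So the real obstacle is not the transfer through Theorem~\ref{chaincomplete}, which is harmless, but the missing deduction theorem. A correct statement requires changing the system. One option is to use $\neg\psi\vdash_{\Lc}\neg\varphi$ as the side condition of Restricted nf-MP; then your argument goes through using only the first equivalence of Lemma~\ref{lemma0}.

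Separately, your empty-$\Gamma$ step is wrong as written: $e(\varphi)>0$ for all $e$ does not yield $\models_{\Lc}\neg\varphi\to\bot$. Try $\varphi=p\vee\neg p$ with $e(p)=u$ in the chain above.
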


\begin{proof} Soundness is easy. Suppose $\Gamma \models^{(0}_{\Lc} \varphi$, and let $\psi = \bigwedge\{ \chi \mid \chi \in \Gamma\}$. By Lemma \ref{lemma0}, $\psi \models_{\Lc}^{(0} \varphi$ iff $\models_{\Lc} (\neg \varphi)^2 \to \neg \psi$. By completeness of $\mathcal{PQN}^{\circ\lor}_{\mathsf{m}}$, 
we have  $\vdash_{\mathcal{PQN}^{\circ\lor}_{\mathsf{m}}}  (\neg \varphi)^2 \to \neg \psi$. This means that there is a proof
$$ \langle \Psi_1, \ldots, \Psi_n =  (\neg \varphi)^2 \to \neg \psi \rangle$$
where each $\Psi_i$ is an axiom or has been derived from previous elements of the proof by an application of the inference rules. In order to get a proof of $\varphi$ from $\Gamma$  in {\sf nf}-$\mathcal{PQN}^{\circ\lor}_{\mathsf{m}}$, it roughly suffices to  consider two additional steps: 

$\Psi_0 = \psi$, by adjunction from $\Gamma$

$\Psi_{n+1} =  \varphi$, by application of the  Restricted nf-MP inference rule to $\Psi_0$ and $\Psi_n$. 

\noindent Note that the inference rules used in the proof in $\mathcal{PQN}^{\circ\lor}_{\mathsf{m}}$ apply always to theorems, so there can also be applied (as restricted rules) in {\sf nf}-$\mathcal{PQN}^{\circ\lor}_{\mathsf{m}}$. Therefore,  $\langle \Psi_0, \Psi_1, \ldots, \Psi_n, \Psi_{n+1} =  \varphi \rangle$ is indeed a proof of $\varphi$ from $\Gamma$ in {\sf nf}-$\mathcal{PQN}^{\circ\lor}_{\mathsf{m}}$, and hence  $\Gamma \vdash_{{\sf nf\mbox{-}}\mathcal{PQN}^{\circ\lor}_{\mathsf{m}}} \varphi$.
\end{proof}

\subsection{Propagation properties and recovering classical logic}\label{sec:propagation}

One of the remarkable features in some logics of formal inconsistency, as in da Costa's C-systems, is the so-called propagation property of the consistency connective $\circ$. Namely, the fact that consistency, or determinedness or well-behaviour, is propagated by means of the connectives, in the following sense: (i) if  $\varphi$ is consistent  it follows that $\nnot \varphi$ is consistent too, and (ii) if besides $\psi$ is also consistent then it follows $\varphi \# \psi$ is consistent as well for every binary connective $\#$. In this final subsection we examine the status of these propagation properties both for the duals of the undeterminedness operators $\bu^\delta$ and for the consistency operators $\circ$ we have considered in this work. 
We will make a semantical/algebraic analysis in the case of the semilinear LFU and LFI  logics we have defined in this section.  

We start with the LFU logic  $\mathcal{PQN}^{\bu}_{\mathsf{Bm}}$  and operators $\bu^\delta = \nnot \bu$, where $\bu$ is a {\sf min}-undertermindedness operator. We have seen that in a $\mathsf{PQN}$-chain $\bf A$ there is a unique such operator, defined as 
$$ \bu(x) = \left \{
\begin{array}{ll}
0, & \mbox{if } x \in \{0, 1\}  \\
1, & \mbox{otherwise, }
\end{array}
\right . $$
and therefore its corresponding dual operator is defined as
$$ \bu^\delta(x) = \left \{
\begin{array}{ll}
1, & \mbox{if } x \in \{0, 1\}  \\
0, & \mbox{otherwise. }
\end{array}
\right . $$
Hence, in the logic  $\mathcal{PQN}^{\bu}_{\mathsf{Bm}}$, a formula like $\bu^\delta \varphi$ is 1-true iff $\varphi$ is either fully true or fully false, i.e. it behaves as a Boolean formula. 
Now, since the truth-functions of the logic  $\mathcal{PQN}^{\bu}_{\mathsf{Bm}}$ over the top and bottom elements of any totally ordered algebra  coincide with the classical 2-valued Boolean truth-functions,  the following first propagation properties hold.  

\begin{proposition} The determinedness operator $\bu^\delta$ in the paracomplete logic $\mathcal{PQN}^{\bu}_{\mathsf{Bm}}$ satisfies the propagation property with respect to the rest of the connectives, that is, for any formulas $\varphi, \varphi_1, \varphi_2$, the following conditions hold: 
\begin{itemize}
\item[-] $ \bu^\delta \varphi \vdash_{\mathcal{PQN}^{\bu}_{\mathsf{Bm}}} \bu^\delta {\nnot}\varphi $
\item[-] $ \bu^\delta \varphi_1,  \bu^\delta \varphi_2  \vdash_{\mathcal{PQN}^{\bu}_{\mathsf{Bm}}} \bu^\delta (\varphi_1 \# \varphi_2),  \quad \mbox{ for } \# \in \{\land, \lor, \ast, \to\}$
\end{itemize}
\end{proposition}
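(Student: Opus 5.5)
The plan is to argue semantically, using the chain completeness of $\mathcal{PQN}^{\bu}_{\mathsf{Bm}}$. By Theorem~\ref{chaincompletebu} (recall $\mathcal{PQN}^{\bu\lor}_{\mathsf{Bm}} = \mathcal{PQN}^{\bu}_{\mathsf{Bm}}$), the logic is sound and complete with respect to its totally ordered algebras. It therefore suffices to check each consequence on an arbitrary $\mathsf{PQN}^{\bu}_{\mathsf{Bm}}$-chain $\langle \A, \hat\bu\rangle$ and an arbitrary evaluation $e$. Since the logic is truth-preserving, each claim amounts to: if the premises take value $1$, then so does the conclusion.

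On a chain, the operator is forced to be $\hat\bu$ as in \eqref{eqBMinChain}. Hence $e(\bu^\delta\varphi) = \nnot\hat\bu e(\varphi) = 1$ holds exactly when $e(\varphi)\in\{0,1\}$, and otherwise $e(\bu^\delta\varphi)=0$. So the premises $\bu^\delta\varphi_1$ and $\bu^\delta\varphi_2$ (or $\bu^\delta\varphi$) having value $1$ means precisely that $e(\varphi_1), e(\varphi_2) \in \{0,1\}$ (respectively $e(\varphi)\in\{0,1\}$). The task then reduces to showing that $\{0,1\}$ is closed under the operations of $\A$.

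For negation, $\nnot 0 = 0\to 0 = 1$ and $\nnot 1 = 1 \to 0 = 0$, both by integrality and residuation. For the binary connectives, $\{0,1\}$ is closed under $\land$ and $\lor$ because it is a subchain, and $\ast$ restricted to it is the Boolean conjunction since $1$ is the unit and $0$ is absorbing. For $\to$ we have $a\to b = 1$ iff $a\le b$, and $1\to 0 = 0$, which gives the classical table. So $\{0,1\}$ is the two-element Boolean subalgebra, and the conclusion $\bu^\delta(\varphi_1\#\varphi_2)$ (resp. $\bu^\delta\nnot\varphi$) evaluates to $1$.

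The only point that needs care is the appeal to completeness for consequences with premises, rather than just for theorems. I would use the strong chain completeness provided by semilinearity in the Cintula--Noguera sense: Theorem~\ref{chaincompletebu} is stated for the consequence relation, and the premise sets here are finite. Alternatively, since $\PQNbmin$ is semilinear (Theorem~\ref{thm:PQNminSemiLin}), the variety is generated by its chains as a quasivariety for these quasi-equations, and the equivalent algebraic semantics then yields the consequence directly.
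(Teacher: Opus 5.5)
Your proposal is correct and is essentially the paper's argument. Chain completeness reduces the claim to $\mathsf{PQN}^{\bu}_{\mathsf{Bm}}$-chains, where $\bu^\delta$ takes value $1$ exactly on $\{0,1\}$, and on $\{0,1\}$ the operations are the classical Boolean ones; the paper states exactly this and also notes that the result is a special case of \cite[Prop.~5.2]{CEG14}.
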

In fact, this result can be seen as a particular case of \cite[Prop. 5.2]{CEG14} since the
operator $\bu^{\delta}$ coincides with the consistency operator $\circ_{\min}$ considered in that reference in the context of LFIs defined on top of fuzzy logics endowed with a consistency operator.\footnote{Notice that, although the operator $\bu^\delta$ behaves like a consistency operator and is definable in the logic $\mathcal{PQN}^{\bu}_{\mathsf{Bm}}$, this logic is not an LFI for $\bu^\delta$ since, in particular, it is explosive, and thus, not paraconsistent.} 


In the case of the semilinear logic of formal inconsistency $(\mathcal{PQN}^{\circ\lor}_{\mathsf{m}})^\leq$, since it  is a degree-preserving logic, the conditions to be checked are whether the formulas
\begin{itemize}
\item[(i)] $ \circ \varphi \to \circ {\nnot}\varphi $
\item[(ii)] $ \circ \varphi_1 \land\circ \varphi_2  \to  \circ(\varphi_1 \# \varphi_2),  \quad \mbox{ for } \# \in \{\land, \lor, \ast, \to\}$
\end{itemize}
are theorems of $\mathcal{PQN}^{\circ\lor}_{\mathsf{m}}$. But again, taking into account that $\mathcal{PQN}^{\circ\lor}_{\mathsf{m}}$ is a particular case of the fuzzy logics  L$_\circ^{\max}$ considered in \cite{CEG14}, for L being a (truth-preserving) fuzzy logic not satisfying the axiom ``$\nnot (\varphi \land \nnot \varphi)$'', Proposition 5.2 in \cite{CEG14} proves that (i) and (ii) indeed hold.  

\begin{proposition}(c.f.  \cite[Prop. 5.2]{CEG14}) The consistency operator $\circ$ in the paraconsistent logic $(\mathcal{PQN}^{\circ\lor}_{\mathsf{m}})^\leq$ satisfies the propagation property with respect to the rest of connnectives, that is, the following conditions hold:  
\begin{itemize}
\item[(i)] $\vdash_{\mathcal{PQN}^{\circ\lor}_{\mathsf{m}}}  \circ \varphi \to \circ {\nnot}\varphi $
\item[(ii)] $\vdash_{\mathcal{PQN}^{\circ\lor}_{\mathsf{m}}}   \circ \varphi_1 \land\circ \varphi_2  \to  \circ(\varphi_1 \# \varphi_2),  \quad \mbox{ for } \# \in \{\land, \lor, \ast, \to\}$
\end{itemize}
\end{proposition}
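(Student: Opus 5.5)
The plan is to argue semantically and reduce everything to a finite check on chains. By Theorem~\ref{chaincomplete} (together with Corollary~\ref{same}), a formula is a theorem of $\mathcal{PQN}^{\circ\lor}_{\mathsf{m}}$ iff it takes value $1$ under every evaluation into every $\mathsf{PQN}$-chain endowed with its unique consistency operator $\hat\circ$ of~\eqref{maxconschain}. Since $\hat\circ$ only takes values in $\{0,1\}$, an implication $\alpha\to\beta$ with $\alpha,\beta$ built from $\hat\circ$ evaluates to $1$ iff whenever $e(\alpha)=1$ also $e(\beta)=1$. So (i) and (ii) reduce to two closure properties of the set $E(\A)$ of explosive elements of an arbitrary $\mathsf{PQN}$-chain $\A$:
\begin{enumerate}[(a)]
\item $a\in E(\A)$ implies $\nnot a\in E(\A)$;
\item $a,b\in E(\A)$ implies $a\# b\in E(\A)$ for $\#\in\{\land,\lor,\ast,\to\}$.
\end{enumerate}

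The first step is a concrete description of $E(\A)$ on a chain. Since $\A$ is totally ordered, $a\land\nnot a=\min(a,\nnot a)$, so $a\land \nnot a=0$ iff $a=0$ or $\nnot a=0$. Hence $E(\A)=\{0\}\cup N$, where $N=\{x: \nnot x=0\}$. Because $\nnot$ is order-reversing, $N$ is an up-set containing $1$.

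Then I verify (a) and (b) by cases.
\begin{enumerate}[(1)]
\item Negation. If $a=0$, then $\nnot a=1\in N$. If $a\in N$, then $\nnot a=0$. In both cases $\nnot a\in E(\A)$.
\item Meet and join. On a chain, $a\land b$ and $a\lor b$ each equal one of $a,b$, so they stay in $E(\A)$.
\item Product. If either factor is $0$, then $a\ast b=0$. Otherwise, since $\mathsf{PQN}$-chains are $\mathsf{WNM}$-chains, the (WNM) axiom gives $a\ast b\in\{0,a\land b\}$, and both options lie in $E(\A)$.
\item Implication. If $a=0$, then $a\to b=1$. If $b=0$, then $a\to b=\nnot a$, which is explosive by (1). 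If $a,b\in N$, then $a\to b\ge b$, so $a\to b\in N$ because $N$ is an up-set.
\end{enumerate}

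The only point needing care is the use of the $\mathsf{WNM}$ form of $\ast$ on chains, i.e.\ that the disjunction in the (WNM) axiom splits on a chain. This follows because $\A$ is totally ordered, so one of the two disjuncts equals $1$. Everything else is routine. Finally, I would note that this agrees with \cite[Prop.~5.2]{CEG14}, and the chain completeness already established makes the whole argument self-contained.
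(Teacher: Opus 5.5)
Your argument is correct, but it is not the route the paper takes. The paper gives no direct verification. It observes that $\mathcal{PQN}^{\circ\lor}_{\mathsf{m}}$ is an instance of the logics $\mathrm{L}^{\max}_\circ$ of \cite{CEG14}, built over a fuzzy logic L that does not prove $\nnot(\varphi\land\nnot\varphi)$, and simply inherits (i) and (ii) from \cite[Prop.~5.2]{CEG14}.

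You instead prove the result from scratch, using three ingredients:
\begin{itemize}
\item chain completeness (Theorem~\ref{chaincomplete});
\item uniqueness of the $\{0,1\}$-valued operator $\hat\circ$ on chains;
\item the description $E(\mathbf{A})=\{0\}\cup N$ with $N=\{x:\nnot x=0\}$ an up-set.
\end{itemize}
These reduce both claims to closure of $E(\mathbf{A})$ under the connectives. Every step checks out, including the reduction of $\hat\circ\alpha\to\hat\circ\beta=1$ to an implication between Boolean values.

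The two approaches trade off as follows. The citation is shorter and places the result inside the general theory of LFIs over MTL-based logics, but it relies on the reader accepting the identification with the framework of \cite{CEG14}. Your argument is self-contained. It is also essentially the case analysis the paper itself carries out later for the non-falsity companion (Proposition~\ref{prop-ball}), where ``$=1$'' is read as ``$>0$''.

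The appeal to (WNM) in the product case is unnecessary. For $a,b\in N$, residuation alone gives $\nnot(a\ast b)=a\to\nnot b=a\to 0=\nnot a=0$, so $a\ast b\in N$. Combined with $a\to b\geq b$, this shows your closure argument works verbatim on any MTL-chain, which is exactly the generality of \cite{CEG14}.
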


If $\mathcal{L}$ is an LFI (resp.,~LFU), another desirable property is to recover classical logic reasoning inside L by means of the consistency (resp.,~undeterminedness) operator.   
In general, if a recovery operator $\diamond$ enjoys the propagation property with respect to the classical connectives, say $\land, \lor, \nnot$, in a given LFI or LFU logic $\mathcal{L}$,   then $\mathcal{L}$ satisfies the so-called {\em Derivability Adjustment Theorem} (DAT) when for every finite set of formulas $\Gamma \cup \{\varphi\}$ in the language of classical propositional logic ($\mathcal{CPL}$)

(DAT) $\Gamma \vdash_{\mathcal{CPL}} \varphi$  iff $\Gamma \cup \{\diamond p_1,\dots, \diamond p_n\} \vdash_L \varphi$,

\noindent where $\{p_1,\ldots,p_n\}$ is the set of propositional variables occurring in $\Gamma \cup \{\varphi\}$.

It turns out that, due to the specific properties of the dual operator $\bu^\delta$ in $\mathsf{PQN}$-chains, the LFU logic $\mathcal{PQN}^{\bu}_{\mathsf{Bm}}$ does satisfy the DAT theorem. In fact $\bu^\delta$ acts as a double recovery operator since it recovers both the explosion principle and the excluded-middle principle.  The proof is analogous to that of \cite[Prop. 6.3]{CEG14} since the formula $\bu^\delta \varphi \to \varphi \lor \nnot \varphi$ is valid in $\mathcal{PQN}^{\bu}_{\mathsf{Bm}}$. 

\begin{proposition} \label{noDAT} $\mathcal{PQN}^{\bu}_{\mathsf{Bm}}$ satisfies the following DAT theorem: for any finite set of formulas $\Gamma \cup \{\varphi\}$ in the language of $\mathcal{CPL}$ and on variables $p_1,\ldots, p_n$, 
\begin{center}
$\Gamma \vdash_{\mathcal{CPL}} \varphi$ \quad iff \quad $\Gamma \cup \{\bu^\delta p_1,\dots, \bu^\delta p_n\} \vdash_{\mathcal{PQN}^{\bu}_{\mathsf{Bm}}} \varphi$.
\end{center}
\end{proposition}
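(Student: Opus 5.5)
We reduce everything to the chain semantics given by Theorem~\ref{chaincompletebu}, together with the fact that on every $\mathsf{PQN}$-chain $\mathbf{A}$ the undeterminedness operator is the unique $\hat\bu$ of \eqref{eqBMinChain}. Fix $\Gamma\cup\{\varphi\}$ in the $\mathcal{CPL}$ language $\{\land,\lor,\to,\nnot,\bot\}$ on variables $p_1,\dots,p_n$. We read $\nnot$ as $\cdot\to\bot$, and $\ast$ collapses to $\land$ on $\{0,1\}$.

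\emph{Right-to-left.} Suppose $\Gamma\cup\{\bu^\delta p_1,\dots,\bu^\delta p_n\}\vdash_{\mathcal{PQN}^{\bu}_{\mathsf{Bm}}}\varphi$, and let $v$ be a classical valuation with $v(\Gamma)=1$. Take any $\mathsf{PQN}$-chain, for instance the two-element Boolean algebra with $\hat\bu$ (which is $\hat\bu\equiv 0$ there). The evaluation $e=v$ gives $e(\bu^\delta p_i)=\nnot\hat\bu v(p_i)=1$ and $e(\Gamma)=1$. By soundness, $e(\varphi)=1$, i.e.\ $v(\varphi)=1$.

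\emph{Left-to-right.} Assume $\Gamma\vdash_{\mathcal{CPL}}\varphi$. By Theorem~\ref{chaincompletebu} (recall $\mathcal{PQN}^{\bu\lor}_{\mathsf{Bm}}=\mathcal{PQN}^{\bu}_{\mathsf{Bm}}$), it suffices to show the following: for every $\mathsf{PQN}$-chain $\langle\mathbf{A},\hat\bu\rangle$ and evaluation $e$ with $e(\bu^\delta p_i)=1$ for all $i$ and $e(\gamma)=1$ for $\gamma\in\Gamma$, we have $e(\varphi)=1$.

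\begin{enumerate}[(1)]
\item From $\nnot\hat\bu e(p_i)=1$ we get $\hat\bu e(p_i)=0$. By \eqref{eqBMinChain}, this means $e(p_i)\in\{0,1\}$. This is the step that encodes the formula $\bu^\delta\varphi\to\varphi\lor\nnot\varphi$ mentioned before the statement.
\item The set $\{0,1\}$ is closed under $\land,\lor,\ast,\to$ and $\nnot$ in any bounded integral residuated lattice. The operations restricted to it coincide with the Boolean truth tables, e.g.\ $1\to 0=\nnot 1=0$ and $0\to x=1$.
\item By induction on formulas, $e$ restricted to the $\mathcal{CPL}$ formulas in $p_1,\dots,p_n$ is a classical valuation $v$, with $v(\psi)=e(\psi)$.
\item Hence $v(\Gamma)=1$, so $v(\varphi)=1$ by classical consequence, and therefore $e(\varphi)=1$.
\end{enumerate}

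Note that with $\Gamma$ finite this is exactly truth-preserving semantic consequence over chains, which is the consequence relation the completeness theorem covers.

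The main point needing care is step~(1). It relies on the uniqueness of $\hat\bu$ on chains and on the fact that in a chain $a\lor\nnot a<1$ for $a\notin\{0,1\}$. Both are stated earlier, so the argument follows the pattern of \cite[Prop.~6.3]{CEG14} without new obstacles.
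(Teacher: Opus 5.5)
Your proof is correct and is essentially the argument the paper has in mind: the paper only defers to \cite[Prop.~6.3]{CEG14}, citing the validity of $\bu^\delta\varphi\to\varphi\lor\nnot\varphi$. Your reduction via chain completeness, which forces each $e(p_i)\in\{0,1\}$ so that $e$ acts as a classical valuation, together with the two-element algebra for the converse, is exactly that argument written out in full.
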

 
In contrast, the LFI logic 
$(\mathcal{PQN}^{\circ\lor}_{\mathsf{m}})^\leq$ does not satisfy the above form of the DAT theorem. It is easy to check that, while  $\varphi \lor \nnot \varphi$ is clearly a $\mathcal{CPL}$ theorem, $\varphi \lor \nnot \varphi$ does not follow from $\circ \varphi$ in the logic  $(\mathcal{PQN}^{\circ\lor}_{\mathsf{m}})^\leq$, or equivalently, $\circ \varphi \to (\varphi \lor \nnot \varphi)$ is not a theorem in $\mathcal{PQN}^{\circ\lor}_{\mathsf{m}}$. By completeness, it is enough to show that $\circ \varphi \to (\varphi \lor \nnot \varphi)$ is not a valid formula in all $\mathsf{PQN}^{\circ\lor}_{\mathsf{m}}$-chains. Indeed, as shown also in \cite{CEG14}, let $\bf A$ be a $\mathsf{PQN}^{\circ\lor}_{\mathsf{m}}$-chain such that $\{0,1\} \subsetneq E({\bf A})$, and let $e$ be an $\bf A$-evaluation such that, for some propositional variable $p$, $e(p) \in E({\bf A}) \setminus \{0,1\}$, i.e. $e(\nnot p) = 0$. Then $e(\circ p) = 1$, while $e(p \lor \nnot p) = e(p) < 1$. 

Finally let us analyse the behaviour of the non-falsity preserving LFI logic ${\sf nf}\mbox{-}\mathcal{PQN}^{\circ\lor}_{\mathsf{m}}$ with the respect to the propagation properties and the DAT theorem.

\begin{proposition} \label{prop-ball} The consistency operator $\circ$ in the paraconsistent logic ${\sf nf\mbox{-}}\mathcal{PQN}^{\circ\lor}_{\mathsf{m}}$ satisfies the propagation property with respect to the rest of connectives, that is, the following conditions hold:  
\begin{itemize}
\item[(i)] $ \circ \varphi  \vdash_{{\sf nf}-\mathcal{PQN}^{\circ\lor}_{\mathsf{m}}}  \circ {\nnot}\varphi $
\item[(ii)] $ \circ \varphi_1, \circ \varphi_2 \vdash_{{\sf nf}-\mathcal{PQN}^{\circ\lor}_{\mathsf{m}}}   \circ(\varphi_1 \# \varphi_2),  \quad \mbox{ for } \# \in \{\land, \lor, \ast, \to\}$
\end{itemize}
\end{proposition}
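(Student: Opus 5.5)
The plan is to reduce both items to the theorems already established for the $1$-preserving logic in the preceding proposition (cf.~\cite[Prop.~5.2]{CEG14}), and then to transfer them to ${\sf nf}\mbox{-}\mathcal{PQN}^{\circ\lor}_{\mathsf{m}}$ using the restricted rules available there. That proposition gives
$\vdash_{\mathcal{PQN}^{\circ\lor}_{\mathsf{m}}} \circ\varphi \to \circ\nnot\varphi$ and
$\vdash_{\mathcal{PQN}^{\circ\lor}_{\mathsf{m}}} \circ\varphi_1\land\circ\varphi_2 \to \circ(\varphi_1\#\varphi_2)$.
By the Remark following the definition of ${\sf nf}\mbox{-}\mathcal{PQN}^{\circ\lor}_{\mathsf{m}}$, the rule r-MP, which lets us pass from $\alpha$ to $\beta$ whenever $\vdash_{\Lc}\alpha\to\beta$, is derivable in the nf logic.

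For item (i), I would apply r-MP directly to the premise $\circ\varphi$. For item (ii), I would first use Adjunction to get $\circ\varphi_1\land\circ\varphi_2$ from the two premises, and then apply r-MP. This is the whole syntactic argument.

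Because the whole argument rests on the validity of those theorems, I would also double-check them semantically on chains, using the chain completeness of Theorem~\ref{chaincomplete}. On a $\mathsf{PQN}$-chain, $\hat\circ$ is given by \eqref{maxconschain}, so it suffices to show that $E(\A)$ is closed under $\nnot,\land,\lor,\ast,\to$. On a chain, $a\in E(\A)$ iff $a=0$ or $\nnot a=0$.
\begin{itemize}
\item Negation: if $a=0$ then $\nnot a=1\in E(\A)$. If $\nnot a=0$, then $\nnot a$ is $0$, which is explosive.
\item Meet and join: closure is trivial, since on a chain $\land$ and $\lor$ return one of their arguments.
\item Product: if either factor is $0$ the product is $0$. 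Otherwise $\nnot(a\ast b)=a\to\nnot b=a\to 0=\nnot a=0$.
\item Implication: if $a=0$ then $a\to b=1$. If $b=0$ then $a\to b=\nnot a\in E(\A)$ by the negation case. Otherwise $b\le a\to b$, so $\nnot(a\to b)\le\nnot b=0$.
\end{itemize}
Hence $\circ\varphi\to\circ\nnot\varphi$ and $\circ\varphi_1\land\circ\varphi_2\to\circ(\varphi_1\#\varphi_2)$ take value $1$ in every chain.

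Alternatively, one could argue via the completeness theorem for $\models^{(0}_{\Lc}$. Since $\hat\circ$ is $\{0,1\}$-valued, $e(\circ\varphi)>0$ forces $e(\varphi)\in E(\A)$, and the closure properties above then give $e(\circ(\cdot))=1>0$ for the conclusion.

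The only step that needs genuine care is the implication case, specifically the subcase $b=0$, which reduces to closure of $E(\A)$ under $\nnot$. I expect no real obstacle beyond this.
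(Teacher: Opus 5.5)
Your proof is correct, but your main argument takes a different route from the paper's. The paper argues purely semantically through the completeness theorem for the non-falsity companion. It assumes $e(\circ\varphi)>0$ (and $e(\circ\varphi_2)>0$) on a chain and uses that $\hat\circ$ is $\{0,1\}$-valued. It then checks by hand, for $\nnot$, $\ast$ and $\to$, that the value of the compound formula stays explosive, describing $E(\mathbf{A})$ as $\{0\}\cup[a,1]$ as on the standard chains. Your alternative paragraph is essentially this argument.

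Your primary argument is syntactic instead. You transfer the truth-preserving theorems $\circ\varphi\to\circ\nnot\varphi$ and $\circ\varphi_1\land\circ\varphi_2\to\circ(\varphi_1\#\varphi_2)$ into $\mathsf{nf}$-$\mathcal{PQN}^{\circ\lor}_{\mathsf{m}}$ using Adjunction and the derivable rule r-MP. This makes the result independent of the completeness theorem for the nf companion. You only need chain completeness of the truth-preserving logic (Theorem~\ref{chaincomplete}) to certify the two theorems. It also shows that propagation in the nf logic is inherited from the very theorems that give propagation for the degree-preserving companion: any companion with Adjunction and r-MP gets it for free.

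Your semantic check is also cleaner and more general than the paper's case analysis. The characterization $a\in E(\mathbf{A})$ iff $a=0$ or $\nnot a=0$ holds on every $\mathsf{PQN}$-chain, not just standard ones where $E(\mathbf{A})$ has the form $\{0\}\cup[a,1]$. You also treat the zero subcases and the lattice connectives explicitly, which the paper passes over quickly. The paper's route, in exchange, is self-contained at the semantic level and does not depend on the previously established truth-preserving theorems.
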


\begin{proof} Thanks to the completeness of ${\sf nf}$-$\mathcal{PQN}^{\circ\lor}_{\mathsf{m}}$, we can prove both properties semantically. 

To prove (i) amounts to checking that, for any evaluation $e$ on a $\PQNA^{\circ\lor}_{\mathsf{m}}$-chain $\bf A$, $e(\circ \varphi) > 0$ then  $e(\circ \neg\varphi) > 0$ as well, or equivalently, due to the particular form of $\circ$ in a chain, if $e(\circ \varphi) = 1$, then  $e(\circ \neg\varphi) =1$. If $e(\circ \varphi) = 1$, it means that $e(\varphi) \in E({\bf A}) = [a, 1]$. Then, if $e(\varphi) = 0$, then $e(\neg\varphi) = e(\circ \varphi) =  e(\circ \neg \varphi) = 1$. Finally, if $e(\varphi) \in E(A) \setminus \{0\}$, then $e(\neg \varphi) = 0$, $e(\circ \varphi) = 1$ and hence  $e(\circ\neg \varphi) = 1$ as well. This ends the proof of (i). 

As for (ii), we prove for the case $\# = \ast$. Again, for a given evaluation $e$ on a $\mathsf{PQN}^{\circ\lor}_{\mathsf{m}}$-chain $A$, assume $e(\circ \varphi_1) > 0$ and $e(\circ \varphi_2) > 0$, that is,  $e(\circ \varphi_1) = e(\circ \varphi_2) = 1$, and let us check that $e(\circ(\varphi_1 \ast \varphi_2)) = 1$ as well. By definition,  $e(\varphi_1 \ast \varphi_2) = \min(e(\varphi_1), e(\varphi_2))$ if $e(\varphi_1) > \neg e(\varphi_2)$, $e(\varphi_1 \ast \varphi_2) = 0$ otherwise. Hence, by observing that, trivially, $\min(e(\varphi_1), e(\varphi_2)) \geq a$ whenever $e(\varphi) \geq a$ and $e(\psi) \geq a$,  we are done. 
Finally, as for the case $\# =\; \to$, assume $e(\circ \varphi_1) > 0$ and $e(\circ \varphi_2) > 0$ (hence $e( \varphi_1),  e(\varphi_2) \geq a$) and let us check that $e(\circ(\varphi_1 \to \varphi_2)) > 0$. Notice that in any  $\mathsf{PQN}$-chain, $x \to y = 1$ if $x \leq y$ and $x \to y = \max(\nnot x, y)$ otherwise. Then, it is easy to check that if $x, y \geq a$ and $x > y$, then  $\max(\nnot x, y) = \max(0, y) = y \geq a$, hence $x \to y \geq a$ in any case, and thus $\circ(x \to y) = 1$. 
\end{proof}

For our final result, we need a preliminary lemma that shows that every theorem of $\mathcal{CPL}$ is indeed a valid formula in the non-falsity preserving variant of $\mathcal{PQN}$.

\begin{lemma} \label{CPL} Let $\varphi$ a formula in the language of $\mathcal{CPL}$. Then if $\varphi$ is a theorem of $\mathcal{CPL}$, then $\models^{(0}_{\mathcal{PQN}}\varphi$.
\end{lemma}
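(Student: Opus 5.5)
The plan is to reduce the statement to a purely semantic claim about $\mathsf{PQN}$-chains and then argue by induction on formulas, in the style of the completeness of $\mathrm{LP}$/Kleene logic for classical tautologies.

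Since $\mathsf{PQN}$ is semilinear, $\models^{(0}_{\mathcal{PQN}}\varphi$ means: for every $\mathsf{PQN}$-chain $\A$ and every $\A$-evaluation $e$, $e(\varphi)>0$. I take the $\mathcal{CPL}$-language to consist of $\land,\lor,\to,\nnot$ (and $\bot$), with $\ast$ read as $\land$ if needed.

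Fix a chain $\A$. I would sort its elements into three kinds. The first kind is $0$. The second is the \emph{top-like} elements, namely $E(\A)\setminus\{0\}$; these are exactly the $x>0$ with $\nnot x=0$. The third is the \emph{middle} elements $M=\{x : x\wedge\nnot x>0\}$.

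Define $h:A\to\{0,\tfrac12,1\}$ by sending $0\mapsto 0$, top-like elements to $1$, and $M$ to $\tfrac12$. Note that $h(x)>0$ iff $x>0$. So it suffices to show that $h(e(\varphi))\neq 0$ for every classical tautology $\varphi$.

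The key step is an information-monotonicity lemma. Call a classical valuation $v$ a \emph{refinement} of $h\circ e$ if $v(p)=h(e(p))$ whenever $h(e(p))\in\{0,1\}$. The lemma says that for every formula $\psi$, either $h(e(\psi))=\tfrac12$, or $h(e(\psi))=v(\psi)$ for every refinement $v$. Given this, the conclusion follows at once for a tautology $\varphi$: either $h(e(\varphi))=\tfrac12>0$, or it equals $v(\varphi)=1$.

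The lemma will be proved by induction on $\psi$, checking each connective with the following facts about chains.

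For $\nnot$: $\nnot 0=1$ and $\nnot$ of a top-like element is $0$. If $x\in M$, then $\nnot x>0$ and $\nnot x\wedge\nnot\nnot x\geq \nnot x\wedge x>0$, so $\nnot x\in M$.

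For $\land$ and $\lor$ (which are $\min$ and $\max$): if both arguments are top-like, or one is $0$ (for $\land$), the result is Boolean-correct. Otherwise the result lies in one of the two arguments' classes, and a mixed case with some $\tfrac12$ yields either $\tfrac12$ or the forced classical value. For instance, $\min(x,y)$ with $x\in M$ and $y$ top-like lies in $M$ or equals $x$.

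The implication is where I expect the real work. I would use the description of $\to$ on $\mathsf{PQN}$-chains: $x\to y=1$ if $x\le y$, and $x\to y=\max(\nnot x,y)$ otherwise. The cases to verify are as follows. If $x=0$ or $y$ is top-like, then $x\to y\geq y$ or $x\to y=1$, which gives value $1$ consistently (here I need that an element above a top-like element is top-like, since $\nnot$ is antitone). If $x$ is top-like and $y=0$, then $x\to y=\nnot x=0$. If some argument is in $M$, the value is $\geq\min$ of positive things, so it is never forced to $0$. One must then check that it is either in $M$ or equal to what every refinement demands. The delicate subcase is $x\in M$, $y=0$: here $x\to 0=\nnot x\in M$, which is fine. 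Symmetric bookkeeping handles $x$ top-like and $y\in M$, where $x\to y\in\{1\}\cup\{y\}$ and a refinement may set $y$ to $0$ or $1$. If some clause fails in this check, my fallback is to weaken the lemma to the one-sided statement ``$h(e(\psi))=0$ implies $v(\psi)=0$ for some refinement $v$'' together with its dual, which is all that is needed for tautologies.
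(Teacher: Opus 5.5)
Your information-monotonicity lemma fails in the one implication subcase you did not list: $x=e(\chi_1)$ and $y=e(\chi_2)$ both in $M$ with $x\le y$. Then $x\to y=1$, so $h(e(\chi_1\to\chi_2))=1$. But $h(x)=h(y)=\tfrac12$ leaves a refinement free to set $v(\chi_1)=1$ and $v(\chi_2)=0$ (take $\chi_1=p$, $\chi_2=q$).

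The other subcases do go through once you use that $0<M<T$, which follows from the antitonicity of $\nnot$. This one, however, is fatal. Your one-sided fallback cannot rescue it either, because the statement itself is false when $\to$ is the residuated implication. Take any $\mathsf{PQN}$-chain whose negation has a fixpoint $f$. Examples are the three-element $\mathsf{NM}$-chain $0<f<1$ (three-valued \L ukasiewicz), or any generic $\mathbf{S}_n$, by condition (i). Consider the classical tautology $\varphi=(p\to\nnot p)\to\nnot(\nnot p\to p)$, which is classically $\nnot p\to\nnot p$. With $e(p)=f$ we get $e(p\to\nnot p)=e(\nnot p\to p)=f\to f=1$. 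Hence $e(\varphi)=1\to\nnot 1=1\to 0=0$.

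The paper's proof breaks at the same point. It maps the chain to $\{0,1\}$ by thresholding at $a=\max\{x:\nnot x\ge x\}$. Here $a$ is the fixpoint $f$ itself, so $h(e(\nnot p))=h(f)=0\neq 1$. The claimed identity $e'(\psi)=h(e(\psi))$ therefore already fails for $\psi=\nnot p$.

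Two restricted versions do hold. For formulas in $\land,\lor,\nnot$ only, your $h$ is a homomorphism into the three-element Kleene algebra, and your LP-style argument is correct. Unlike the paper's two-valued threshold, it is not disturbed by a fixpoint. On chains whose negation has no fixpoint, the map sending $x$ to $1$ iff $\nnot x<x$ is a homomorphism onto $\mathbf{2}$ for all of $\land,\lor,\nnot,\to$, which yields the lemma for that class. Some restriction of this kind is needed before the statement can be proved.
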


\begin{proof}
Suppose $\not\models^{(0}_{\mathcal{PQN}} \varphi$. Then there exists an evaluation on a $\mathsf{PQN}$-chain $\bf A$ on $[0, 1]$ such that $e(\varphi) = 0$. Let $a \in (0, 1)$ such that $a = \max\{ x \in [0, 1] \mid  \neg x \geq x\}$. Then we define a mapping $h: [0, 1] \to \{0, 1\}$  as follows: 
$$h(x) = \left \{
\begin{array}{ll}
1, & \mbox{if } x > a \\
0, & \mbox{otherwise},
\end{array}
\right .$$
and let the Boolean evaluation $e'$ be defined as $e'(p) = h(e(p))$ for every propositional variable $p$. It is not difficult to check that, for any formula $\psi$ in the language of $\mathcal{CPL}$, $e'(\psi) = h(e(\psi))$. Therefore, $e'(\varphi) = 0$, and hence $\varphi$ is not a theorem of $\mathcal{CPL}$.
\end{proof}

Now we can show that, while $(\mathcal{PQN}^{\circ\lor}_{\mathsf{m}})^\leq$ does not satisfy a DAT theorem of the form of Prop. \ref{noDAT}, ${\sf nf\mbox{-}}\mathcal{PQN}^{\circ\lor}_{\mathsf{m}}$ does. 

\begin{proposition} ${\sf nf-}\mathcal{PQN}^{\circ\lor}_{\mathsf{m}}$ satisfies the following DAT theorem: for any finite set of formulas $\Gamma \cup \{\varphi\}$ in the language of $\mathcal{CPL}$ and on variables $p_1,\ldots, p_n$, 
\begin{center}
$\Gamma \vdash_{\mathcal{CPL}} \varphi$ \quad iff \quad $\Gamma \cup \{\circ p_1,\dots, \circ p_n\} \vdash_{{\sf nf-}\mathcal{PQN}^{\circ\lor}_{\mathsf{m}}} \varphi$.
\end{center}
\end{proposition}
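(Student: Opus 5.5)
We argue semantically throughout, using the completeness theorem for ${\sf nf}\mbox{-}\mathcal{PQN}^{\circ\lor}_{\mathsf{m}}$ with respect to $\models^{(0}_{\Lc}$ over $\mathsf{PQN}^{\circ}_{\mathsf{m}}$-chains, where $\circ$ is necessarily the operator $\hat\circ$ of~\eqref{maxconschain}. The key fact is that on a chain $e(\circ p)>0$ iff $e(\circ p)=1$ iff $e(p)\in E(\Al)$. So adding the premises $\circ p_1,\dots,\circ p_n$ amounts to restricting to evaluations that send every variable into $E(\Al)$.

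For the right-to-left direction we argue by contraposition. Suppose $\Gamma \not\vdash_{\mathcal{CPL}} \varphi$, and take a Boolean valuation $v$ with $v(\gamma)=1$ for all $\gamma\in\Gamma$ and $v(\varphi)=0$. We regard $v$ as an evaluation $e$ on any $\mathsf{PQN}^{\circ}_{\mathsf{m}}$-chain, say some ${\bf S}_n$, with $e(p_i)=v(p_i)\in\{0,1\}$. Since $\{0,1\}$ is a subalgebra of every chain and the operations on it are classical, we get $e(\psi)=v(\psi)$ for every $\mathcal{CPL}$-formula $\psi$. Also $e(\circ p_i)=1$, because $0,1\in E(\Al)$. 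All premises therefore take value $>0$ while $e(\varphi)=0$, so $\varphi$ is not an nf-consequence.

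For the left-to-right direction, assume $\Gamma\vdash_{\mathcal{CPL}}\varphi$. Let $e$ be an evaluation on a chain $\Al$ with $e(\gamma)>0$ for $\gamma\in\Gamma$ and $e(\circ p_i)>0$, so $e(p_i)\in E(\Al)$. We adapt the homomorphism $h$ from the proof of Lemma~\ref{CPL}. On a chain, $E(\Al)\setminus\{0\}$ is an up-set on which $\nnot$ vanishes; write it as $\{x: x\ge a\}$ or $\{x : x>a\}$. Define $h(x)=1$ if $x\in E(\Al)\setminus\{0\}$ and $h(x)=0$ if $x=0$. On $E(\Al)$ this sends $x$ to $1$ exactly when $x>0$.

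The main step is to show that $E(\Al)$ is closed under $\land,\lor,\ast,\to,\nnot$ and that $h$ restricted to it is a homomorphism onto the two-element Boolean algebra. Closure under $\ast$ and $\to$ is exactly the computation already carried out in Proposition~\ref{prop-ball}, and closure under $\nnot$ is item (i) there. Closure under $\land,\lor$ is immediate on a chain. The homomorphism property is checked case by case. For $x,y\in E(\Al)\setminus\{0\}$ we have $x>\nnot y=0$, so $x\ast y=\min(x,y)\neq0$. Likewise $x\to y$ is either $1$ or $\max(\nnot x,y)=y>0$, and $\nnot x=0$. When one argument is $0$ the operations are classical.

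Hence $v:=h\circ e$ is a Boolean valuation satisfying $v(\psi)=h(e(\psi))$ for all $\mathcal{CPL}$-formulas $\psi$ in $p_1,\dots,p_n$. Since $e(\gamma)\in E(\Al)\setminus\{0\}$, we get $v(\gamma)=1$, so $v(\varphi)=1$, and therefore $e(\varphi)>0$. Completeness then yields the claimed derivability.

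The main obstacle I expect is verifying the homomorphism property in the mixed cases of $\to$ and $\ast$. This also requires checking that the threshold description of $E(\Al)$ is valid on arbitrary chains and not only on the standard ${\bf S}_n$. If this causes trouble, I would first try arguing only over the generic chains ${\bf S}_n$, using the single-chain completeness corollary.
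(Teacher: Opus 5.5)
Your proof is correct. For the right-to-left direction it is essentially the paper's argument: the paper uses the two-element algebra $\mathbf{2}$ with $\circ 0=\circ 1=1$, where you use the subalgebra $\{0,1\}$ of a chain.

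For the left-to-right direction you take a genuinely different route. The paper transforms a $\mathcal{CPL}$ derivation step by step:
\begin{enumerate}
\item the $\mathcal{CPL}$ axioms are non-falsity valid by Lemma~\ref{CPL};
\item the guards $\circ\psi$ for every formula $\psi$ over $p_1,\dots,p_n$ come from the propagation properties of Proposition~\ref{prop-ball};
\item each use of modus ponens is replaced by the guarded rule $(MP_\circ)$, whose soundness is checked on chains.
\end{enumerate}
You instead do everything in one semantic induction. The premises $\circ p_i$ confine the evaluation to $E(\mathbf{A})$, and the map $E(\mathbf{A})\to\mathbf{2}$ sending nonzero elements to $1$ is a homomorphism. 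This single fact encodes both propagation (closure of $E(\mathbf{A})$) and the classical behaviour of the connectives.

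Each route has its advantages:
\begin{itemize}
\item Yours is shorter and shows the mechanism transparently. It bypasses Lemma~\ref{CPL}, whose proof works on a standard chain with a threshold $a=\max\{x:\nnot x\ge x\}$. It also covers explicitly the cases with a $0$ argument, which the paper's proof of Proposition~\ref{prop-ball} passes over.
\item The paper's route shows how classical derivations are simulated by a guarded modus ponens, in the usual style of DATs for LFIs.
\end{itemize}
Neither is more syntactic in the end. Derivability of $(MP_\circ)$ and propagation are themselves obtained through completeness, so both proofs rest on the completeness theorem for ${\sf nf}\mbox{-}\mathcal{PQN}^{\circ\lor}_{\mathsf{m}}$, exactly as yours does.

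The obstacle you anticipate does not arise. You never need a threshold description of $E(\mathbf{A})$, and you need not even use the $\mathsf{WNM}$ formulas. On any residuated chain, $E(\mathbf{A})=U\cup\{0\}$ with $U=\{x:\nnot x=0\}$, and $U$ is an up-set because $\nnot$ is antitone. For $x,y\in U$ one has $\nnot(x\ast y)=x\to\nnot y=\nnot x=0$ and $x\to y\ge y$, so $U$ is closed under $\ast$ and $\to$. The remaining cases, where an argument is $0$, are classical.
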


\begin{proof} In the following, since $\Gamma$ is finite, we can assume it consists of a single formula $\gamma$. 
\vspace{.1cm}

($\Rightarrow$) Assume $\gamma \vdash_{\mathcal{CPL}} \varphi$. The idea is to transform a proof of $\varphi$ from $\gamma$ in $\mathcal{CPL}$ to a proof of $\varphi$ from $\gamma \cup \{\circ p_1,\dots, \circ p_n\}$ in $\mathcal{PQN}^{\circ\lor}_{\mathsf{m}}$. To do that we first show that the following ``guarded'' form of Modus Ponens is sound (and thus derivable) in $\mathcal{PQN}^{\circ\lor}_{\mathsf{m}}$: 
$$(MP_\circ) \quad \frac{\circ \psi,\circ \chi, \psi, \psi \to \chi}{\chi}
$$
We have to prove that, for any evaluation $e$ on a totally ordered $\PQNA^{\circ}_{\mathsf{m}}$-algebra $\bf A$, if $e(\circ \psi) > 0, e(\circ \chi) > 0, e(\psi) > 0$, and $e(\psi\to \chi) > 0$, then necessarily $e(\chi) > 0$. Suppose $E({\bf A}) = \{0\} \cup [a, 1]$. Then the previous conditions amount to assuming $e(\psi) \geq a$, $e(\chi) \in E({\bf A})$ and $e(\psi \to \chi) > 0$.  
By way of contradiction, assume $e(\chi) = 0$. Then, since $e(\psi) \geq a$, and thus $e(\psi) > e(\chi)$, we would have $e(\psi \to \chi) \;{=}
\max(\neg e(\psi), e(\chi))= \max(0, 0) = 0$, a contradiction. Thus, it must be $e(\chi)> 0$. (Here we have used the fact that in every $\mathsf{WNM}$-chain, and hence in $\mathsf{PQN}$-chain a fortiori, the implication comes defined as $x \to y = 1$ if $x \leq y$ and $x \to y = \max(\neg x, y)$ otherwise, where $\sim$ is the negation of the $\mathsf{WNM}$-chain, see e.g. \cite{esteva2001monoidal}.)

Now, let $\langle \Pi_0 = \gamma,\Pi_1, \ldots, \Pi_m = \varphi\rangle$ be a proof in $\mathcal{CPL}$, where each $\Pi_i$ with $i > 0$ is either an axiom of $\mathcal{CPL}$ or has been deduced by Modus Ponens from two previous $\Pi_j, \Pi_k$ with $j, k < i$. By observing that 

(i) by the above Lemma \ref{CPL}, all axioms of $\mathcal{CPL}$ are theorems of $\mathcal{PQN}^{\circ\lor}_{\mathsf{m}}$ (i.e. they are always evaluated to a positive value), and

(ii) by the above Prop.\ \ref{prop-ball}, if $q_1, \ldots, q_r$ are the propositional variables appearing in a formula $\psi$, then it follows that 
$\circ q_1,\dots, \circ q_r  \vdash_{{\sf nf-}\mathcal{PQN}^{\circ\lor}_{\mathsf{m}}} \psi$, 

\noindent we can then easily transform the proof $\langle \Pi_0 = \gamma,\Pi_1, \ldots, \Pi_m = \varphi\rangle$ in $\mathcal{CPL}$ into a proof in ${\sf nf\mbox{-}}\mathcal{PQN}^{\circ\lor}_{\mathsf{m}}$ by replacing the applications of Modus Ponens by applications of the guarded Modus Pones  rule $(MP_\circ)$. 




\vspace{.1cm}

($\Leftarrow$)
Suppose  $\Gamma \not\vdash_{\mathcal{CPL}} \varphi$. Then there exists a {\bf 2}-evaluation $e: Var \to \{0, 1\}$ such that $e(\Gamma) = 1$ and $e(\varphi) = 0$. If we define $\circ: {\bf 2} \to {\bf 2}$ such that $\circ(1) = \circ(0) = 1$, then ${\bf 2}^\circ = (\bf 2, \circ)$ is a $\mathsf{PQN}^{\circ\lor}_{\mathsf{m}}$-algebra. Extend $e$ to $e_\circ$ on ${\bf 2}^\circ$, then we have $e_\circ (\circ(p_i)) = 1$, $e_\circ(\Gamma) = 1$ and $e_\circ(\varphi) = 0$, hence $\Gamma \cup \{\circ p_1,\dots, \circ p_n\} \not\vdash_{{\sf nf\mbox{-}}\mathcal{PQN}^{\circ\lor}_{\mathsf{m}}} \varphi$.
\end{proof}

\section{Conclusions and future work} \label{conclusions}

In the present paper,
extending 
results reported in~\cite{FiRi24,ConigBook},
we have investigated
recovery operators within 
quasi-Nelson logic, with a particular focus on the prelinear case. 
We have seen that the main 
algebraic and logical results on residuated lattice-based LFIs/LFUs from~\cite{dIRL} can be recovered
in the quasi-Nelson setting, and our approach  led us beyond~\cite{dIRL}
in that we have been dispensed 
the involutivity assumption.
However, the present study
does not apply to general (distributive) residuated lattices,
for we did add to the picture the powerful Nelson equation, which entails
(distributivity and) three-potency. The project of a truly general extension of~\cite{dIRL} will therefore  have to be addressed in future research. 
Namely, we plan to investigate what properties  are still ensured in 
the algebraic setting of  general non-involutive residuated lattices,
for both the operators $\bullet$ and $\circ$, and how they relate to their Boolean and (almost) involutive special cases.
 As a somewhat more narrow but nevertheless potentially interesting direction to pursue in the prelinear 
(hence, distributive) setting, let us mention the study of recovery operators
on weak nilpotent minimum algebras, of which $\mathsf{PQN}$-algebras constitute a proper subvariety (see~\cite{FlaRi}).

Another direction concerns the interaction, on a logical level, between the consistency and the undeterminedness operators,
a topic which in the present paper we have only cursorily considered
in Section~\ref{sec:both}, and then only in the algebraic setting. 

The attentive reader may recall a few other problems, mainly of technical interest, that we have left open in
the preceding sections:
for example,
the question of the identity of the white operators 
$\ce$ and $\cb$ outside the prelinear setting (cf.~Remark~\ref{rem:cecb}). These will be addressed in a forthcoming publication. 
 {
 As mentioned in the introduction,
one may consider defining properties for 
 the recovery operators that are alternative
 to those considered both in the present paper
 and in~\cite{FiRi24,ConigBook}. For instance, for 
an LFU obtained by adding a determinedness operator $\bu$, instead of the principle~\eqref{eq:fgpd}
considered in the Introduction ($\vdash \phi \lor \nnot \phi \lor \bu \phi$), one can alternatively require 
the principles: 
$\bu^\delta \phi \vdash \phi \lor \nnot \phi$ or
$\vdash \bu^\delta \phi \to (\phi \lor \nnot \phi)$. 
Similarly, for a consistency operator $\circ$, instead of the Boolean condition $\vdash \circ \varphi \lor \nnot \circ \varphi$ one could consider the related but weaker postulate $ \vdash \nnot(\circ \varphi \land \nnot \circ \varphi)$. Interestingly, following this path leads to consistency operators that are fully dual to the undeterminedness operators considered in the present paper.}

Lastly, let us recall that one of the prominent features of quasi-Nelson algebras is that they can be represented
via the so-called \emph{twist} construction, which has proved to be remarkably helpful in the study of these non-classical algebras. The main representation theorem states that every quasi-Nelson algebra $\Al$ can be identified with a pair
$(\Al[H], F)$ where $\Al[H]$ is a Heyting algebra 
enriched with a modal operator (known as a \emph{nucleus}) --
in the prelinear case $\Al[H]$ is a G\"odel algebra, i.e.~a prelinear Heyting algebra
--
and $F$ is a lattice filter on $\Al[H]$. 
This correspondence can be phrased as an equivalence of categories, which in turn can be used for
a topological study quasi-Nelson algebras that builds on the well-known Esakia duality 
for (modal) Heyting algebras (see~\cite{rivieccio2021duality}).
Building on ideas from~\cite{dIRL,esteva2021}, the twist representation has been extended in~\cite{FiRi24,ConigBook}
to account for quasi-Nelson algebras expanded with recovery operators. In the simplest case, 
one thus obtains a correspondence between 
quasi-Nelson algebras 
endowed with an undeterminedness operator and pairs $(\Al[H], F)$ as before, where  now $\Al[H]$ is a modal Heyting algebra 
expanded with a unary operation that realizes the dual pseudocomplement. 
As in the preceding case, an equivalence of categories is readily obtained, and we
 anticipate that,
by suitably extending Esakia duality to deal with pseudocomplement operations,
one may build a duality for quasi-Nelson algebras with undeterminedness operators. 
The case of consistency operators is technically a little more involved, but conceptually very similar, and can be dealt with in a similar way. Such a study will be pursued in a future publication~\cite{JaRi26}.


\begin{thebibliography}{10}
\providecommand{\url}[1]{\texttt{#1}}
\providecommand{\urlprefix}{URL }
\providecommand{\doi}[1]{https://doi.org/#1}

\bibitem{Baaz96}
Baaz, M. Infinite-valued G\"odel logics with 0-1-projections and relativizations. In G\"ODEL 96, LNL 6, P. H\'ajek., ed, pp. 23-33. Springer, 1996.

\bibitem{bou2009logics}
Bou, F., Esteva, F., Font, J.M., Gil, {\`A}.J., Godo, L., Torrens, A.,
  Verd{\'u}, V.: Logics preserving degrees of truth from varieties of
  residuated lattices. Journal of Logic and Computation  \textbf{19}(6),
  1031--1069 (2009)

\bibitem{BuGa}
Busaniche, M., Galatos, N., Marcos, M.A.: Twist structures and {N}elson
  conuclei. Studia Logica  \textbf{110},  949--987 (2022)

\bibitem{BuRi}
Busaniche, M., Rivieccio, U.: {N}elson conuclei and nuclei: the twist
  construction beyond involutivity. Studia Logica \textbf{112}, 1123--1161 (2024)

\bibitem{CaCo16}
Carnielli, W.A., Coniglio, M.E.: {\em Paraconsistent Logic: Consistency, Contradiction and Negation}. Logic, Epistemology, and the Unity of Science Series. New York: Springer, 2016.


\bibitem{CaCoMa} Carnielli, W.A., Coniglio, M.E., Marcos, J.:  {Logics of Formal Inconsistency}. In {\em Handbook of Philosophical Logic}, 2nd edn, Vol. 14, D. Gabbay and F. Guenthner(eds.), pp. 1-93. Springer, 2007.

\bibitem{CaCoRo} 
Carnielli W.A., Coniglio M.E., Rodrigues A.: (2020). Recovery operators, paraconsistency and duality. Logic Journal of the IGPL 28 (5): 624-656 (2020)


\bibitem{Taxo}
Carnielli, W.A., Marcos, J.: A taxonomy of {C}-systems. In: {\em Paraconsistency: The Logical Way to the Inconsistent}, W.A. Carnielli, M. Coniglio and I.M D'ottaviano (eds.), 
pp. 1-94, CRC Press (2002)



\bibitem{cegm}
 Cignoli, R., Esteva, F., Godo, L., and   Montagna, F.: On a class of left-continuous t-norms.   Fuzzy Sets Syst. 131(3): 283--296, 2002.


\bibitem{Cintula-Noguera-book} Cintula, P., and Noguera, C.: {\em Logic and Implication. An Introduction to the General Algebraic Study of Non-classical Logics}, vol. 57 of Trends in Logic, Springer, 2021. 

 \bibitem{ErtolaSC}
 Ertola, R., Esteva, F., Flaminio, T., Godo, L., Noguera, C. Paraconsistency properties in degree-preserving fuzzy logics. {\em Soft Computing} 19(3): 531-546, 2015.

\bibitem{CEG14}
 Coniglio, M.E., Esteva, F., and Godo, L.: Logics of formal inconsistency arising from systems of fuzzy logic. Log. J. IGPL 22(6): 880--904, 2014.

\bibitem{EsDo80} Esteva, F., and Domingo, X.: Sobre negociaciones fuertes y d\'ebiles en $[0, 1]$, Stochastica IV (2) (1980) 141-166.

\bibitem{dIRL}
Esteva, F., Figallo-Orellano, A., Flaminio, T., Godo, L.: Logics of formal
  inconsistency based on distributive involutive residuated lattices. Journal
  of Logic and Computation  \textbf{31}(5),  1226--1265 (2021)

\bibitem{esteva2021}
Esteva, F., Figallo-Orellano, A., Flaminio, T., Godo, L.: Some categorical
  equivalences for {N}elson algebras with consistency operators. In: 19th World
  Congress of the International Fuzzy Systems Association (IFSA), 12th
  Conference of the European Society for Fuzzy Logic and Technology (EUSFLAT)
  420--426. Atlantis Press (2021)

\bibitem{IPMU24}  Esteva, F., Gispert, J., Godo, L.: On the paraconsistent companions of involutive fuzzy logics that preserve non-falsity. In: Marie-Jeanne Lesot et al. (Eds.), Information Processing and Management of Uncertainty in Knowledge-Based Systems, Proceedings of the 20th International Conference (IPMU 2024). Springer.

\bibitem{esteva2001monoidal}
Esteva, F., Godo, L.: Monoidal t-norm based logic: towards a logic for
  left-continuous t-norms. Fuzzy sets and systems  \textbf{124}(3),  271--288
  (2001)

\bibitem{FlaRi}
Flaminio, T., Rivieccio, U.: Prelinearity in (quasi-){N}elson logic. Fuzzy Sets
  and Systems  \textbf{445},  66--89 (2022).
  
  \bibitem{ConigBook}
  Flaminio, T., Godo, L., Rivieccio, U.: Recovery Operators in Quasi-Nelson Logic. To appear.

\bibitem{Font-GTV} Font, J.M., Gil, A.,  Torrens, A. and Verd\'u, V.: {On the infinite-valued \L ukasiewicz logic that preserves degrees of truth}. {\em Archive for Mathematical Logic}, 45, 839--868 (2006).


\bibitem{GaJiKoOn07}
Galatos, N., Jipsen, P., Kowalski, T., Ono, H.: {\em Residuated Lattices: an
  algebraic glimpse at substructural logics}, Studies in Logic and the
  Foundations of Mathematics, vol.~151. Elsevier, Amsterdam (2007)

\bibitem{GEGC}  Gispert, J., Esteva, F.,  Godo, L.,  and  Coniglio, M.E.: On Nilpotent Minimum logics defined by lattice filters and their paraconsistent non-falsity preserving companions. Logic Journal of the IGPL. Forthcoming

\bibitem{GrMMR24} Greati, V.,  Marcelino, S., Marcos, J., Rivieccio, U.: Adding an Implication to Logics of Perfect Paradefinite Algebras. Mathematical Structures in Computer Science  \textbf{34}: 1138-1183 (2024) DOI:10.1017/S0960129524000227.

\bibitem{Hoh-95} H\"ohle, U.: {Commutative, residuated $\ell$-monoids}, In {\em Non-classical Logics and their Applications to Fuzzy Subsets}, U. H\"ohle and E. P. Klement (eds.), pp. 53--106. Kluwer Academic Publishers, 1995.
  
\bibitem{Horcik} Hor\v{c}\'ik, R.: Algebraic Semantics: Semilinear FL-algebras. Chapter 4 in  {\em Handbook of Mathematical Fuzzy Logic-Volume 1}, vol. 37 of Studies in Logic, Mathematical Logic and Foundations, P. Cintula, P. H\'ajek and C. Noguera (eds.), pp. 283-354, London, 2011.

\bibitem{JaRi26} Jansana, R., Rivieccio, U.: Quasi-{N}elson algebras and recovery operators:
a two-sorted duality. Submitted.





\bibitem{Ma05}
Marcos, J.: Nearly every normal modal logic is paranormal. Logique et Analyse  \textbf{48},
  189/192: 279--300 (2005)


\bibitem{thiago2021negation}
Nascimento, T., Rivieccio, U.: Negation and implication in quasi-{N}elson
  logic. Logical Investigations  \textbf{27}(1),  107--123 (2021)

\bibitem{Nels49}
{N}elson, D.: Constructible falsity. Journal of Symbolic Logic  \textbf{14},
  16--26 (1949)

\bibitem{RiSp19}
{Rivieccio}, U., {Spinks}, M.: Quasi-{N}elson algebras. Electronic Notes in
  Theoretical Computer Science  \textbf{344},  169--188 (2019)

\bibitem{RiSp20}
Rivieccio, U., Spinks, M.: {Q}uasi-{N}elson; or, non-involutive {N}elson
  algebras. In: D.~Fazio, A.~Ledda, F.P. (eds.) {\em Algebraic Perspectives on
  Substructural Logics}, Trends in Logic, vol.~55, pp. 133--168. Springer (2020)

\bibitem{riviecciotwonegs}
Rivieccio, U.: Fragments of quasi-{N}elson: {T}wo negations. Journal of Applied
  Logic  \textbf{7},  499--559 (2020)

\bibitem{rivieccio2021fragments}
Rivieccio, U.: Fragments of quasi-{N}elson: The algebraizable core. Logic
  Journal of the IGPL  (2021)

\bibitem{QN4}
Rivieccio, U.: Quasi-{N}4-lattices. Soft Computing  \textbf{26}(6),  2671--2688
  (2022)

\bibitem{FiRi24} 
Rivieccio, U., Figallo, A.: (In)consistency operators on quasi-Nelson algebras. In G.~Metcalfe, T.~Studer, Ruy~de Queiroz (Eds.), Logic, Language, Information, and Computation, 30th International Workshop, WoLLIC 2024 Bern, Switzerland, June 10-13, 2024 Proceedings, pp. 175-192, 2024.

  \bibitem{rfn}
Rivieccio, U., Flaminio, T., Nascimento, T.: On the representation of (weak) nilpotent minimum algebras.  2020 IEEE International Conference on Fuzzy Systems (FUZZ-IEEE), Glasgow, United Kingdom, 2020, pp. 1--8.

\bibitem{rivieccio2021duality}
Rivieccio, U., Jung, A.: A duality for two-sorted lattices. Soft Computing
  \textbf{25}(2),  851--868 (2021)

\bibitem{Sanka}
Sankappanavar, H.: Heyting algebras with dual pseudocomplementation. Pacific
  Journal of Mathematics  \textbf{117}(2) (1985)

\end{thebibliography}
\end{document}